\numberwithin{equation}{section}
\newcommand{\FF}{\mathcal{F}}
\newcommand{\LL}{\mathcal{L}}
\newcommand{\OO}{\mathcal{O}}
\renewcommand{\SS}{\mathcal{S}}
\newcommand{\C}{\mathbb{C}}
\newcommand{\N}{\mathbb{N}}
\renewcommand{\P}{\mathbb{P}}
\newcommand{\R}{\mathbb{R}}
\newcommand{\Z}{\mathbb{Z}}
\newcommand{\dd}{\,{\rm d}}
\newcommand{\D}{{\rm d}}
\newcommand{\e}{\operatorname{e}}
\renewcommand{\div}{\operatorname{div}}
\newcommand{\curl}{\operatorname{curl}}
\newcommand{\meas}{\operatorname{meas}}
\renewcommand{\:}{\thinspace :}
\newcommand{\loc}{\mathrm{loc}}
\newcommand{\ul}{\mathrm{ul}}
\newcommand{\bu}{\mathrm{bu}}
\newcommand{\1}{\mathbf{1}}
\newcommand{\BMO}{\mathrm{BMO}}
\newcommand{\half}{{\textstyle\frac12}}
\newcommand{\rhoR}{\rho_{\scriptscriptstyle R}}
\newcommand{\rhoRt}{\rho_{\scriptscriptstyle R(t)}}
\newtheorem{theorem}{Theorem}[section]
\newtheorem{definition}[theorem]{Definition}
\newtheorem{proposition}[theorem]{Proposition}
\newtheorem{lemma}[theorem]{Lemma}
\newtheorem{corollary}[theorem]{Corollary}
\theoremstyle{definition}
\newtheorem{remark}[theorem]{Remark}
\newcommand{\QED}{\mbox{}\hfill$\Box$}
\numberwithin{equation}{section}
\begin{document}

\title{Infinite energy solutions of the two-dimensional \\ 
Navier-Stokes equations}

\author{{\bf Thierry Gallay} \\[1mm] 
Universit\'e de Grenoble I et CNRS\\
Institut Fourier, UMR 5582\\
100 rue des maths, B.P. 74\\
38402 Saint-Martin-d'H\`eres, France\\
{\tt Thierry.Gallay@ujf-grenoble.fr}}
\date{}

\maketitle

\begin{abstract}
These notes are based on a series of lectures delivered by the author
at the University of Toulouse in February 2014.  They are entirely
devoted to the initial value problem and the long-time behavior of
solutions for the two-dimensional incompressible Navier-Stokes
equations, in the particular case where the domain occupied by the
fluid is the whole plane~$\R^2$ and the velocity field is only assumed
to be bounded. In this context, local well-posedness is not difficult
to establish \cite{GIM}, and a priori estimates on the vorticity
distribution imply that all solutions are global and grow at most
exponentially in time \cite{GMS,ST}. Moreover, as was recently shown by
S.~Zelik, localized energy estimates can be used to obtain a much
better control on the uniformly local energy norm of the velocity
field \cite{Ze}.  The aim of these notes is to present, in an
explanatory and self-contained way, a simplified and optimized version 
of Zelik's argument which, in combination with a new formulation of the
Biot-Savart law for bounded vorticities, allows one to show that the
$L^\infty$ norm of the velocity field grows at most linearly in
time. The results do not rely on the viscous dissipation, and remain
therefore valid for the so-called ``Serfati solutions'' of the
two-dimensional Euler equations \cite{AKLN}. Finally, a recent work 
by S.~Slijep\v{c}evi\'{c} and the author shows that all solutions 
remain uniformly bounded in the viscous case if the velocity field and 
the pressure are periodic in one space direction \cite{GS1,GS2}. 
\end{abstract}

\tableofcontents

\section{Introduction}
\label{s1}

The aim of these notes is to present in a unified and rather
self-contained way a set of recent results by various authors which
give some valuable insight into the dynamics of the incompressible
Navier-Stokes equations in large or unbounded two-dimensional
domains. We must immediately point out that the restriction to the
two-dimensional case is more a technical necessity than a deliberate
choice: all questions that are discussed below would be equally
important and considerably more challenging for three-dimensional
fluids, but in the present state of affairs we simply do not know how
to address them mathematically. It should be mentioned, however, that
there exist situations where a two-dimensional approximation is
undoubtedly relevant for real fluids. This is the case, for instance, 
when the aspect ratio of the domain containing the fluid is very large, 
so that the motion in one space direction can be neglected under
certain conditions. Large-scale oceanic motion is a typical example 
that is good to keep in mind, although in that particular case a
realistic model should take into account additional effects such
as the Coriolis force, the wind forcing at the free surface, the 
topography of the bottom, or the energy dissipation in boundary 
layers. 

If the Navier-Stokes equations are considered in a smooth
two-dimensional domain $\Omega \subset \R^2$, with no-slip boundary
conditions, it is well known that there exists a unique global
solution in the energy space $L^2(\Omega)$ if the initial data have
finite kinetic energy. This fundamental result was first established
by J.~Leray in the particular situation where $\Omega$ is the whole
plane $\R^2$ \cite{Le1}. Bounded domains were also considered by
Leray, who proved local well-posedness in that case as well as global
well-posedness for small initial data \cite{Le2}. The restriction on
the size of the data was completely removed later \cite{La}, and the
existence proof was subsequently written in a nice functional-analytic
setting \cite{FK} which is applicable to essentially arbitrary domains
with smooth boundary, including for example exterior domains
\cite{KO}.  If no exterior force is exerted on the fluid, the kinetic
energy is a nonincreasing function of time that converges to zero as
$t \to \infty$, see \cite{Ma}. The rate of convergence is exponential
if $\Omega$ is bounded, due to the boundary conditions, and in the
unbounded case it depends on the localization properties of the
initial data \cite{Sch,Wi}. To conclude this brief survey, we also
mention that infinite-energy solutions can be considered in unbounded
two-dimensional domains, and may exhibit nontrivial long-time
asymptotics. For instance, in the whole plane $\R^2$, solutions with
integrable vorticity distribution but nonzero total circulation have
infinite kinetic energy, and converge toward nontrivial self-similar
solutions as $t \to \infty$ \cite{GW}.

The results mentioned above, and many others that were omitted, may
sometimes lead to the hasty conclusion that ``everything is known''
about the dynamics of the two-dimensional Navier-Stokes equations.
This is of course deeply incorrect, and a more careful thinking
reveals that even simple and natural questions still lack a
satisfactory answer. Here is a typical example, which motivates some
of the questions investigated in the present notes. Consider the free
evolution of a viscous incompressible fluid in a bounded
two-dimensional domain $\Omega\subset\R^2$, with no-slip boundary
conditions. We are interested in the situation where the domain is
very large compared to the length scale given by the kinematic
viscosity and the typical size of the velocity; in other words, the
{\em Reynolds number} of the flow is very high. If $D \subset \Omega$
is a small subdomain located far from the boundary $\partial\Omega$,
we are interested in estimating the kinetic energy of the fluid in the
observation domain $D$ at a given time $t$. That energy is certainly
smaller than the total kinetic energy of the fluid at time $t$, which
in turn is less than the same quantity at initial time, but such an
estimate is ridiculously non optimal. When sailing the ocean, nobody
expects that the total energy of the sea, or a substantial fraction of
it, could suddenly get concentrated in a small neighborhood of the
boat, and we certainly do not suggest this mechanism as a possible
explanation for the formation of rogue waves!  It is intuitively clear
that the energy in the subdomain $D$ at time $t$ should be essentially
independent of the size of the domain $\Omega$ and of the total
kinetic energy of the fluid; instead it should be possible to estimate
that quantity in terms of the size of $D$ and the initial energy {\em
  density} only, but to the author's knowledge no such result has been
established so far. In a more mathematical language, we are lacking
{\em uniformly local energy estimates} for the fluid velocity that
would hold uniformly in time and depend only on the initial energy
density. Such estimates would tell us how the energy can be
redistributed in the system, due to advection and diffusion, until it
is dissipated by the viscosity.

Since the questions we have just mentioned are independent of the 
size of the fluid domain and of the exact nature of the boundary
conditions, it seems reasonable to attack them first in the idealized 
situation where the fluid fills the whole plane $\R^2$ and the 
velocity field is merely bounded. We thus consider the Navier-Stokes 
equations
\begin{equation}\label{NSeq}
  \partial_t u + (u\cdot \nabla)u \,=\, \nu\Delta u -\frac{1}{\rho}
  \nabla \pi~, \qquad \div u \,=\, 0~,
\end{equation}
where the vector field $u(x,t) \in \R^2$ is the velocity of the fluid
at point $x \in \R^2$ and time $t \in \R_+$, and the scalar field
$\pi(x,t) \in \R$ is the pressure in the fluid at the same point. The
physical parameters in \eqref{NSeq} are the kinematic viscosity $\nu >
0$ and the fluid density $\rho > 0$, which are both assumed to be
constant. To eliminate the fluid density from \eqref{NSeq}, we
introduce the new function $p = \pi/\rho$, which we still call
(somewhat incorrectly) the ``pressure'' in the fluid. Many authors
also eliminate the kinematic viscosity by an appropriate rescaling,
but dimensionality checks then become more cumbersome, so we prefer 
keeping the parameter $\nu$. 

The first equation in \eqref{NSeq} corresponds to Newton's equation
for a fluid particle moving under the action of the pressure gradient
$-\nabla p$ and the internal friction $\nu\Delta u$, whereas the
relation $\div u = 0$ is the mathematical formulation of the
incompressibility of the fluid. The nonlinear advection term in
\eqref{NSeq} is due to the definition of the velocity field in the
Eulerian representation, which implies that the acceleration of a fluid
particle located at point $x \in \R^2$ it not $\partial_t u(x,t)$ but
$\partial_t u(x,t) +(u(x,t)\cdot\nabla)u(x,t)$. No evolution equation
for the pressure is needed, because $p$ can be expressed as a
nonlinear and nonlocal function of the velocity field $u$ by solving
the elliptic equation
\begin{equation}\label{pressure}
  -\Delta p \,=\, \div((u\cdot\nabla)u)~, 
\end{equation}
which is obtained by taking the divergence with respect to $x$ of the
first equation in \eqref{NSeq}. Note that \eqref{pressure} only
determines the pressure up to a harmonic function in $\R^2$, but if
the velocity field is bounded and divergence free one can show that
\eqref{pressure} has solution $p \in \BMO(\R^2)$ which is unique up to
an irrelevant additive constant. This is the canonical choice of the
pressure, which will always be made, albeit tacitly, in what follows.
Here $\BMO(\R^2)$ denotes the space of functions of bounded mean
oscillation in $\R^2$, see Section~\ref{ss2.2} below for a brief
presentation.  The interested reader should consult the monographs
\cite{CF,Li,MB,Te} for a careful derivation and a detailed discussion of
the model \eqref{NSeq}.

In most mathematical studies of the Navier-Stokes equations 
\eqref{NSeq}, it is assumed that the total (kinetic) energy of 
the fluid is finite\:
\begin{equation}\label{Edef}
  E(t) \,:=\, \frac{1}{2}\int_{\R^2} |u(x,t)|^2\dd x 
  \,<\, \infty~. 
\end{equation}
Strictly speaking the physical energy is $\rho E(t)$, but we use
definition \eqref{Edef} in agreement with our previous choice of
eliminating the density parameter $\rho$. It is important to realize
that $E(t)$ is a {\em Lyapunov function} for the flow of \eqref{NSeq},
because a formal calculation shows that
\begin{equation}\label{Eprime}
  \frac{\D }{\D t}E(t) \,=\, -\nu\int_{\R^2} |\nabla 
  u(x,t)|^2\dd x \,\le\, 0~.
\end{equation}
As a consequence, we have
\begin{equation}\label{EE}
  E(t) + \nu\int_0^t \!\int_{\R^2} |\nabla u(x,s)|^2\dd x \dd s
  \,=\, E(0)~, \qquad t \ge 0~.
\end{equation}
The energy equality \eqref{EE} plays a crucial role in Leray's 
construction of global solutions to the Navier-Stokes equations
in $\R^2$ \cite{Le1}. 

As was already mentioned, we consider in these notes the more general
situation where the velocity field is only assumed to be bounded. To
avoid inessential technical problems related to continuity at initial
time, we assume that $u$ belongs to the Banach space
\begin{equation}\label{Xdef}
  X \,=\, C_\bu(\R^2)^2 \,=\, \Bigl\{u : \R^2 \to \R^2 \,\Big|\,
  u \hbox{ is bounded and uniformly continuous}\Bigr\}~, 
\end{equation}
equipped with the uniform norm. If $u \in X$, the energy \eqref{Edef}
is infinite in general, but we can still consider the energy density
$e = \frac12 |u|^2$, which satisfies the following local version of
\eqref{Eprime}\:
\begin{equation}\label{EEloc}
  \partial_t e + \div\Bigl((p + e)u\Bigr) \,=\, 
  \nu \Delta e - \nu |\nabla u|^2~, \qquad x \in \R^2~, \quad
  t \ge 0~.
\end{equation}
Another important quantity is the vorticity $\omega = \curl u
= \partial_1 u_2 - \partial_2 u_1$, which evolves according 
to the simple advection-diffusion equation
\begin{equation}\label{omeq}
  \partial_t \omega + u\cdot\nabla \omega \,=\, \nu\Delta \omega~. 
\end{equation}
If the velocity field is bounded, one can apply the parabolic maximum
principle to \eqref{omeq} and prove that all $L^p$ norms of $\omega$
are Lyapunov functions for the flow of \eqref{omeq}. The case $p =
\infty$ is especially relevant for us, because if we assume that the
initial velocity $u_0$ belongs to $X$, standard parabolic smoothing
estimates imply that, for any positive time, the derivative $\nabla u$
is a bounded function on $\R^2$, see \eqref{smoothT} below. The
vorticity bound $\|\omega(\cdot,t)\|_{L^\infty}$ is therefore a finite
and nonincreasing function of time for all $t > 0$. We also mention
that, since $\div u = 0$ and $\curl u = \omega$, it is possible to
reconstruct the velocity field $u$ from the vorticity $\omega$, up to
an additive constant, by the Biot-Savart formula, see Section~\ref{A1}
for a detailed discussion. However, a uniform bound on the vorticity
does not allow to control the $L^\infty$ norm of the velocity field,
hence a priori estimates are not sufficient to prove that solutions of
\eqref{NSeq} stay uniformly bounded in time.

Global existence of solutions to the Navier-Stokes equations
\eqref{NSeq} in the space $X$ was first established by Giga, Matsui,
and Sawada \cite{GIM,GMS}. The proof in \cite{GMS} shows that the
$L^\infty$ norm of the velocity field cannot grow faster than a double
exponential as $t \to \infty$, but that pessimistic estimate was
subsequently improved by Sawada and Taniuchi \cite{ST} who obtained a
single exponential bound. These early results are summarized in 
our first statement\:

\begin{theorem}\label{thm1} {\bf \cite{GMS,ST}}
For any $u_0 \in X$ with $\div u_0 = 0$, the Navier-Stokes 
equations \eqref{NSeq} have a unique global (mild) solution 
$u \in C^0([0,+\infty),X)$ with initial data $u_0$. Moreover, 
if the initial vorticity $\omega_0 = \curl u_0$ is bounded, 
we have the estimate
\begin{equation}\label{thm1est}
  \|u(\cdot,t)\|_{L^\infty} \,\le\, K_0 \|u_0\|_{L^\infty}\,\exp
  \Bigl(K_0\,\|\omega_0\|_{L^\infty} t\Bigr)~, \qquad t \ge 0~,
\end{equation}
where $K_0 \ge 1$ is a universal constant. 
\end{theorem}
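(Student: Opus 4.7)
The plan is to decompose the argument into (a) local well-posedness in $X$ by a Kato-style contraction, (b) the a priori bound $\|\omega(\cdot,t)\|_{L^\infty}\le\|\omega_0\|_{L^\infty}$ from the parabolic maximum principle, (c) a vorticity-based reformulation of the nonlinearity followed by Gronwall, which yields \eqref{thm1est} and, as a by-product, rules out finite-time blow-up.

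For (a), applying the Leray projector $P$ to \eqref{NSeq} eliminates the pressure and produces the mild formulation
\begin{equation*}
u(t) \,=\, e^{\nu t\Delta} u_0 \,-\, \int_0^t e^{\nu(t-s)\Delta}\,P\,\div(u\otimes u)(s)\dd s.
\end{equation*}
Standard heat-kernel estimates give $\|e^{\nu t\Delta}P\,\div\|_{L^\infty\to L^\infty}\le C/\sqrt{\nu t}$, so the bilinear map $(u,v)\mapsto -\int_0^t e^{\nu(t-s)\Delta}P\div(u\otimes v)\dd s$ is a contraction on $C^0([0,T];X)$ whenever $T$ is small compared with $\nu/\|u_0\|_{L^\infty}^2$. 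Banach's fixed-point theorem then provides a unique mild solution on a maximal interval $[0,T_*)$; continuity at $t=0$ and preservation of the divergence-free condition are routine. For (b), parabolic smoothing applied to this mild formulation shows $\nabla u(\cdot,t)\in L^\infty(\R^2)$ for $t>0$, so the vorticity $\omega=\curl u$ is bounded and solves the linear transport-diffusion equation \eqref{omeq} with divergence-free drift; the parabolic maximum principle then gives $\|\omega(\cdot,t)\|_{L^\infty}\le\|\omega_0\|_{L^\infty}$ on $[0,T_*)$.

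The heart of the argument is step (c). I would use the planar identity
\begin{equation*}
(u\cdot\nabla)u \,=\, \omega\,u^\perp \,+\, \nabla\Bigl(\tfrac{|u|^2}{2}\Bigr), \qquad u^\perp=(-u_2,u_1),
\end{equation*}
valid for any divergence-free vector field in $\R^2$, and the fact that $P\nabla=0$ on gradients, to recast the mild equation as
\begin{equation*}
u(t) \,=\, e^{\nu t\Delta} u_0 \,-\, \int_0^t e^{\nu(t-s)\Delta}\,P\bigl(\omega\,u^\perp\bigr)(s)\dd s.
\end{equation*}
The decisive gain is that only a pointwise product, not a spatial derivative, appears under the integral. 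Granting the uniform bound $\|e^{\nu t\Delta}P\,f\|_{L^\infty}\le K_1\|f\|_{L^\infty}$ for all $t>0$, and combining it with the vorticity bound from (b), one arrives at
\begin{equation*}
\|u(t)\|_{L^\infty} \,\le\, \|u_0\|_{L^\infty} \,+\, K_1\|\omega_0\|_{L^\infty}\int_0^t \|u(s)\|_{L^\infty}\dd s,
\end{equation*}
and Gronwall's lemma produces \eqref{thm1est} on $[0,T_*)$. Since this bound precludes finite-time blow-up, the local solution extends to all times and $T_*=+\infty$.

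The main obstacle is the uniform-in-$t$ $L^\infty$ estimate for $e^{\nu t\Delta}P$: the operator $P=I-\nabla\Delta^{-1}\div$ involves Riesz transforms, which are \emph{not} bounded on $L^\infty(\R^2)$. I would settle this by making the convolution kernel $K^P(\cdot,t)$ of $e^{\nu t\Delta}P$ explicit; parabolic scaling $K^P(x,t)=(\nu t)^{-1}K^P(x/\sqrt{\nu t},1)$ reduces the matter to $t=1$, where the Gaussian factor dominates the $|x|^{-2}$ behaviour inherited from the Riesz part, so that $K^P(\cdot,1)\in L^1(\R^2)$ with a universal norm bound. With this lemma in hand, the remaining steps are elementary.
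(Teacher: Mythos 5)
Your steps (a) and (b) coincide with the paper's Proposition~\ref{locex} and the a priori vorticity bound \eqref{ombound}, and your rewriting of the nonlinearity as $\P(u^\perp\omega)$ is exactly the paper's equation \eqref{NS3}. But your step (c) hinges on a key lemma that is false: $e^{\nu t\Delta}\P$ is \emph{not} bounded on $L^\infty(\R^2)$, for any fixed $t$, let alone uniformly. Your heuristic --- that the Gaussian factor dominates the Riesz part --- fails because the obstruction lives at \emph{low} frequencies, which the heat semigroup does not touch. The symbol of $e^{\nu t\Delta}\P$ is $e^{-\nu t|\xi|^2}\bigl(\delta_{jk}-\xi_j\xi_k/|\xi|^2\bigr)$, and $\xi_j\xi_k/|\xi|^2$ is discontinuous at $\xi=0$, where $e^{-\nu t|\xi|^2}\approx 1$ provides no help. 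In physical space the kernel (the Oseen kernel) is the heat kernel convolved with the Calder\'on--Zygmund kernel $\partial_j\partial_k E$, $E=\frac{1}{2\pi}\log|x|$; since the heat kernel is an approximate identity at scale $\sqrt{\nu t}$, for $|x|\gg\sqrt{\nu t}$ the kernel behaves like $(2x_jx_k-|x|^2\delta_{jk})/(2\pi|x|^4)$ --- precisely the kernel $K_{k\ell}$ of \eqref{MKdef}, homogeneous of degree $-2$ with zero angular mean. Its modulus is logarithmically non-integrable at infinity, and testing against a bounded $f$ whose sign matches the angular part of the kernel shows $\|e^{\nu t\Delta}\P f\|_{L^\infty}=\infty$ is possible. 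So $K^P(\cdot,1)\notin L^1(\R^2)$, your Gronwall inequality never gets started, and this is exactly the difficulty the paper flags just before Proposition~\ref{globex}: one cannot control $\|\P(u^\perp\omega)\|_{L^\infty}$ by $\|u\|_{L^\infty}\|\omega\|_{L^\infty}$ because the Leray--Hopf projection is not continuous on $L^\infty(\R^2)^2$.

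The paper's repair (Sawada--Taniuchi) is a Fourier splitting, and it is instructive to see why it succeeds where your lemma fails. Low frequencies of $u(t)$ are estimated from the mild equation using the operator $Q_\delta\nabla\P$, whose symbol is compactly supported, so its kernel is genuinely integrable with $L^1$ norm of order $\delta$ (Lemma~\ref{Qest}, parts 1--2); high frequencies are estimated not through the equation at all but through the Biot--Savart relation, $\|(\1-Q_\delta)u\|_{L^\infty}\le C_2\,\delta^{-1}\|\omega\|_{L^\infty}$ (Lemma~\ref{Qest}, part 3). Optimizing over $\delta$ yields
\begin{equation*}
  \|u(t)\|_{L^\infty} \,\le\, C\|u_0\|_{L^\infty} + C\|\omega_0\|_{L^\infty}^{1/2}
  \left(\int_0^t \|u(s)\|_{L^\infty}^2 \dd s\right)^{1/2},
\end{equation*}
which after squaring and Gronwall gives \eqref{thm1est}. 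Note that the resulting inequality is quadratic under the integral, not the linear one you wrote: the frequency cutoff trades the unbounded projector for the factor $\delta+\delta^{-1}\|\omega_0\|_{L^\infty}$, and the optimization over $\delta$ is the substitute for your (false) uniform bound. If you want to salvage your outline, this splitting --- or some other mechanism exploiting the vorticity to kill the low-frequency tail --- must be inserted at precisely the point where you invoked $\|e^{\nu t\Delta}\P\|_{L^\infty\to L^\infty}\le K_1$.
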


In Theorem~\ref{thm1}, a {\em mild} solution refers to a solution of
the integral equation associated with \eqref{NSeq}, see
Section~\ref{ss2.4} below for details. The assumption that the initial
vorticity be bounded is only needed to derive the nice estimate
\eqref{thm1est}, which does not depend on the viscosity parameter. 
If we only suppose that $u_0 \in X$, $\div u_0 = 0$, and $u_0 
\not\equiv 0$, the local existence theory shows that 
\begin{equation}\label{smoothT}
  \sup_{0 \le t \le T}\|u(\cdot,t)\|_{L^\infty} + \sup_{0 < t \le T}
  (\nu t)^{1/2}\|\nabla u(\cdot,t)\|_{L^\infty} \,\le\, K_1 
  \|u_0\|_{L^\infty}\,, \quad \hbox{for}~\, 
  T \,=\, \frac{\nu}{K_1^2\|u_0\|_{L^\infty}^2}~,
\end{equation}
where $K_1 \ge 1$ is a universal constant, see Section~\ref{ss2.4}.
It follows in particular from \eqref{smoothT} that
$\|\omega(\cdot,T)\|_{L^\infty} \le 2\|\nabla u(\cdot,T)\|_{L^\infty}
\le 2K_1^2 \nu^{-1}\|u_0\|_{L^\infty}^2$, so if we use \eqref{smoothT}
for $t \in [0,T]$ and \eqref{thm1est} for $t \ge T$ we obtain a bound
of the form
\begin{equation}\label{thm1estbis}
  \|u(\cdot,t)\|_{L^\infty} \,\le\, K_2 \|u_0\|_{L^\infty}\,\exp
  \Bigl(K_2\,\nu^{-1}\|u_0\|_{L^\infty}^2 t\Bigr)~, \qquad t \ge 0~,
\end{equation}
for some universal constant $K_2 \ge 1$. Estimate \eqref{thm1estbis}
holds for all $u_0 \in X$ with $\div u_0 = 0$, but the right-hand
side depends explicitly on the viscosity parameter $\nu$. 

There are reasons to believe that the exponential upper bound
\eqref{thm1est} is the best one can obtain if one only uses the a
priori estimates given by the vorticity equation. However, as was
shown recently by S.~Zelik \cite{Ze}, the above results can be
improved in a spectacular way if one also exploits the local
dissipation law \eqref{EEloc}, which asserts that no energy is created
inside the system. The work of Zelik is devoted to a more general
Navier-Stokes system, which includes an additional linear damping term
and an external force, but in the particular case of equation
\eqref{NSeq} a slight extension of the results of \cite{Ze} gives the
following statement\:

\begin{theorem}\label{thm2} {\bf \cite[revisited]{Ze}}
If $u_0 \in X$, $\div u_0 = 0$, and $\omega_0 = \curl u_0 
\in L^\infty(\R^2)$, the solution of the Navier-Stokes equations 
\eqref{NSeq} given by Theorem~\ref{thm1} satisfies
\begin{equation}\label{thm2est}
  \|u(\cdot,t)\|_{L^\infty} \,\le\, K_3 \|u_0\|_{L^\infty}\Bigl(1 + 
  \|\omega_0\|_{L^\infty} t\Bigr)~, \qquad t \ge 0~,
\end{equation}
where $K_3 \ge 1$ is a universal constant. 
\end{theorem}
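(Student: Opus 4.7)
The strategy, following \cite{Ze}, is to extract from the local dissipation law \eqref{EEloc} a linear-in-$t$ bound on a uniformly local $L^2$ norm of $u$, and then to convert this into the sought pointwise bound via a Biot--Savart representation that exploits the a priori bound $\|\omega(\cdot,t)\|_{L^\infty}\le\|\omega_0\|_{L^\infty}$, which itself follows from the parabolic maximum principle applied to \eqref{omeq}.

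\emph{Step 1: localized energy estimate.} Pick a smooth nonnegative weight $\rhoR$ of unit mass supported (or essentially concentrated) in a ball of radius $R$, set $E_{x_0}(t)=\frac12\int \rhoR(x-x_0)|u(x,t)|^2\dd x$ and $M(t)=\sup_{x_0\in\R^2}E_{x_0}(t)$. Multiplying \eqref{EEloc} by $\rhoR(\cdot-x_0)$ and integrating by parts produces
\begin{equation*}
  \frac{\D}{\D t}E_{x_0}(t) + \nu\int \rhoR(x-x_0)|\nabla u|^2\dd x
  \,=\, \nu\int e\,\Delta\rhoR(x-x_0)\dd x + \int (p+e)\,u\cdot\nabla\rhoR(x-x_0)\dd x.
\end{equation*}
The diffusive term on the right is at most $C\nu R^{-2}M(t)$; the convection piece $\int e\,u\cdot\nabla\rhoR$ is bounded by $CR^{-1}\|u(\cdot,t)\|_{L^\infty}M(t)$; the really delicate quantity is the pressure flux.

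\emph{Step 2: pressure flux.} Since $p$ is defined only up to a constant and $\int\nabla\rhoR\,\dd x=0$, replace $p$ by $p-\bar p$ where $\bar p$ is the local mean of $p$ on a ball of radius $CR$ around $x_0$. The canonical $\BMO$ representative of \eqref{pressure} is given by $p=R_jR_k(u_ju_k)$ for suitable Riesz transforms $R_j$, and a standard Calder\'on--Zygmund localization combined with the John--Nirenberg inequality yields $\|p-\bar p\|_{L^2(B_{CR})}\le C\|u\|_{L^\infty}\,(R M(t))^{1/2}$. Together with Cauchy--Schwarz this gives
\begin{equation*}
  \Bigl|\int p\,u\cdot\nabla\rhoR(x-x_0)\dd x\Bigr|
  \,\le\, \frac{C}{R}\|u(\cdot,t)\|_{L^\infty}\,M(t).
\end{equation*}

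\emph{Step 3: closure via Biot--Savart.} Assembling Steps~1 and 2 and taking the supremum over $x_0$, I arrive at
\begin{equation*}
  \frac{\D}{\D t}M(t) \,\le\, C\Bigl(\nu R^{-2} + R^{-1}\|u(\cdot,t)\|_{L^\infty}\Bigr)M(t).
\end{equation*}
To close this I would invoke the new Biot--Savart formulation announced in the introduction, which for bounded divergence-free $u$ with bounded vorticity should produce, without logarithmic loss,
\begin{equation*}
  \|u(\cdot,t)\|_{L^\infty} \,\le\, C\Bigl(R^{-1}M(t)^{1/2} + R\,\|\omega_0\|_{L^\infty}\Bigr).
\end{equation*}
Choosing $R$ so that the two terms balance ($R\,\|\omega_0\|_{L^\infty}\simeq R^{-1}M(t)^{1/2}$) and substituting into the differential inequality yields, after a short computation, $\frac{\D}{\D t}M(t)^{1/2}\le C\|\omega_0\|_{L^\infty}\|u_0\|_{L^\infty}$. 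Since $M(0)^{1/2}\le CR\|u_0\|_{L^\infty}$, we conclude $M(t)^{1/2}\le CR\|u_0\|_{L^\infty}(1+\|\omega_0\|_{L^\infty}t)$, and reinjecting into the reverse Biot--Savart inequality gives \eqref{thm2est}.

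\textbf{Main obstacle.} The heart of the matter is the pressure estimate in Step~2. For $u$ merely bounded, $p$ does not belong to any Lebesgue space and exists only as a $\BMO$ representative, so every bound must be taken modulo an additive constant and must rely on the cancellation $\int\nabla\rhoR=0$. What makes the linear-in-$t$ bound of \eqref{thm2est} possible (as opposed to the exponential bound \eqref{thm1est} of Theorem~\ref{thm1}) is that the pressure flux is controlled \emph{linearly} by $M(t)$ rather than by $M(t)^{1/2}\|u\|_{L^\infty}^2$ or a worse combination. A secondary technical point is the choice of the weight $\rhoR$: an exponentially decaying weight, rather than a compactly supported cutoff, is convenient to ensure that the supremum defining $M(t)$ is stable under translations and compatible with all the commutators appearing in Steps~1 and 2.
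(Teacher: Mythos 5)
Your skeleton matches the paper's: a uniformly local energy estimate exploiting the local dissipation law, the maximum principle $\|\omega(t)\|_{L^\infty}\le\|\omega_0\|_{L^\infty}$, and a symmetrized Biot--Savart conversion (your claimed inequality $\|u\|_{L^\infty}\le C(R^{-1}M^{1/2}+R\|\omega_0\|_{L^\infty})$ is indeed what Corollary~\ref{BScor} and the proof of Proposition~\ref{bestglob} deliver). But there are two genuine gaps. The first is your Step~2. The bound $\|p-\bar p\|_{L^2(B_{CR})}\le C\|u\|_{L^\infty}(RM(t))^{1/2}$ does not follow from Calder\'on--Zygmund theory plus John--Nirenberg: those tools give only the global, quadratic estimate $\|p-\bar p\|_{L^2(B_R)}\le C R\,\|p\|_{\BMO}\le CR\,\|u\|_{L^\infty}^2$, and $\BMO$ information is scale-invariant and global, so it cannot convert one factor of $\|u\|_{L^\infty}$ into a local energy. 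The paper sidesteps $\BMO$ entirely at this stage: it works with the modified pressure $q=p+\frac12|u|^2$, which solves $-\Delta q=\div(u^\perp\omega)$, and uses the explicit representation of Lemma~\ref{qrepresentation} --- a near-field piece $q_1$ involving $u\,\omega$ (hence \emph{linear} in $u$ and carrying the bounded vorticity), local quadratic pieces $q_2,q_3$ at an adjustable cutoff scale $r$, and a far-field piece $q_4$ controlled by $\sup_z\|u\|_{L^2(B_z^R)}^2$. This yields the flux bound \eqref{qlocest}, schematically $\frac{r}{R}\|\omega\|_{L^\infty}Z_R^2+\frac{1}{rR}Z_R^3+\frac{1}{R^2}\sup_z\|u\|_{L^2(B_z^{2R})}^3$; the linear-in-$\omega$ term together with the free parameter $r$ (later optimized as $r=(RR_0)^{1/2}$) is precisely the structure your $\BMO$ argument cannot produce.

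The second gap is your closure in Step~3. Even granting your Step~2, substituting the balancing choice $R\,\|\omega_0\|_{L^\infty}\simeq R^{-1}M^{1/2}$ into $\frac{\D}{\D t}M\le C(\nu R^{-2}+R^{-1}\|u\|_{L^\infty})M$ gives $R^{-1}\|u\|_{L^\infty}\le C\|\omega_0\|_{L^\infty}$, hence $\frac{\D}{\D t}M\le C\|\omega_0\|_{L^\infty}M$ and \emph{exponential} growth --- you recover only \eqref{thm1est}, not \eqref{thm2est}; your asserted $\frac{\D}{\D t}M^{1/2}\le C\|\omega_0\|_{L^\infty}\|u_0\|_{L^\infty}$ does not follow from your own inequality. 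Moreover $M$ depends on $R$, so letting $R$ vary with $t$ inside the differential inequality is not legitimate without an extra term from $\partial_R M$. The paper's mechanism for linearity is different (Proposition~\ref{Zprop}): fix the observation time $t$, choose $R$ once and for all as in \eqref{Rdef}, i.e. $R\ge\max\{R_0,\,C_7\sqrt{\nu t},\,C_7\|u_0\|_{L^\infty}\|\omega_0\|_{L^\infty}t^2\}$, and show by a continuity/bootstrap argument that $C_5\int_0^t A(s)\dd s\le 1$, so that Gronwall contributes only a fixed factor $\e$ and $Z_R(t)\le C_6R\|u_0\|_{L^\infty}$: the uniformly local energy never grows beyond its initial scale. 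The linear growth in $t$ then comes \emph{only} from the conversion step, $\|u(t)\|_{L^\infty}\lesssim (Z_R\,\|\omega_0\|_{L^\infty})^{1/2}\lesssim(R\,\|u_0\|_{L^\infty}\|\omega_0\|_{L^\infty})^{1/2}\lesssim \|u_0\|_{L^\infty}(1+\|\omega_0\|_{L^\infty}t)$, because $R\sim t^2$; short times are handled separately via \eqref{smoothT}. So the missing ideas are concretely: the representation of $q$ making the vorticity enter the pressure flux linearly with a tunable scale $r$, and the ``large $R$ depending on $T$, bounded Gronwall exponent'' bootstrap replacing a pointwise-in-time ODE argument.
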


Estimate \eqref{thm2est} is clearly superior to \eqref{thm1est}, 
because it shows that the $L^\infty$ norm of the velocity field grows
at most linearly as $t \to \infty$. As before, if we do not assume 
that $\omega_0 \in L^\infty(\R^2)$, we can use \eqref{smoothT} for 
short times to prove that the bound \eqref{thm2est} remains valid if 
$\|\omega_0\|_{L^\infty}$ is replaced by $2K_1^2\nu^{-1} \|u_0\|_{L^\infty}^2$ 
in the right-hand side. We thus find
\begin{equation}\label{thm2estbis}
  \|u(\cdot,t)\|_{L^\infty} \,\le\, K_4 \|u_0\|_{L^\infty}\biggl(1 + 
  \frac{\|u_0\|_{L^\infty}^2t}{\nu}\biggr)~, \qquad t \ge 0~,
\end{equation}
for some universal constant $K_4 \ge 1$. 

The strategy of the proof of Theorem~\ref{thm2} in \cite{Ze} can be
roughly explained as follows. Suppose that we want to control the
solution of \eqref{NSeq} given by Theorem~\ref{thm1} on some large
time interval $[0,T]$. A natural idea is to compute, for $t \in
[0,T]$, the amount of energy contained in the ball of radius $R > 0$
centered at $x \in \R^2$\:
\begin{equation}\label{ERdef}
  E_R(x,t) \,=\, \frac12 \int_{B_x^R}|u(y,t)|^2 \dd y~, \qquad 
  \hbox{where}\quad B_x^R \,=\, \Bigl\{y \in \R^2\,\Big|\, 
  |y-x| \le R\Bigr\}~.
\end{equation}
Although the Navier-Stokes equations are dissipative, it is clear that
$E_R(x,t)$ is not necessarily a decreasing function of time, because
energy may enter the ball $B_x^R$ through the boundary due to the
advection term $\div((p+e)u)$ and the diffusion term $\nu \Delta e$ in
\eqref{EEloc}. However, the key observation is that these energy
fluxes become relatively negligible when the radius $R$ is taken
sufficiently large.  Indeed, since the velocity field $u(\cdot,t)$ is
bounded on $\R^2$ for any $t \in [0,T]$, we expect that for large $R$
the energy $E_R(x,t)$ will be proportional to the area of the ball
$B_x^R$, which is $\pi R^2$, whereas the flux terms will be
proportional to the length of the boundary $\partial B_x^R$, which is
$2\pi R$. This suggests that taking $R$ sufficiently large, depending
on $T$, may help controlling the relative contribution of the energy
entering the ball $B_x^R$ through the boundary. As a matter of fact,
S.~Zelik proved in \cite{Ze} that there exists a universal constant
$K_5 \ge 1$ such that
\begin{equation}\label{ulER}
  \sup_{t \in [0,T]}\,\sup_{x \in \R^2} \frac{1}{\pi R^2} 
  \int_{B_x^R}|u(y,t)|^2 \dd y \,\le\, K_5 \sup_{x \in \R^2} 
  \frac{1}{\pi R^2} \int_{B_x^R}|u_0(y)|^2 \dd y \,\le\, 
  K_5 \|u_0\|_{L^\infty}^2~, 
\end{equation}
provided $R$ is taken sufficiently large, depending on $T$. More
precisely, we shall see in Section~\ref{ss3.3} below that one can take
$R = \max\{R_0\,,\,C\sqrt{\nu T}\,,\,C\|u_0\|_{L^\infty}
\|\omega_0\|_{L^\infty}T^2\}$, where $C > 0$ is a universal constant
and $R_0 = \|u_0\|_{L^\infty}/\|\omega_0\|_{L^\infty}$. Estimate
\eqref{ulER} is an example of {\em uniformly local energy estimate}
for the Navier-Stokes equations, because the quantity it involves is
equivalent to the square of the norm of $u$ in the uniformly local
Lebesgue space $L^2_\ul(\R^2)$, see Section~\ref{ss3.1} for an
introduction to these spaces. It is clear that \eqref{ulER} is optimal
in the sense that, if the initial velocity $u_0$ is a nonzero
constant, then $u(\cdot,t) = u_0$ for all $t \ge 0$ and \eqref{ulER}
becomes an equality if $K_5 = 1$. What may not be optimal is 
the dependence of the radius $R$ upon the observation time $T$, 
namely $R = \OO(T^2)$ as $T \to \infty$. If we had \eqref{ulER}
for a smaller value of $R$, this would improve inequality
\eqref{thm2est}, because as we shall see in Section~\ref{ss3.4} the
right-hand side of \eqref{thm2est} behaves like $\|u_0\|_{L^\infty}^{1/2} 
\|\omega_0\|_{L^\infty}^{1/2}R(t)^{1/2}$ as $t \to \infty$.

It is worth emphasizing that estimates \eqref{thm1est} and
\eqref{thm2est} do not involve the viscosity parameter $\nu$, and thus
do not rely on energy dissipation in the system. Passing to the limit
as $\nu \to 0$, they remain valid for global solutions of the Euler
equations in $\R^2$ with bounded velocity and vorticity. Such
solutions were recently studied by Ambrose, Kelliher, Lopes Filho, and
Nussenzveig Lopes in \cite{AKLN}, following an earlier work by
Ph. Serfati \cite{Se}, see also \cite{Ke} for further improvements.
Existence can be proved by an approximation argument, which is quite
different from the simple existence proof presented in
Section~\ref{s2} for the Navier-Stokes equations, but once global
solutions have been constructed the bounds \eqref{thm1est},
\eqref{thm2est} can be established just as in the viscous case, see
also \cite{CZ}. On the other hand, if one does use energy dissipation
when $\nu > 0$, it is possible to obtain the following uniformly local
enstrophy estimate\:
\begin{equation}\label{ulom}
   \,\sup_{x \in \R^2} \frac{1}{\pi R^2} \int_{B_x^R}|\omega(y,t)|^2 \dd y 
  \,\le\, K_6\,\frac{\|u_0\|_{L^\infty}^2}{\nu t}~, \qquad 0 < t \le T~, 
\end{equation}
where $R = R(T)$ is as in \eqref{ulER} and $K_6 > 0$ is a universal
constant. Of course, if the initial vorticity is bounded, the
left-hand side of \eqref{ulom} is also smaller than 
$\|\omega_0\|_{L^\infty}^2$ by the maximum principle. 
Estimate \eqref{ulom} shows that a suitable average of the 
vorticity distribution converges to zero like $t^{-1/2}$ as 
$t \to \infty$. This strongly suggests that the long-time behavior
of solutions to \eqref{NSeq} should be governed by irrotational 
flows, although no precise statement is available so far. 

Theorem~\ref{thm2} is the best we can do without further assumptions
on the initial data. Since the right-hand side of \eqref{thm2est}
still depends on time, although in a rather mild way, we do not have a
satisfactory answer yet to the original question of estimating the
energy of a solution of \eqref{NSeq} in an observation domain $D
\subset \R^2$ in terms of the initial energy density only. There is no
reason to believe that the linear time dependence in \eqref{thm2est}
is sharp, and to the author's knowledge there is no example of 
a solution to the Navier-Stokes equations \eqref{NSeq} for which
the $L^\infty$ norm of the velocity field grows unboundedly in time. 
However, we believe that genuinely new ideas are needed to
improve estimate \eqref{thm2est}. 

To conclude this introduction, we briefly present an interesting
particular case where the conclusion of Theorem~\ref{thm2} can be
substantially strengthened. Following \cite{AM, GS1, GS2}, we consider
the Navier-Stokes equations \eqref{NSeq} in the infinite strip
$\Omega_L = \R \times [0,L]$, with periodic boundary conditions. 
Equivalently, we restrict ourselves to solutions of \eqref{NSeq} 
in $\R^2$ for which the velocity field $u(x,t)$ and the pressure 
$p(x,t)$ are periodic of period $L > 0$ in one space direction, 
which is chosen to be the second coordinate axis. We denote by $X_L$ 
the set of all $u \in X$ such that $u(x_1,x_2) = u(x_1,x_2+L)$ for 
all $x = (x_1,x_2) \in \R^2$. If $u \in X_L$ is divergence free, 
one can show that the elliptic equation \eqref{pressure} has a 
bounded solution which is $L$-periodic with respect to the 
second coordinate $x_2$, and that this solution is unique up 
to an additive constant. This is the canonical definition of 
the pressure in the present context, which agrees with the 
choice made in Theorems~\ref{thm1} and \ref{thm2}. 
We are now in position to state our last result\:

\begin{theorem}\label{thm3} {\bf \cite{GS2}}
For any $u_0 \in X_L$ with $\div u_0 = 0$, the Navier-Stokes 
equations \eqref{NSeq} have a unique global (mild) solution 
$u \in C^0([0,+\infty),X_L)$ with initial data $u_0$. Moreover,
we have the estimate
\begin{equation}\label{thm3est}
  \|u(\cdot,t)\|_{L^\infty} + (\nu t)^{1/2}\|\omega(\cdot,t)
  \|_{L^\infty} \,\le\, K_7 \|u_0\|_{L^\infty}(1+ R_u^5)~, 
  \qquad t > 0~,
\end{equation}
where $R_u = \nu^{-1}L\|u_0\|_{L^\infty}$ is the initial Reynolds 
number and $K_7 > 0$ is a universal constant. 
\end{theorem}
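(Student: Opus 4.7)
The plan is to upgrade the Zelik-type linear-in-time bound of Theorem~\ref{thm2} to a uniform-in-time bound by exploiting the fact that, in the strip geometry, the Biot-Savart law becomes essentially local at the scale $L$. I would proceed in four steps.

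First, local well-posedness in $X_L$ is immediate from Theorem~\ref{thm1} together with the observation that the closed subspace $X_L \subset X$ of velocity fields that are $L$-periodic in $x_2$ is invariant under the Navier-Stokes flow, since the equations are translation-invariant and the canonical pressure \eqref{pressure} inherits the same periodicity. Global existence is then inherited from Theorem~\ref{thm1}. Using the short-time smoothing estimate \eqref{smoothT}, I may assume without loss of generality that the initial vorticity is bounded at the cost of replacing $\|\omega_0\|_{L^\infty}$ by $C \nu^{-1}\|u_0\|_{L^\infty}^2$, which contributes a factor polynomial in $R_u$.

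Second, I would establish a uniformly local enstrophy estimate of the form
\begin{equation*}
  \sup_{x_1 \in \R}\,\frac{1}{L^2}\int_{x_1}^{x_1+L}\!\int_0^L |u(y,t)|^2 \dd y \,\le\, C(R_u)\,\|u_0\|_{L^\infty}^2~, \qquad t \ge 0~,
\end{equation*}
by adapting Zelik's localized energy argument to cells of size $L \times L$ dictated by the geometry. The decisive gain over the plane is that the flux of $(p+e)u$ through the \emph{vertical} sides of such a cell is proportional to $L$, which is now an intrinsic and \emph{fixed} length scale, rather than to the growing radius $R(T) = \OO(T^2)$ that produces the linear growth in \eqref{thm2est}. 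Consequently the local energy balance derived from \eqref{EEloc} yields a genuine differential inequality for the cell energy $E_L(x_1,t)$, in which the boundary fluxes can be absorbed using the Poincar\'e inequality in the bounded $x_2$-direction (applied to the zero-mean part of $u$) together with the dissipation term $\nu|\nabla u|^2$. The resulting inequality gives a uniform-in-time bound, at the price of a polynomial factor in $R_u$.

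Third, I would invoke the Biot-Savart representation on the strip to recover the velocity from the vorticity. The periodic Biot-Savart kernel on $\R \times [0,L]$ splits into a constant-shear term, determined by the $x_2$-average of $\omega$, plus a correction whose kernel decays exponentially in $|x_1-y_1|/L$. Thus $u(x,t)$, up to a controllable mean flow, depends on $\omega$ only in an $O(L)$-neighbourhood of $x$. Combining the $L^\infty$ bound on $\omega$ from step one with the uniformly local $L^2$ bound on $u$ (and hence on $\omega$, via parabolic smoothing applied to \eqref{omeq}) from step two, the standard $L^\infty$--$L^2$ interpolation for the Biot-Savart operator yields $\|u(\cdot,t)\|_{L^\infty} \le K_7 \|u_0\|_{L^\infty}(1+R_u^5)$, where the power of $R_u$ is what one obtains after tracking all the prefactors through the interpolation. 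The additional term $(\nu t)^{1/2}\|\omega(\cdot,t)\|_{L^\infty}$ in \eqref{thm3est} then follows by feeding this uniform $L^\infty$ bound on $u$ back into the transport-diffusion equation \eqref{omeq} and applying the standard parabolic smoothing for scalar advection-diffusion with bounded divergence-free drift.

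The main obstacle, and what genuinely distinguishes Theorem~\ref{thm3} from Theorem~\ref{thm2}, lies in step two. In the plane the pressure contribution and the advective flux through a large ball are only algebraically small in $R$, forcing the Zelik construction to let $R$ grow with $T$; in the strip, by contrast, the periodicity in $x_2$ provides an intrinsic cut-off at scale $L$, and the flux through the vertical boundaries of an $L \times L$ cell can be balanced against the in-cell dissipation without inflating the observation window. Executing this balance quantitatively, while keeping track of how the constants depend on $R_u$, is the technical heart of the argument and is what produces the polynomial factor $1+R_u^5$.
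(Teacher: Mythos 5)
There is a genuine gap, and it sits exactly where the paper (which does not reproduce the proof of Theorem~\ref{thm3} -- it is quoted from \cite{GS2} -- but whose introduction pinpoints the crux) tells you to look: the mean flow $m(x_1,t) = L^{-1}\int_0^L u_2(x_1,x_2,t)\dd x_2$. Your step two claims that the cubic boundary fluxes through the vertical sides of an $L\times L$ cell ``can be absorbed using the Poincar\'e inequality in the bounded $x_2$-direction (applied to the zero-mean part of $u$) together with the dissipation term.'' But Poincar\'e only controls the oscillating part; the zero Fourier mode in $x_2$ -- the constant $x_2$-average of $u_1$ and the one-dimensional field $m(x_1,t)$ -- contributes flux terms that are in no way dominated by $\nu\int|\nabla u|^2$, since a slowly modulated shear/mean flow carries $O(1)$ energy flux with arbitrarily small dissipation. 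Moreover, absorbing a cubic flux into a quadratic dissipation requires smallness of the data relative to $\nu$, i.e.\ small $R_u$, whereas \eqref{thm3est} must hold for all $R_u$ with a polynomial constant. So the ``genuine differential inequality'' you announce does not close, and no uniform-in-time bound comes out of this step as stated.

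Your step three has the same hole in different clothing: you recover $u$ from $\omega$ ``up to a controllable mean flow,'' but controlling that mean flow \emph{is} the theorem. Since the $x_2$-average of $\omega$ equals $\partial_1 m$, a uniform bound on $\|\omega\|_{L^\infty}$ controls only the derivative of $m$; $m$ itself is an antiderivative of a bounded function and is precisely the low-frequency quantity that escapes the vorticity bound -- the same failure as in the plane, now concentrated on a 1D field, exactly as the paper's discussion of \cite{AM} emphasizes. The proof in \cite{GS2} must, and does, study the dynamics of $m$, which satisfies the one-dimensional parabolic equation $\partial_t m + \partial_1 \overline{u_1 u_2} = \nu\,\partial_1^2 m$ (the $x_2$-average of $\partial_2 p$ vanishing by periodicity), coupled to dissipative decay estimates for the vorticity; this is where the viscosity enters essentially, consistent with the paper's remark that the right-hand side of \eqref{thm3est} has no finite limit as $\nu \to 0$. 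Your proposal never engages with the evolution of $m$, it invokes \eqref{omeq} and Biot-Savart only for the oscillating part, and the exponent in $1+R_u^5$ is asserted rather than produced by any tracked estimate. Steps one and four (invariance of $X_L$ under the flow, and the smoothing bound $(\nu t)^{1/2}\|\omega(t)\|_{L^\infty}$ once a uniform velocity bound is known) are fine, but the core of the argument is missing.
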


The conclusion of Theorem~\ref{thm3} is obviously much stronger than
that of Theorem~\ref{thm2}. First, the right-hand side of
\eqref{thm3est} does not depend on time, so that the velocity field
$u(\cdot,t)$ is uniformly bounded for all times. This is the result
that we were not able to prove in the general case. Next, the
vorticity distribution $\omega(\cdot,t)$ converges uniformly to zero
as $t \to \infty$, at the optimal rate $\OO(t^{-1/2})$ which is the
same as for the linear heat equation. This is clearly compatible with
\eqref{ulom}, but the estimate is now much more precise. In addition,
the proof of Theorem~\ref{thm3} given in \cite{GS2} provides detailed
informations on the long-time behavior of the solutions, which are
shown to converge exponentially in time to a shear flow governed by a
linear advection-diffusion equation on the real line $\R$. These very
strong conclusions are obtained using, in particular, the crucial
observation made in \cite{AM} that the Biot-Savart law is more
powerful when periodicity is assumed in one space direction. Indeed, a
uniform bound on the vorticity $\omega$ allows to control $L^\infty$
norm of the velocity field, except for the quantity $m(x_1,t) = 
L^{-1}\int_0^L u_2(x_1,x_2,t)\dd x_2$, which represents the
average of the second component of the velocity over one period. 
We observe, however, that the right-hand side of \eqref{thm3est} depends
on the viscosity parameter $\nu$, through the initial Reynolds number
$R_u$, and does not have a finite limit as $\nu \to 0$. As a matter of
fact, energy dissipation is an essential ingredient in the proof of
Theorem~\ref{thm3}.

The rest of these notes is organized as follows. Section~\ref{s2} is
entirely devoted to the proof of Theorem~\ref{thm1}. For the reader's
convenience, we first recall well known properties of the heat
semigroup in the space \eqref{Xdef}, we study the elliptic equation
\eqref{pressure}, and we define the Leray-Hopf projection which allows
us to eliminate the pressure from \eqref{NSeq}. We then establish
local existence of solutions in $X$ by applying a fixed point argument
to the integral equation associated with \eqref{NSeq}. Finally, we
prove global existence and obtain the exponential bound
\eqref{thm1est} using a nice Fourier-splitting argument borrowed from
\cite{ST}. In Section~\ref{s3}, we develop the uniformly local energy
estimates which are the key ingredient in the proof of
Theorem~\ref{thm2}. We first introduce the uniformly local Lebesgue
spaces, and specify a class of weight functions that can be used to
construct equivalent norms. Then, as a warm-up, we apply uniformly
local $L^2$ and $L^p$ estimates to solutions of the linear heat
equation. The core of the proof is Section~\ref{ss3.3} where,
following the approach of Zelik \cite{Ze}, we use uniformly local
energy estimates to control the solutions of the Navier-Stokes
equations in $\R^2$. This gives estimate \eqref{ulER}, and it is then
relatively simple to deduce the upper bound \eqref{thm2est} as
well as the enstrophy estimate \eqref{ulom}. The final section is an
appendix, where important auxiliary results are established.  We first
discuss the Biot-Savart formula, which allows us to reconstruct the
velocity field from the vorticity up to an additive constant. We also
establish a new representation formula for the pressure, which can be
expressed as an absolutely convergent integral involving the velocity
field and the vorticity. Finally miscellaneous notations and results
are collected in the last subsection, for easy reference. 

\medskip\noindent {\bf Disclaimer.} The present text is a set of
lecture notes, not an original research article. Most of the results
presented here have already been published elsewhere, and are not due
to the author. In particular, the proof of Theorem~\ref{thm1} in
Section~\ref{s2} is entirely taken from \cite{GIM,GMS,ST}, and the
preliminary material collected in Sections~\ref{ss2.1}--\ref{ss2.3}
can be found in many textbooks. Section~3 is a little bit more
original, although the statement and the proof of Theorem~\ref{thm2}
are taken from the work of Zelik \cite{Ze} with relatively minor
modifications, see also \cite{CZ} for recent improvements in the same
direction.  Lemma~\ref{rholem} is apparently new, and gives a
characterization of admissible weights which is substantially more
general than what can be found in the literature, see
e.g. \cite[Definition~4.1]{ARCD}.  Also, the way we treat the pressure
in Section~\ref{ss3.3} differs notably from \cite{Ze} and simplifies
somewhat the argument by avoiding the use of delicate interpolation
inequalities established in the appendices of \cite{Ze} and
\cite{CZ}. The linear bound \eqref{thm2est} does not appear explicitly
in \cite{Ze}, but follows quite easily from the uniformly local energy
estimate \eqref{ulER} and the a priori bound on the vorticity, see
\cite{CZ}. Finally, the Biot-Savart formula and the representation of
the pressure given in Section~\ref{s4} are apparently new, although
the recent work \cite{Ke} contains several interesting results in the
same spirit.

\medskip\noindent
{\bf Acknowledgements.} These notes are based on a six-hour
mini-course given during the thematic week ``Deterministic and
stochastic Navier-Stokes equations'' which was held on February 17-21, 
2014 at the University of Toulouse (France). The author warmly thanks
Violaine Roussier and Patrick Cattiaux for the invitation and the
perfect organizational work. Financial support by the LabEx CIMI and
the ANR project Dyficolti ANR-13-BS01-0003-01 is also gratefully
acknowledged. Finally the author thanks Sergey Zelik for valuable
discussions concerning the work \cite{Ze}, which served as a 
basis for the present notes. 

\section{The Cauchy problem with bounded initial data}
\label{s2}

In this section we study the Cauchy problem for the Navier-Stokes
equations
\begin{equation}\label{2DNS}
  \partial_t u + (u\cdot\nabla)u \,=\, \nu\Delta u - \nabla p~, 
  \qquad \div u \,=\, 0~,
\end{equation}
in the whole plane $\R^2$, with bounded initial data. We thus
assume that the velocity field $u = (u_1,u_2)$ belongs to the 
Banach space $X$ defined in \eqref{Xdef}, which is equipped with 
the uniform norm
\[
  \|u\|_{L^\infty} \,=\, \sup_{x \in \R^2}|u(x)|~, \qquad
  \hbox{where}\quad |u| \,=\, (u_1^2 + u_2^2)^{1/2}~.
\]
Our first goal is to reformulate the Navier-Stokes equations 
\eqref{2DNS} as an integral equation in $X$. This requires 
three preliminary steps, which are performed in 
Sections~\ref{ss2.1}-\ref{ss2.3}. 

\subsection{The heat semigroup on $C_\bu(\R^2)$}
\label{ss2.1}

Let $\LL(X)$ be the space of all bounded linear operators
on $X$. For any $t > 0$, we denote by $S(t) \in \LL(X)$ 
the linear operator defined, for all $u_0 \in X$, by the formula
\begin{equation}\label{heatkernel}
  \Bigl(S(t)u_0\Bigr)(x) \,=\, \frac{1}{4\pi t}\int_{\R^2} 
  \e^{-|x-y|^2/(4t)}u_0(y)\dd y~, \qquad x \in \R^2~. 
\end{equation}
We also set $S(0) = \1$ (the identity map). The family 
$\{S(t)\}_{t \ge 0}$ has the following properties, which are 
well known and easy to verify \cite[Section 2.3]{Ev}. 

\begin{enumerate}

\item 
For any $u_0 \in X$, one has $S(t)u_0 \in X$ for any $t \ge 0$
and $\|S(t)u_0\|_{L^\infty} \le \|u_0\|_{L^\infty}$.  That bound
holds because the heat kernel in \eqref{heatkernel} is positive and
normalized so that
\[
  \frac{1}{4\pi t}\int_{\R^2} \e^{-\frac{|x|^2}{4t}}\dd x \,=\, 
  1~, \qquad \hbox{for any } t > 0~. 
\]
\item One has $S(t_1 + t_2) = S(t_1)S(t_2)$ for all $t_1, t_2 \ge 0$. 
If both $t_1, t_2$ are positive, this follows from the identity
\[
  \frac{1}{4\pi}\int_{\R^2} \e^{-\frac{|x-y|^2}{4t_1}}\,\e^{-\frac{|y|^2}{4t_2}}
  \dd y \,=\, \frac{t_1 t_2}{t_1 + t_2}\e^{-\frac{|x|^2}{4(t_1+t_2)}}~,
  \qquad x \in \R^2~, 
\]
which can be established by a direct calculation, or by using the 
Fourier transform to compute the convolution product in the 
left-hand side. 

\item For any $u_0 \in X$, the map $t \mapsto S(t)u_0$ is continuous
from $[0,\infty)$ into $X$. More generally, for any $u_0 \in L^\infty(\R^2)^2$, 
one can verify that $t \mapsto S(t)u_0$ is continuous from 
$(0,\infty)$ into $X$, but right-continuity at $t = 0$ holds only
if $u_0 \in X$. 

\item If $u_0 \in X$ and if we set $u(x,t) = (S(t)u_0)(x)$ for 
$x \in \R^2$ and $t \ge 0$, then $u$ is smooth for $t > 0$ and 
satisfies the heat equation
\begin{equation}\label{heateq}
  \left\{\begin{array}{l} \partial_t u(x,t) \,=\, \Delta u(x,t)~, \\
  ~~\,u(x,0) \,=\, u_0(x)~,\end{array} \qquad 
  \begin{array}{l}x \in \R^2\,,\quad t > 0\,,\\ x \in \R^2\,.\end{array}
  \right.
\end{equation}
In fact $u$ is the unique bounded solution of \eqref{heateq}. 

\item For any $u_0 \in X$ and any multi-index $\alpha = (\alpha_1,
\alpha_2) \in \N^2$, there exists a constant $C > 0$ such that
\begin{equation}\label{heatder}
  \|\partial^\alpha S(t)u_0\|_{L^\infty} \,\le\, \frac{C}{t^{|\alpha|/2}}\,
  \|u_0\|_{L^\infty}~, \qquad \hbox{for all }t > 0~,
\end{equation}
where $\partial^\alpha = \partial_1^{\alpha_1}\partial_2^{\alpha_2}$ and
$|\alpha| = \alpha_1 + \alpha_2$. In particular $\|\nabla S(t)
u_0\|_{L^\infty} \le C t^{-1/2}\|u_0\|_{L^\infty}$. 
\end{enumerate}

\noindent
Properties~1--3 above can be summarized by saying that the 
family $\{S(t)\}_{t\ge 0}$ is a {\em strongly continuous semigroup 
of contractions in} $X$, see \cite{EN,Pa}. Property~4 implies that 
the Laplacian operator is the {\em generator} of the heat semigroup. 
Finally, the smoothing estimates \eqref{heatder} are related to the 
{\em analyticity} of the semigroup $\{S(t)\}_{t\ge 0}$ in $X$. 

\subsection{Determination of the pressure}
\label{ss2.2}

Applying the divergence operator to the first equation in \eqref{2DNS}, 
we obtain the elliptic equation
\begin{equation}\label{peq}
  -\Delta p(x) \,=\, \div\Bigl((u(x)\cdot\nabla)u(x)\Bigr)~, 
  \qquad x \in \R^2~,
\end{equation}
which determines the pressure $p$ up to a harmonic function
on $\R^2$. To construct a particular solution we observe that, 
since $\div u = 0$, we can write \eqref{peq} in the equivalent form
\[
  -\Delta p(x) \,=\, \sum_{k,\ell = 1}^2 \partial_k \partial_\ell 
  \Bigl(u_k(x) u_\ell(x)\Bigr)~, \qquad x \in \R^2~.
\]
If we take the Fourier transform of both sides and use the 
conventions specified in Section~\ref{A3}, we thus find
\[
  |\xi|^2 \hat p(\xi) \,=\, \sum_{k,\ell = 1}^2 (i\xi_k)(i\xi_\ell)
  \widehat{u_k u_\ell}(\xi)~, \qquad \hbox{hence}\quad
  \hat p(\xi) \,=\, \sum_{k,\ell = 1}^2 \frac{i\xi_k}{|\xi|}
  \frac{i\xi_\ell}{|\xi|}\,\widehat{u_k u_\ell}(\xi)~, 
\]
where equality holds in the space of tempered distributions 
$\SS'(\R^2)$. This gives (at least formally) the following 
solution to \eqref{peq}
\begin{equation}\label{pdef}
  p \,=\, \sum_{k,\ell = 1}^2 R_k R_\ell (u_k u_\ell)~,
\end{equation}
where $R_1, R_2$ are the {\em Riesz transforms} on $\R^2$, 
namely the linear operators defined as Fourier multipliers 
through the formulas
\begin{equation}\label{Rieszdef}
  \widehat{R_k f}(\xi) \,=\, \frac{i\xi_k}{|\xi|}\,\hat f(\xi)~, \qquad
  k = 1,2~, \qquad \xi \in \R^2\setminus\{0\}~. 
\end{equation}
Here $f \in \SS(\R^2)$ is an arbitrary test function. In ordinary space, 
the Riesz transforms are {\em singular integral operators} of the form
\[
  (R_k f)(x) \,=\, -\frac{1}{2\pi} \lim_{\epsilon \to 0}
  \int_{|y| \ge \epsilon} f(x-y)\frac{y_k}{|y|^3}\dd y~, \qquad
  k = 1,2~, \qquad x \in \R^2~.
\]
Using the Calder\'on-Zygmund theory \cite[Chapter I]{St}, one 
can prove that the Riesz transforms define bounded linear operators 
on $L^p(\R^2)$ for $p \in (1,\infty)$\:
\begin{equation}\label{RieszLp}
  \|R_k f\|_{L^p} \,\le\, C_p \|f\|_{L^p}~, \qquad k = 1,2~, 
  \qquad 1 < p < \infty~.
\end{equation}
Unfortunately, estimate \eqref{RieszLp} fails both for $p = 1$ 
and $p = \infty$. In particular, if $f \in L^\infty(\R^2)$, the
Riesz transform $R_k f$ is not a bounded function in general, but 
a function of {\em bounded mean oscillation} in the sense 
of the following definition. 

\begin{definition}\label{BMOdef}
A locally integrable function $f$ on $\R^2$ belongs to $\BMO(\R^2)$ 
if there exists $A \ge 0$ such that, for any ball $B \subset \R^2$
with nonzero Lebesgue measure $|B|$, one has
\begin{equation}\label{BMOineq}
  \frac{1}{|B|} \int_B |f(x) - f_B|\dd x \,\le\, A~, \qquad 
  \hbox{where}\quad f_B \,=\, \frac{1}{|B|}\int_B f(x)\dd x~.
\end{equation}
If $f \in \BMO(\R^2)$, the smallest bound $A$ in \eqref{BMOineq} is 
denoted by $\|f\|_{\BMO}$. 
\end{definition}

If $f \in L^\infty(\R^2)$, it is clear that $f \in \BMO(\R^2)$ and
$\|f\|_{\BMO} \le 2 \|f\|_{L^\infty}$. However, the space $\BMO(\R^2)$
is strictly larger than $L^\infty(\R^2)$. For instance, if $f(x) =
\log|x|$, then $f$ has bounded mean oscillation \cite[\S IV.1.1]{St},
but $f$ is obviously unbounded. It is also clear that adding a
constant to $f$ does not alter the quantity $\|f\|_{\BMO}$ which,  
therefore, is not a norm. However, one can show that $\|\cdot\|_{\BMO}$
defines a norm on the quotient space of $\BMO(\R^2)$ modulo the space
of constant functions, which becomes in this way a Banach space. We
refer to \cite[Chapter IV]{St} for a comprehensive study of functions
of bounded mean oscillation.

Returning to Riesz transforms, we mention the important fact that
$R_1, R_2$ can be extended to bounded linear operators from
$L^\infty(\R^2)$ into $\BMO(\R^2)$, and even from $\BMO(\R^2)$ into
itself, see e.g. \cite[Section~VII.4]{Me} for more general results
implying that particular one. We point out that these extensions have
the property that $R_1, R_2$ vanish on constant functions. As a
consequence, if $u \in X$ is divergence free, the formula \eqref{pdef}
makes sense and defines a function $p \in \BMO(\R^2)$, which satisfies
the elliptic equation \eqref{peq} in the sense of distributions. Thus
we have proved\:

\begin{lemma}\label{plem}
If $u \in X$ and $\div u = 0$, the elliptic equation \eqref{peq} 
has a solution $p \in \BMO(\R^2)$ given by \eqref{pdef}, which 
is unique up to an additive constant. 
\end{lemma}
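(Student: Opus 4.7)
The plan is to proceed in three steps: first, construct the candidate pressure by formula~\eqref{pdef}; second, verify that it satisfies the elliptic equation~\eqref{peq}; third, show uniqueness modulo constants. For existence, since $u \in X \subset L^\infty(\R^2)^2$, each product $u_k u_\ell$ lies in $L^\infty(\R^2) \subset \BMO(\R^2)$. The text has just recalled that the Riesz transforms $R_1, R_2$ extend to bounded operators on $\BMO(\R^2)$ that annihilate constants. Applying $R_\ell$ and then $R_k$ yields $R_k R_\ell(u_k u_\ell) \in \BMO(\R^2)$, so the finite sum $p$ in~\eqref{pdef} defines an element of $\BMO(\R^2)$.

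To verify the PDE, the first move is to use $\div u = 0$ to rewrite the source as $\div((u\cdot\nabla)u) = \sum_{k,\ell=1}^2 \partial_k\partial_\ell(u_k u_\ell)$, and then to compare Fourier multipliers: the symbol of $-\Delta R_k R_\ell$ is $|\xi|^2 \cdot (i\xi_k/|\xi|)(i\xi_\ell/|\xi|) = -\xi_k\xi_\ell$, which coincides with the symbol of $\partial_k\partial_\ell$. This gives $-\Delta p = \sum_{k,\ell}\partial_k\partial_\ell(u_k u_\ell)$, so~\eqref{peq} holds in $\mathcal{D}'(\R^2)$. Some care is required because elements of $\BMO$ are only tempered distributions modulo polynomials, but since both sides of~\eqref{peq} are genuine distributions (in fact derivatives of $L^\infty$ functions on the right, and the Laplacian of a $\BMO$ representative on the left), the polynomial ambiguity is harmless.

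For uniqueness, let $q = p_1 - p_2 \in \BMO(\R^2)$ be the difference of two solutions. Then $\Delta q = 0$ in $\mathcal{D}'(\R^2)$, so by Weyl's lemma $q$ is smooth and harmonic on $\R^2$. Since $\BMO$ functions have at most logarithmic mean growth and are thus tempered distributions (modulo constants), the identity $|\xi|^2 \hat q = 0$ forces $\hat q$ to be supported at the origin, whence $q$ is a polynomial. A direct computation using~\eqref{BMOineq} on balls $B(0,R)$ with $R\to\infty$ shows that no polynomial of degree $\ge 1$ belongs to $\BMO(\R^2)$: the mean oscillation of a monomial $x^\alpha$ with $|\alpha|\ge 1$ on $B(0,R)$ grows like $R^{|\alpha|}$. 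Hence $q$ is constant, completing the proof.

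The bulk of the work has been absorbed into the $\BMO \to \BMO$ boundedness of the Riesz transforms, which the paper cites from~\cite{St,Me}. The one genuinely delicate point in the proof itself is the last step of the uniqueness argument: one must handle $\BMO$ versus $\BMO$-modulo-constants carefully so that the Fourier-support reasoning is rigorous and concludes that a harmonic $\BMO$ function on $\R^2$ is a constant.
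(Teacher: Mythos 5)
Your proof is correct and follows essentially the same route as the paper: existence via formula \eqref{pdef} using the $L^\infty\to\BMO$ and $\BMO\to\BMO$ boundedness of the Riesz transforms (which vanish on constants), verification of \eqref{peq} by comparing Fourier symbols after rewriting the source as $\sum_{k,\ell}\partial_k\partial_\ell(u_ku_\ell)$, and uniqueness by noting that the difference of two solutions is harmonic and in $\BMO(\R^2)$, hence constant. Your final step simply spells out the ``slight generalization of Liouville's theorem'' that the paper invokes without proof (Weyl's lemma, Fourier support at the origin, and the fact that no nonconstant polynomial belongs to $\BMO(\R^2)$), a welcome but inessential elaboration of the same argument.
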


The uniqueness claim in Lemma~\ref{plem} is easy to prove. If 
$\tilde p$ is another solution of \eqref{peq}, then $\tilde p - p$ 
is a harmonic function on $\R^2$, hence is identically constant
if we assume that $\tilde p \in \BMO(\R^2)$ (this can be 
seen as a slight generalization of Liouville's theorem). More 
generally, we could consider other solutions of \eqref{peq}, 
but since we want to solve the Navier-Stokes equations \eqref{2DNS}
in the space $X$ it is natural to assume that the pressure gradient
is bounded. So the most general admissible solution of \eqref{peq}
is $p + \alpha + \beta_1 x_1 + \beta_2 x_2$, where $p$ is given by 
\eqref{pdef} and $\alpha,\beta_1,\beta_2 \in \R$. The constant
$\alpha$ is irrelevant, but nonzero values of $\beta_1, \beta_2$
would correspond to driving the fluid by a pressure gradient 
(like, for instance, in the classical Poiseuille flow). In these
notes, we are interested in the intrinsic dynamics of the 
Navier-Stokes equations \eqref{2DNS} in the absence of exterior 
forcing, so we always use the canonical choice of the pressure
given by Lemma~\ref{plem}. 

\subsection{The Leray-Hopf projection}
\label{ss2.3}

With the canonical choice of the pressure \eqref{pdef}, the 
Navier-Stokes equations \eqref{2DNS} can be written in the 
equivalent form
\begin{equation}\label{NS2}
  \partial_t u + \P(u\cdot\nabla)u \,=\, \nu\Delta u~, 
  \qquad \div u \,=\, 0~,
\end{equation}
where the Leray-Hopf projection $\P$ is the matrix-valued operator
defined by 
\[
  (\P u)_j \,=\, \sum_{k=1}^2 \P_{jk}u_k~, \qquad \hbox{with}\quad 
  \P_{jk} = \delta_{jk} + R_j R_k~.
\]
Indeed, using Einstein's summation convention over repeated indices, 
we have from \eqref{pdef}\:
\[
  \partial_j p \,=\, \partial_j R_k R_\ell (u_k u_\ell) \,=\, 
  R_k R_j \partial_\ell (u_k u_\ell) \,=\, R_j R_k (u_\ell 
  \partial_\ell u_k)~,
\]
hence
\[
  \partial_j p + u_\ell\partial_\ell u_j \,=\, \Bigl(\delta_{jk} 
  + R_j R_k\Bigr) (u_\ell \partial_\ell u_k)~. 
\]
This shows that $\nabla p + (u\cdot \nabla)u = \P(u\cdot \nabla)u$. 
In the calculations above, we have used the commutations relations 
$R_1 R_2 = R_2 R_1$, $\partial_j R_k = R_k \partial_j$, $\partial_j 
R_\ell \,=\, R_j \partial_\ell$, as well as the incompressibility 
condition $\div u = 0$. Symbolically, we may also write
\begin{equation}\label{Pident}
  \P(u\cdot\nabla)u \,=\, \nabla\cdot \P (u\otimes u)~.
\end{equation}

It is clear that the Leray-Hopf projection $\P$ is a bounded 
linear operator on $L^p(\R^2)^2$ for $1 < p < \infty$, and 
from $L^\infty(\R^2)^2$ into $\BMO(\R^2)^2$. For later use,
we also mention that $\nabla S(t)\P$ defines a bounded operator
on $X = C_\bu(\R^2)^2$ for any $t > 0$, where $S(t)$ is the heat 
semigroup defined by \eqref{heatkernel}. 

\begin{lemma}\label{nablaSP}
There exists a constant $C_0 > 0$ such that
\begin{equation}\label{nablaSPest}
  \|\nabla S(t)\P f\|_{L^\infty} \,\le\, \frac{C_0}{\sqrt{t}}
  \,\|f\|_{L^\infty}~, \qquad t > 0~, 
\end{equation}
for all $f \in X$. 
\end{lemma}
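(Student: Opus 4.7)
The plan is to realize $\nabla S(t)\P$ as convolution against a matrix-valued kernel $K_t$ on $\R^2$ and to show that $\|K_t\|_{L^1} \le C_0/\sqrt{t}$; the bound \eqref{nablaSPest} then follows at once from Young's convolution inequality $\|K_t \ast f\|_{L^\infty} \le \|K_t\|_{L^1}\|f\|_{L^\infty}$. A conceptual caveat must be addressed first: because $\P$ is unbounded on $L^\infty$, one cannot legitimately split the operator as $\nabla S(t)$ applied to $\P f$. I would instead construct $K_t$ directly and \emph{define} $\nabla S(t)\P f$ as $K_t \ast f$; this definition agrees with the naive composition on, say, Schwartz functions, and it automatically sends $X$ into $X$ since convolution of an $L^1$ kernel with a bounded uniformly continuous function preserves boundedness and uniform continuity.

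To build $K_t$ I would work on the Fourier side. By \eqref{Rieszdef}, the component $\partial_i S(t)\P_{jk}$ has symbol
\[
  m_t^{ijk}(\xi) \,=\, i\xi_i\Bigl(\delta_{jk} - \frac{\xi_j\xi_k}{|\xi|^2}\Bigr)\e^{-t|\xi|^2}~.
\]
The central trick is to absorb the Riesz denominator into a time integral via $\e^{-t|\xi|^2}/|\xi|^2 = \int_t^\infty \e^{-s|\xi|^2}\dd s$, after which a termwise Fourier inversion yields
\[
  K_t^{ijk}(x) \,=\, \delta_{jk}\,\partial_i G_t(x) + \int_t^\infty \partial_i\partial_j\partial_k G_s(x)\dd s~,
\]
where $G_s(x) = (4\pi s)^{-1}\e^{-|x|^2/(4s)}$ is the heat kernel of \eqref{heatkernel}. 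The point of this manoeuvre is that it replaces a non-integrable singular Fourier multiplier by an absolutely convergent time integral of derivatives of well-behaved Gaussians.

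The final step is purely dimensional. From $G_s(x) = s^{-1}G_1(x/\sqrt{s})$ one reads off $\|\partial^\alpha G_s\|_{L^1} = c_\alpha\,s^{-|\alpha|/2}$ for any multi-index $\alpha$, so applying this with $|\alpha|=1$ to the first term and with $|\alpha|=3$ together with Minkowski's inequality for integrals to the second gives
\[
  \|K_t^{ijk}\|_{L^1} \,\le\, c_1\,t^{-1/2} + \int_t^\infty c_3\,s^{-3/2}\dd s \,=\, (c_1 + 2c_3)\,t^{-1/2}~,
\]
which is the desired bound. I expect the main hurdle to be precisely the one just overcome: the multiplier $\xi_j\xi_k/|\xi|^2$ is \emph{not} the Fourier transform of an $L^1$ kernel on $\R^2$, so any direct estimate that fails to exploit the cancellation structure of the full symbol is doomed, and it is the passage through the time integral that makes the singularity tractable.
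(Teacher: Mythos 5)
Your proposal is correct and follows essentially the same route as the paper: both rest on the identity $\e^{-t|\xi|^2}/|\xi|^2 = \int_t^\infty \e^{-s|\xi|^2}\dd s$, which converts the symbol $i\xi_j(\delta_{k\ell} - \xi_k\xi_\ell/|\xi|^2)\e^{-t|\xi|^2}$ into $\delta_{k\ell}\,\partial_j S(t) + \int_t^\infty \partial_j\partial_k\partial_\ell S(\tau)\dd\tau$, after which the $t^{-1/2}$ bound follows from the Gaussian scaling $\|\partial^\alpha G_s\|_{L^1} = c_\alpha s^{-|\alpha|/2}$. The only cosmetic difference is that you phrase the last step as an $L^1$ bound on the kernel plus Young's inequality, while the paper invokes the semigroup smoothing estimates \eqref{heatder} directly -- which are themselves proved by exactly that kernel computation -- and your preliminary remark about defining $\nabla S(t)\P$ through the kernel (rather than composing with the $L^\infty$-unbounded operator $\P$) is a legitimate point of care that the paper handles implicitly via the same multiplier identity.
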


\begin{proof}
For any $t > 0$ and any choice of $j,k,\ell \in \{1,2\}$, the 
operator $\partial_j S(t)\P_{k\ell}$ is the Fourier multiplier
with symbol
\[
  i\xi_j\Bigl(\delta_{k\ell} - \frac{\xi_k\xi_\ell}{|\xi|^2}\Bigr)
  \e^{-t|\xi|^2} \,=\,  i\xi_j \delta_{k\ell}\e^{-t|\xi|^2} - 
  i\xi_j \xi_k \xi_\ell \int_t^\infty \e^{-\tau|\xi|^2}\dd \tau~,
  \qquad \xi \in \R^2\setminus\{0\}~.
\]
We thus have the following identity
\begin{equation}\label{nablaSPid}
  \partial_j S(t)\P_{k\ell}f \,=\, \delta_{k\ell} \partial_j S(t)f 
  + \int_t^\infty \partial_j \partial_k \partial_\ell S(\tau)f \dd \tau~,
\end{equation}
which holds in particular for any $f \in C_\bu(\R^2)$. Both terms
in the right-hand side of \eqref{nablaSPid} belong to $C_\bu(\R^2)$
and can be easily estimated using \eqref{heatder}\:
\begin{align*}
  \|\partial_j S(t)f\|_{L^\infty} \,&\le\, \frac{C}{\sqrt{t}}\, 
  \|f\|_{L^\infty}~, \\
  \Bigl\|\int_t^\infty \partial_j\partial_k\partial_\ell S(\tau)f \dd \tau
  \Bigr\|_{L^\infty} \,&\le\, C \int_t^\infty \frac{1}{\tau^{3/2}}\,
  \|f\|_{L^\infty} \dd\tau \,\le\, \frac{C}{\sqrt{t}}\, \|f\|_{L^\infty}~, 
\end{align*}
This immediately yields estimates \eqref{nablaSPest}. 
\end{proof}

\subsection{Local existence of solutions}
\label{ss2.4}

Let $u_0 \in X$ be such that $\div u_0 = 0$. We consider the integral 
equation associated with the Navier-Stokes equations \eqref{NS2}\:
\begin{equation}\label{NSint}
  u(t) \,=\, S(\nu t)u_0 - \int_0^t \nabla\cdot S(\nu(t-s)) 
  \,\P (u(s) \otimes u(s))\dd s~, \qquad t > 0~,
\end{equation}
where $S(t)$ is the heat semigroup \eqref{heatkernel}, and 
the notation $\P(u\otimes u)$ is explained in \eqref{Pident}. 
Here and in the sequel, the map $x \mapsto u(x,t)$ is simply 
denoted by $u(t)$ instead of $u(\cdot,t)$. The goal of this 
section is to prove that the integral equation \eqref{NSint}
has a unique local solution that it continuous in time with 
values in the space $X$ defined by \eqref{Xdef}. Such a solution 
of \eqref{NSint} is usually called a {\em mild solution} of 
the Navier-Stokes equations \eqref{2DNS} in $X$. 

\begin{proposition}\label{locex}
Fix $\nu > 0$. For any $M > 0$, there exists a time $T = T(M,\nu) > 0$ 
such that, for all initial data $u_0 \in X$ with $\div u_0 = 0$ and 
$\|u_0\|_{L^\infty}\le M$, the integral equation \eqref{NSint} 
has a unique local solution $u \in C^0([0,T],X)$, which 
moreover satisfies
\begin{equation}\label{locexbd} 
  \sup_{0 \le t \le T}\|u(t)\|_{L^\infty} \,\le\, 2M~, \qquad
  \hbox{and}\quad 
  \sup_{0 < t \le T}(\nu t)^{1/2}\|\nabla u(t)\|_{L^\infty}
  \,\le\, C M~,
\end{equation}
where $C > 0$ is a universal constant. In addition, the solution
$u \in C^0([0,T],X)$ depends continuously on the initial data 
$u_0 \in X$. 
\end{proposition}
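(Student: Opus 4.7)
The plan is a Banach fixed-point argument applied to the integral equation \eqref{NSint} in the Banach space $E_T := C^0([0,T], X)$ equipped with $\|u\|_{E_T} = \sup_{0\le t\le T}\|u(t)\|_{L^\infty}$. Set
\[
  \Phi[u](t) \,=\, S(\nu t)u_0 - \int_0^t \nabla\cdot S(\nu(t-s))\,\P\bigl(u(s)\otimes u(s)\bigr)\dd s,
\]
and seek a fixed point in the closed ball $B_T = \{u \in E_T : \|u\|_{E_T} \le 2M\}$. Property~1 of the heat semigroup gives $\|S(\nu t)u_0\|_{L^\infty} \le M$, while Lemma~\ref{nablaSP} applied component-wise to $u\otimes u$ yields
\[
  \|\Phi[u](t) - S(\nu t)u_0\|_{L^\infty} \,\le\, C_0 \int_0^t \frac{\|u(s)\|_{L^\infty}^2}{\sqrt{\nu(t-s)}}\dd s \,\le\, 8 C_0 M^2\sqrt{t/\nu},
\]
which is bounded by $M$ as soon as $T \le \nu/(8C_0M)^2$. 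The same bilinear estimate, combined with $u\otimes u - v\otimes v = u\otimes(u-v) + (u-v)\otimes v$, gives a Lipschitz constant $\le 1/2$ on $B_T$ after a further universal shrinking of $T$. Banach's theorem then produces the unique fixed point $u \in B_T$, and continuity of $t\mapsto u(t) \in X$ follows from the strong continuity of $S(\nu t)$ (property~3) together with dominated convergence for the integral term.

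The gradient bound is the heart of the proof. I differentiate \eqref{NSint} and split the time integral at $s = t/2$. The linear piece $\nabla S(\nu t)u_0$ is $O(M(\nu t)^{-1/2})$ by \eqref{heatder}. For $s \in [0,t/2]$, where $t-s \ge t/2$, I use the bound $\|\partial_i\partial_j S(\nu(t-s))\P f\|_{L^\infty} \le C(\nu(t-s))^{-1}\|f\|_{L^\infty}$, which is obtained by the same Fourier-multiplier decomposition as in Lemma~\ref{nablaSP} through the identity $|\xi|^{-2}e^{-\tau|\xi|^2} = \int_\tau^\infty e^{-\sigma|\xi|^2}\dd\sigma$; this contribution is bounded by $CM^2/\nu$. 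For $s \in [t/2,t]$, I exploit the divergence-free condition to rewrite $\nabla\cdot \P(u\otimes u) = \P(u\cdot\nabla)u$, so that only one derivative falls on the kernel; Lemma~\ref{nablaSP} with the bootstrap ansatz $\|\nabla u(s)\|_{L^\infty} \le N/\sqrt{\nu s}$ produces the Beta-type integral $\int_{t/2}^t ((t-s)s)^{-1/2}\dd s = O(1)$ and a contribution $\le C_1 MN/\nu$. Setting $N(t) = \sqrt{\nu t}\,\|\nabla u(t)\|_{L^\infty}$, one obtains
\[
  N(t) \,\le\, CM + C'M^2\sqrt{t/\nu} + C_1 M N(t)\sqrt{t/\nu},
\]
and the self-improving bound $N(t) \le 4CM$ holds uniformly on $(0,T]$ provided $T \le \nu/(K_1 M)^2$ for $K_1$ sufficiently large.

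The main obstacle is precisely this gradient estimate: the naive two-derivative bound on $S(\nu(t-s))\P$ produces a non-integrable singularity at $s=t$, and it is the combination of the splitting at $t/2$ with the algebraic identity $\nabla\cdot\P(u\otimes u) = \P(u\cdot\nabla)u$ that replaces a second derivative on the kernel by one derivative on $u$, turning a divergent integral into a Beta integral and permitting a bootstrap. Continuous dependence on the initial datum is then essentially a free by-product of the bilinear estimate: if $u,\tilde u$ solve \eqref{NSint} with data $u_0,\tilde u_0$ of norm $\le M$, subtracting the integral equations on the common interval $[0,T]$ gives $\|u-\tilde u\|_{E_T} \le \|u_0-\tilde u_0\|_{L^\infty} + \tfrac12 \|u-\tilde u\|_{E_T}$, hence $\|u-\tilde u\|_{E_T} \le 2\|u_0-\tilde u_0\|_{L^\infty}$, which is the desired Lipschitz estimate.
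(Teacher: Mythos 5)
Your existence, uniqueness-in-the-ball, and continuous-dependence arguments coincide with the paper's: Proposition~\ref{locex} is proved there by exactly your contraction scheme in $C^0([0,T],X)$, with the same ball of radius $2M$, the same bilinear estimate from Lemma~\ref{nablaSP}, and the same smallness condition $T \sim \nu/M^2$. Where you genuinely diverge is the gradient bound: the paper does not differentiate the integral equation a posteriori, but reruns the whole fixed-point argument in the smaller space $Z = \{u \in C^0([0,T],X) \,:\, t^{1/2}\nabla u \in C^0_b((0,T],X^2)\}$ with norm $\|u\|_Z = \sup_{0\le t\le T}\|u(t)\|_{L^\infty} + \sup_{0<t\le T}(\nu t)^{1/2}\|\nabla u(t)\|_{L^\infty}$, so that the second estimate in \eqref{locexbd} is built into the construction rather than extracted afterwards. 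Your individual estimates are sound: the second-order analogue of \eqref{nablaSPest} does follow from the same multiplier decomposition, and the identity \eqref{Pident} correctly trades a kernel derivative for one on $u$. Note in passing that your splitting at $s=t/2$ is unnecessary: once you write $\nabla\cdot\P(u\otimes u) = \P(u\cdot\nabla)u$, the Beta integral $\int_0^t \bigl((t-s)s\bigr)^{-1/2}\dd s = \pi$ converges on all of $[0,t]$, so the one-derivative bound of Lemma~\ref{nablaSP} suffices on the whole interval.

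Two points need repair. First, the bootstrap as written is circular unless you know beforehand that $N^*(t) := \sup_{0<s\le t}(\nu s)^{1/2}\|\nabla u(s)\|_{L^\infty}$ is finite and continuous, with control as $t \to 0^+$: your inequality (in which, moreover, the $N$ multiplying $C_1 M\sqrt{t/\nu}$ should be $N^*(t)$, not $N(t)$, since the ansatz is invoked for all $s \le t$) yields $N^*(t) \le 4CM$ only after dividing by $1 - C_1 M\sqrt{t/\nu}$, and an identically infinite $N^*$ satisfies the inequality vacuously. Differentiating \eqref{NSint} also presupposes that $\nabla u(s)$ exists for $s > 0$, which your fixed point in $E_T$ does not provide. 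The standard fixes are either to run the estimate on the Picard iterates, which have finite $Z$-norm by induction with constants uniform in $n$, and pass to the limit, or, as the paper does, to perform the contraction directly in $Z$. Second, the proposition asserts uniqueness in all of $C^0([0,T],X)$, while Banach's theorem gives it only within $B_T$; the paper closes this by a Gronwall argument applied to the difference of the constructed solution and an arbitrary solution in $C^0([0,T],X)$. Your continuous-dependence estimate itself is fine as stated, since both solutions produced by the scheme do lie in $B_T$.
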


\begin{proof}
Take $T > 0$ small enough so that 
\begin{equation}\label{Tdef}
  \kappa \,:=\, 8 C_0 M \frac{T^{1/2}}{\nu^{1/2}} \,<\, 1~,
\end{equation}
where $C_0 > 0$ is as in Lemma~\ref{nablaSP}. We introduce the Banach 
space $Y = C^0([0,T],X)$ equipped with the norm 
\[
  \|u\|_Y \,=\, \sup_{0 \le t \le T} \|u(t)\|_{L^\infty}~.
\]
Using Lemma~\ref{nablaSP} it is not difficult to verify that, if $u
\in Y$, the integral in the right-hand side of \eqref{NSint} is well
defined and depends continuously on time in the topology of
$X$. Moreover, if $u_0 \in X$, the results of Section~\ref{ss2.1} show
that the map $t \mapsto S(\nu t)u_0$ also belongs to $Y$. Thus, given
any $u_0 \in X$ with $\div u_0 = 0$ and $\|u_0\|_{L^\infty}\le M$, we
can consider the map $F : Y \to Y$ defined by
\begin{equation}\label{Fdef}
  (Fu)(t) \,=\, S(\nu t)u_0 - \int_0^t \nabla\cdot S(\nu(t-s)) 
  \,\P (u(s) \otimes u(s))\dd s~, \qquad t \in [0,T]~.
\end{equation}
Denoting $B = \{u \in Y\,|\, \|u\|_Y \le 2M\} \subset Y$, we 
claim that

\smallskip\noindent{\bf i)} {\em $F$ maps $B$ into itself.} 
Indeed, if $u \in B$, we find using \eqref{nablaSPest} and
\eqref{Tdef}
\begin{align*}
  \|(Fu)(t)\|_{L^\infty} \,&\le\, \|u_0\|_{L^\infty} + \int_0^t 
  \frac{C_0}{\sqrt{\nu(t-s)}}\,\|u(s)\|_{L^\infty}^2 \dd s \\ 
  \,&\le\, M + \|u\|_Y^2 \int_0^t \frac{C_0}{\sqrt{\nu \tau}}
  \dd \tau \,\le\, M + 8 M^2 C_0 (T/\nu)^{1/2} \le (1+\kappa)M~, 
\end{align*}
for any $t \in [0,T]$. As $\kappa < 1$, we deduce that 
$\|Fu\|_Y < 2M$, hence $Fu \in B$. 

\smallskip\noindent{\bf ii)} {\em $F$ is a strict contraction in $B$.}
Indeed, if $u,v \in B$, then
\[
  (Fu)(t) - (Fv)(t) \,=\, \int_0^t \nabla\cdot S(\nu(t-s)) 
  \,\P \Bigl( (v(s) \otimes v(s)) - (u(s) \otimes u(s))\Bigr)\dd s~,
\]
hence decomposing $v\otimes v - u\otimes u = v\otimes (v-u) + (v-u)
\otimes u$ and proceeding as above, we find
\begin{align*}
  \|(Fu)(t) - (Fv)(t)\|_{L^\infty} \,&\le\, \int_0^t 
  \frac{C_0}{\sqrt{\nu(t-s)}}\,\Bigl(\|v(s)\|_{L^\infty} + 
  \|u(s)\|_{L^\infty}\Bigr)\,\|u(s) - v(s)\|_{L^\infty}\dd s \\
  \,&\le\, 4 M \|u-v\|_Y \int_0^t \frac{C_0}{\sqrt{\nu \tau}}\dd \tau 
  \,\le\, 8 M C_0 (T/\nu)^{1/2} \|u-v\|_Y~,
\end{align*}
for all $t \in [0,T]$. Thus $\|Fu - Fv\|_Y \le \kappa \|u-v\|_Y$ 
where $\kappa < 1$ is as in \eqref{Tdef}. 

\smallskip\noindent By the Banach fixed point theorem, the map $F : Y
\to Y$ has a unique fixed point $u$ in $B$, which satisfies by
construction the integral equation \eqref{NSint} as well as the first
bound in \eqref{locexbd}.  If $\tilde u \in Y$ is another solution of
\eqref{NSint}, then applying Gronwall's lemma to the integral equation
satisfied by the difference $\tilde u - u$ it is easy to verify that
$\tilde u = u$, see \cite{He} and Section~\ref{A3}. Thus the solution
$u$ of \eqref{NSint} constructed by the fixed point argument above is
unique not only in the ball $B$, but also in the whole space $Y$. A
similar argument shows that the solution $u \in Y$ is a locally
Lipschitz function of the initial data $u_0 \in X$. Finally, the
simplest way to prove the second bound in \eqref{locexbd} is to repeat
the existence proof using the smaller function space
\[
  Z \,=\, \Bigl\{u \in C^0([0,T],X)\,\Big|\, t^{1/2}\nabla u
  \in C^0_b((0,T],X^2)\Bigr\}~,
\]
equipped with the norm
\[
  \|u\|_Z \,=\, \sup_{0 \le t \le T} \|u(t)\|_{L^\infty} + 
  \sup_{0 < t \le T}(\nu t)^{1/2}\|\nabla u(t)\|_{L^\infty}~.
\]
Proceeding as above one obtains the existence of a local solution
$u \in Z$ of \eqref{NSint} for a slightly smaller value of 
$T$, which is determined by a condition of the form \eqref{Tdef}
where $C_0$ is replaced by a larger constant. 
\end{proof}

\begin{remark}\label{localtime}
If $u_0 \neq 0$, the local existence time $T$ given by the 
proof of Proposition~\ref{locex} satisfies
\begin{equation}\label{loctime}
  T \,=\, \frac{C_1\nu}{\|u_0\|_{L^\infty}^2}~,
\end{equation}
where $C_1 > 0$ is a universal constant. This implies in particular 
that, if we consider the maximal solution $u \in C^0([0,T_*),X)$ 
of \eqref{NSint} in $X$, then either $T_* = \infty$, which means
that the solution is global, or $\|u(t)\|_{L^\infty} \to \infty$ 
as $t \to T_*$. More precisely, we must have $\|u(t)\|_{L^\infty}^2
> C_1 \nu (T_* - t)^{-1}$ for all $t \in [0,T_*)$. Note also that
estimate \eqref{smoothT} follows from \eqref{locexbd} and 
\eqref{loctime}. 
\end{remark}

\begin{remark}\label{classicsol}
Using standard parabolic smoothing estimates, it is not difficult 
to show that, if $u \in C^0([0,T],X)$ is the mild solution of 
\eqref{2DNS} constructed in Proposition~\ref{locex}, then 
$u(x,t)$ is a smooth function for $(x,t) \in \R^2 \times (0,T]$
which satisfies the Navier-Stokes equations \eqref{2DNS} in the 
classical sense, with the pressure $p(x,t)$ given by 
\eqref{pdef}, see \cite{GIM,GMS}. 
\end{remark}

\subsection{Global existence and a priori estimates}
\label{ss2.5}

Take $u_0 \in X$ such that $\div u_0 = 0$, and let $u \in
C^0([0,T_*),X)$ be the maximal solution of \eqref{2DNS} with initial
data $u_0$, the existence of which follows from
Proposition~\ref{locex}. In view of Remark~\ref{localtime}, to prove
that this solution is global (namely, $T_* = \infty$), it is sufficient
to show that the norm $\|u(t)\|_{L^\infty}$ cannot blow up in finite
time. The easiest way to do that is to consider the vorticity
distribution $\omega = \curl u = \partial_1 u_2 - \partial_2 u_1$,
which satisfies the advection-diffusion equation
\begin{equation}\label{omeq2}
  \partial_t \omega + u\cdot\nabla \omega \,=\, \nu\Delta \omega~.
\end{equation}
We know from Proposition~\ref{locex} that $\|\omega(t)\|_{L^\infty}
\le 2 \|\nabla u(t)\|_{L^\infty} < \infty$ for any $t \in (0,T_*)$,
hence shifting the origin of time we can assume without loss of
generality that $\omega_0 = \omega(\cdot,0) \in L^\infty(\R^2)$. Now,
the parabolic maximum principle \cite{PW} asserts that
$\|\omega(t)\|_{L^\infty}$ is a {\em nonincreasing function of time},
which gives the a priori estimate
\begin{equation}\label{ombound}
  \|\omega(t)\|_{L^\infty} \,\le\, \|\omega_0\|_{L^\infty}~, \qquad 
  \hbox{for all } t \ge 0~.
\end{equation}
Unfortunately, the bound \eqref{ombound} does not provide any direct
control on $\|u(t)\|_{L^\infty}$, because of the low frequencies which
are due to the fact that we work in an unbounded domain. In 
Fourier space, the relation between $\hat u = \FF u$ and 
$\hat \omega = \FF \omega$ takes the simple form
\begin{equation}\label{FourierBS}
  \hat u(\xi) \,=\, \frac{-i\xi^\perp}{|\xi|^2}\,\hat\omega(\xi)~,
  \qquad \xi \in \R^2\setminus\{0\}~, 
\end{equation}
where $\xi^\perp = (-\xi_2,\xi_1)$ if $\xi = (\xi_1,\xi_2) \in \R^2$. 
This shows that the first-order derivatives of the 
velocity field $u$ satisfy
$$
  \partial_1 u_1 = -\partial_2 u_2 \,=\, R_1 R_2 \,\omega~, 
  \qquad \partial_1 u_2 \,=\, -R_1^2 \,\omega~, \qquad
  \partial_2 u_1 \,=\, R_2^2 \,\omega~,
$$
where $R_1, R_2$ are the Riesz transforms \eqref{Rieszdef}.
In particular, we deduce the a priori estimate
\begin{equation}\label{nablaubound}
  \|\nabla u(t)\|_{\BMO} \,\le\,  C\|\omega(t)\|_{L^\infty} 
 \,\le\, C \|\omega_0\|_{L^\infty}~, \qquad 
  \hbox{for all } t \ge 0~.
\end{equation}
We refer to Section~\ref{A1} for a more detailed discussion 
of the Biot-Savart law in $\R^2$. 

To go further we observe that, since $\div u = 0$, we have the 
identity
\begin{equation}\label{nlinid}
  (u\cdot\nabla)u \,=\, \half \nabla |u|^2 + u^\perp \omega~,
\end{equation}
where $u^\perp = (-u_2,u_1)$ if $u = (u_1,u_2)$. We can thus write 
the Navier-Stokes equations \eqref{2DNS} in the equivalent form
\begin{equation}\label{2DNS2}
  \partial_t u +  u^\perp \omega \,=\, \nu\Delta u - \nabla q~, 
  \qquad \div u \,=\, 0~,
\end{equation}
where $q = p + \half |u|^2$. Applying the Leray-Hopf projection, 
we obtain the analog of \eqref{NS2}
\begin{equation}\label{NS3}
  \partial_t u + \P(u^\perp\omega) \,=\, \nu\Delta u~, 
  \qquad \div u \,=\, 0~.
\end{equation}
Since $\omega$ is under control, the nonlinear term $\P(u^\perp\omega)$
can be considered as a linear expression in the velocity field $u$, 
and this strongly suggests that the solutions of \eqref{NS3} 
should not grow faster than $\exp(C\|\omega_0\|_{L^\infty}t)$ as 
$t \to \infty$. The problem with this naive argument is that 
we cannot control $\|\P(u^\perp\omega)\|_{L^\infty}$ in terms 
of $\|u\|_{L^\infty}\|\omega\|_{L^\infty}$, because the Leray-Hopf
projection $\P$ is not continuous on $L^\infty(\R^2)^2$. This 
difficulty was solved in an elegant way by O.~Sawada and Y.~Taniuchi, 
who obtained the following result. 

\begin{proposition}\label{globex} {\bf \cite{ST}}
Assume that $u_0 \in X$, $\div u_0 = 0$, and $\omega_0 = \curl u_0 
\in L^\infty(\R^2)$. Then the Navier-Stokes equations \eqref{2DNS} 
have a unique global (mild) solution $u \in C^0([0,\infty),X)$ with 
initial data $u_0$. Moreover, we have the estimate
\begin{equation}\label{globexbd}
  \|u(t)\|_{L^\infty} \,\le\, C\|u_0\|_{L^\infty} 
  \exp\Bigl(C\|\omega\|_{L^\infty} t\Bigr)~, \qquad t \ge 0~,
\end{equation}
for some universal constant $C > 0$. 
\end{proposition}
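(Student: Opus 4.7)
The plan is to rule out finite-time blow-up of $\|u(t)\|_{L^\infty}$ so that the local solution from Proposition~\ref{locex} extends globally, and simultaneously extract the exponential bound \eqref{globexbd}. The essential new input compared with Section~\ref{ss2.4} is the a priori $L^\infty$ bound on the vorticity, which I will combine with the reformulated mild equation associated with \eqref{NS3}. By Remark~\ref{classicsol} the maximal solution $u \in C^0([0,T_*),X)$ is smooth on $\R^2\times(0,T_*)$, so $\omega(\cdot,t)\in L^\infty(\R^2)$ for every $t>0$ even without assuming $\omega_0\in L^\infty$; after an arbitrarily small time shift we may therefore assume $\omega_0 \in L^\infty(\R^2)$, and the parabolic maximum principle applied to \eqref{omeq2} yields $\|\omega(t)\|_{L^\infty}\le\Omega:=\|\omega_0\|_{L^\infty}$ for all $t\in[0,T_*)$.

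Starting from \eqref{NS3}, the mild formulation reads
\[
   u(t) \,=\, S(\nu t)u_0 \,-\, \int_0^t S(\nu(t-s))\,\P\!\bigl(u^\perp(s)\omega(s)\bigr)\dd s~.
\]
Here the integrand has the form ``bounded nonlinearity $\times$ $L^\infty$ factor'' rather than a derivative of $u\otimes u$, so the crucial quantity to estimate is $\|S(\tau)\P(u^\perp\omega)\|_{L^\infty}$. This is the step where the main obstacle lies: as discussed in Section~\ref{ss2.2}, the Leray-Hopf projector $\P$ involves Riesz transforms and is not bounded on $L^\infty(\R^2)^2$, so a naive bound by $\|u\|_{L^\infty}\,\Omega$ is forbidden. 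Following Sawada-Taniuchi, I would overcome this by a frequency splitting: pick a smooth cutoff $\hat\chi_\rho$ at scale $\rho>0$ and write $S(\tau)\P = S(\tau)\P\hat\chi_\rho + S(\tau)\P(1-\hat\chi_\rho)$. The high-frequency piece is the Fourier multiplier with symbol $(I-\xi\xi^\tsp/|\xi|^2)(1-\hat\chi_\rho(\xi))\e^{-\tau|\xi|^2}$, which is smooth and rapidly decaying and therefore has an $L^1$ kernel whose norm can be estimated explicitly; the low-frequency piece is smooth on a compact Fourier support and produces a kernel whose $L^1$ norm grows at worst logarithmically in $\rho$. Optimising $\rho$ in terms of $\tau$ and $\nu$ gives a bound of the form
\[
   \|S(\tau)\,\P (u^\perp \omega)(s)\|_{L^\infty} \,\le\, C\,\|u(s)\|_{L^\infty}\,\Omega\,\Phi(\tau,\nu,\Omega)~,
\]
where $\Phi$ is integrable in $\tau$ on bounded intervals.

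Substituting this into the mild formulation, using $\|S(\nu t)u_0\|_{L^\infty}\le\|u_0\|_{L^\infty}$ from Section~\ref{ss2.1}, and applying Gronwall's lemma (in the standard or Osgood form, depending on whether logarithmic corrections from the splitting are present) yields the exponential estimate
\[
   \|u(t)\|_{L^\infty} \,\le\, C\,\|u_0\|_{L^\infty}\,\exp(C\,\Omega\, t)~, \qquad t \in [0,T_*)~,
\]
which is \eqref{globexbd}. Since the right-hand side is finite for every finite $t$, the blow-up criterion of Remark~\ref{localtime} forces $T_* = +\infty$, establishing global existence. Uniqueness is already built into Proposition~\ref{locex}. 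The main technical obstacle is, as indicated, the construction of the refined frequency-split estimate for $S(\tau)\P$: controlling the low-frequency contribution without loss, despite the Riesz singularity, is the heart of the Sawada-Taniuchi argument and the only place where genuinely new analysis beyond Section~\ref{ss2.4} is required.
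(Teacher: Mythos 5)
Your scaffolding (maximum principle for $\omega$, blow-up criterion of Remark~\ref{localtime}, Gronwall at the end) matches the paper, but the central analytic claim on which your Duhamel estimate rests is false, and it is exactly the point the paper identifies as the obstruction. Your low-frequency piece $S(\tau)\,\P\,\hat\chi_\rho$ is not bounded on $L^\infty(\R^2)^2$ at all, let alone with only a logarithmic loss in $\rho$: the Leray symbol $\delta_{jk}-\xi_j\xi_k/|\xi|^2$ is homogeneous of degree zero and discontinuous at $\xi=0$, and multiplying it by $\hat\chi(\xi/\rho)\,\e^{-\tau|\xi|^2}$ (both smooth and equal to $1$ at the origin) leaves that singularity intact; the corresponding kernel therefore has a tail of size $|x|^{-2}$, which is not integrable in $\R^2$. (Your high-frequency piece is fine, up to a harmless logarithm in $\tau$.) Consequently there is no estimate of the form $\|S(\tau)\P(u^\perp\omega)\|_{L^\infty}\le C\,\|u\|_{L^\infty}\|\omega\|_{L^\infty}\,\Phi(\tau)$ with $\Phi$ locally integrable, for general bounded inputs: if there were, the ``naive argument'' described just before Proposition~\ref{globex} would go through verbatim, and the paper explains that it is blocked precisely because $\P$ is not continuous on $L^\infty(\R^2)^2$. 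The only way to make the Duhamel integrand $L^\infty$-bounded is to exploit the identity $\P(u^\perp\omega)=\nabla\cdot\P(u\otimes u)$ and let the derivative regularize the symbol as in Lemma~\ref{nablaSP}, but then the factor $\omega$ is lost and one only recovers a quadratic bound in $\|u\|_{L^\infty}$, i.e.\ at best the double-exponential estimate of \cite{GMS}, not \eqref{globexbd}.

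The actual Sawada--Taniuchi device, as implemented in the paper, splits the \emph{solution} rather than the \emph{operator}, and your proposal is missing its key ingredient: the Biot--Savart law. For the low frequencies one applies $Q_\delta$ to the integral equation \eqref{NSint} in divergence form; the multiplier $Q_\delta\nabla\P$ has symbol $i\xi_j\xi_k\xi_\ell|\xi|^{-2}\hat\chi(\xi/\delta)$, homogeneous of degree \emph{one} near the origin, so its kernel decays like $|x|^{-3}$, lies in $L^1(\R^2)$, and has norm $C\delta$ (Lemma~\ref{Qest}); no vorticity is used here, and the smallness comes from the factor $\delta$. For the high frequencies one bypasses Duhamel entirely: by \eqref{FourierBS}, $(\1-Q_\delta)u(t)$ is a Fourier multiplier acting on $\omega(t)$ with $L^1$ kernel of norm $C\delta^{-1}$, whence $\|(\1-Q_\delta)u(t)\|_{L^\infty}\le C\delta^{-1}\|\omega_0\|_{L^\infty}$ by the maximum principle. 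Choosing $\delta$ depending on $t$ and on the solution itself, namely $\delta=\|\omega_0\|_{L^\infty}^{1/2}\bigl(\int_0^t\|u(s)\|_{L^\infty}^2\dd s\bigr)^{-1/2}$ (not a threshold optimized against $\tau$ and $\nu$ as you suggest), gives $\|u(t)\|_{L^\infty}\le C\|u_0\|_{L^\infty}+2C\|\omega_0\|_{L^\infty}^{1/2}\bigl(\int_0^t\|u(s)\|_{L^\infty}^2\dd s\bigr)^{1/2}$, and squaring plus Gronwall yields \eqref{globexbd} and hence $T_*=\infty$. To repair your proof you would need to replace your low-frequency operator bound by this pairing of the divergence-form gain (low frequencies) with the vorticity control of high frequencies via Biot--Savart.
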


The proof of Proposition~\ref{globex} relies on a clever 
Fourier-splitting argument which we now describe. Let 
$\hat\chi : \R^2 \to \R$ be a smooth function such that
\[
  \hat\chi(\xi) \,=\, \begin{cases} 1 & \hbox{if} \quad|\xi| \le 1\,, \\
  0 & \hbox{if} \quad|\xi| \ge 2\,. \end{cases}
\]
We further assume that $\chi$ is radially symmetric and nonincreasing
along rays. Let $\chi = \FF^{-1}\hat\chi$ be the inverse Fourier 
transform of $\hat \chi$, so that $\chi \in \SS(\R^2)$. Given any
$\delta > 0$, we denote by $Q_\delta$ the Fourier multiplier 
with symbol $\hat\chi(\xi/\delta)$\: 
\begin{equation}\label{Qdef}
  (\widehat{Q_\delta f})(\xi) \,=\, \hat\chi(\xi/\delta) \hat 
  f(\xi)~, \qquad \xi \in \R^2~.
\end{equation}
It is clear that $Q_\delta$ is a bounded linear operator on 
$\SS'(\R^2)$. 

\begin{lemma}\label{Qest}
There exists a constant $C_2 > 0$ such that the following 
bounds hold for any $\delta > 0$. 

\smallskip\noindent\quad
{\bf 1.} $\|Q_\delta f\|_{L^\infty} \le C_2 \|f\|_{L^\infty}$, for any 
$f \in C_\bu(\R^2)$; 

\smallskip\noindent\quad
{\bf 2.} $\|Q_\delta \nabla \,\P f\|_{L^\infty} \le C_2\delta 
\|f\|_{L^\infty}$, for any $f \in C_\bu(\R^2)^2$; 

\smallskip\noindent\quad
{\bf 3.} $\|(\1 - Q_\delta) u\|_{L^\infty} \le C_2\delta^{-1} 
\|\omega\|_{L^\infty}$, for any $u \in X$ with $\div u = 0$ 
and $\curl u = \omega$.  
\end{lemma}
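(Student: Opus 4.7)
All three bounds fit a common scheme: I express each operator as a Fourier multiplier with symbol of the form $c(\delta)\,m(\xi/\delta)$, and observe that its convolution kernel is $c(\delta)\delta^2 \check m(\delta x)$, of $L^1$-norm $|c(\delta)|\,\|\check m\|_{L^1}$, so that Young's inequality reduces matters to showing $\check m\in L^1(\R^2)$ for a fixed, $\delta$-independent symbol $m$. Since $Q_\delta$ itself is convolution with $\delta^2\chi(\delta x)$, of $L^1$-norm $\|\chi\|_{L^1}$ (finite because $\chi\in\SS(\R^2)$), Part~1 is immediate.

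For Part~2, I decompose $\partial_i\P_{jk} = \delta_{jk}\partial_i + \partial_i R_j R_k$. The pure derivative piece $Q_\delta\partial_i$ is convolution with $\delta^3(\partial_i\chi)(\delta x)$, of $L^1$-norm $\delta\|\partial_i\chi\|_{L^1}$, already of the right size. For the Riesz piece I mimic Lemma~\ref{nablaSP}: using $R_j R_k = \partial_j\partial_k(-\Delta)^{-1}$ together with the heat-kernel representation $(-\Delta)^{-1} = \int_0^\infty S(\tau)\dd\tau$, one has (first on Schwartz functions and then, by density and the bounds below, on every $f\in X$)
$$
  Q_\delta \partial_i R_j R_k f \,=\, \int_0^\infty Q_\delta \partial_i\partial_j\partial_k S(\tau) f \dd\tau,
$$
which I split at $\tau=\delta^{-2}$. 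For $\tau\le\delta^{-2}$, the symbol $\hat\chi(\xi/\delta)(-i\xi_i\xi_j\xi_k)\e^{-\tau|\xi|^2}$ rescales to $\delta^3$ times a Schwartz function of $\xi/\delta$ whose inverse Fourier transform has $L^1$-norm bounded uniformly in $\tau\ge 0$, so the integrand is at most $C\delta^3\|f\|_{L^\infty}$; for $\tau\ge\delta^{-2}$, the smoothing estimate \eqref{heatder} together with Part~1 gives $C\tau^{-3/2}\|f\|_{L^\infty}$. Each regime integrates to $\OO(\delta)\|f\|_{L^\infty}$.

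For Part~3, I combine the Biot-Savart identity \eqref{FourierBS} with the same scaling trick. Setting $\tilde\Psi(\eta)=(1-\hat\chi(\eta))(-i\eta^\perp)/|\eta|^2$, the operator $(\1-Q_\delta)$ applied to $u$ is the Fourier multiplier with symbol $\delta^{-1}\tilde\Psi(\xi/\delta)$ applied to $\omega$, and its convolution kernel has $L^1$-norm $\delta^{-1}\|\check{\tilde\Psi}\|_{L^1}$; it thus suffices to verify $\check{\tilde\Psi}\in L^1(\R^2)$. Since $1-\hat\chi$ vanishes in a neighborhood of the origin, $\tilde\Psi\in C^\infty(\R^2)$, with $|\partial^\alpha\tilde\Psi(\eta)|\le C_\alpha|\eta|^{-1-|\alpha|}$ at infinity, so $\partial^\alpha\tilde\Psi\in L^1(\R^2)$ for every $|\alpha|\ge 2$. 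Integration by parts then yields $|\check{\tilde\Psi}(x)|\le C(1+|x|)^{-3}$, which is integrable at infinity in $\R^2$. Near the origin, the slow $|\eta|^{-1}$ decay of $\tilde\Psi$ at infinity is reflected as a $|x|^{-1}$-type singularity in $\check{\tilde\Psi}$, of the same form as the Biot-Savart kernel itself, and is therefore locally integrable in dimension two.

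The main obstacle is precisely this last point: the borderline $|x|^{-1}$ behavior of $\check{\tilde\Psi}$ at the origin is what forces the two-dimensional setting, while the decay at infinity is soft (integration by parts after the $\delta$-scale smoothing). The rest of the argument is routine: the scaling identity moves everything to a $\delta$-free problem, and the heat-kernel trick handles the Riesz contribution despite the failure of $L^\infty$ continuity of the Riesz transforms themselves.
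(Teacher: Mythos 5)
Your proof is correct, and the interesting divergence is in Part~2. Parts~1 and~3 essentially coincide with the paper's argument: Part~1 is identical, and in Part~3 you perform the same scaling reduction to the fixed, $\delta$-free symbol (your $\tilde\Psi$ is exactly the paper's $\hat\phi$ in \eqref{hatphidef}); near the origin both arguments subtract the Biot-Savart kernel (the paper writes $\phi(x) = \frac{1}{2\pi}\frac{x^\perp}{|x|^2} + \Phi(x)$ with $\Phi$ smooth, since $\hat\chi(\eta)\,(-i\eta^\perp)/|\eta|^2$ is compactly supported and integrable, hence has an entire inverse transform --- this is the precise version of your ``reflected singularity'' remark), and at infinity you differ only cosmetically: you integrate by parts three times to obtain $|x|^3\,|\check{\tilde\Psi}(x)| \le C$, whereas the paper applies $\Delta_\xi$ to the symbol, deduces $|x|^2\phi \in L^2$, and concludes by Cauchy--Schwarz. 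In Part~2, however, the paper rescales the full symbol at once, writing $m(\xi) = \delta\,\hat\psi(\xi/\delta)$ with $\hat\psi(\xi) = i\xi_j\xi_k\xi_\ell\,\hat\chi(\xi)/|\xi|^2$, and proves $\psi \in L^1$ directly from the logarithmic-potential representation together with the decay $|\psi(x)| \le C|x|^{-3}$; you instead represent $(-\Delta)^{-1} = \int_0^\infty S(\tau)\dd\tau$ and split at $\tau = \delta^{-2}$, which transplants the mechanism of Lemma~\ref{nablaSP} with the frequency cutoff $Q_\delta$ dictating the splitting time. Your route buys uniformity of technique --- one heat-kernel splitting proves both Lemma~\ref{nablaSP} and Part~2, with no explicit computation of the kernel $\psi$ --- while the paper's route is a one-shot $L^1$ kernel bound.

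Three slips, all repairable. First, ``by density'' is not literally available: Schwartz functions are not dense in $X = C_\bu(\R^2)$ for the uniform norm (constants already defeat this). The correct patch is the one implicit in the paper's treatment of \eqref{nablaSPid}: your splitting in fact shows that the total kernel $\int_0^\infty k_\tau \dd\tau$ is an $L^1$ function of norm $\OO(\delta)$; the kernel identity is verified against Schwartz test functions, and convolution with an $L^1$ kernel is then automatically bounded on all of $L^\infty$. Second, the uniform $L^1$ bound on the rescaled small-time kernel should be claimed only for $\tau\delta^2 \in [0,1]$ --- which is all you use --- not ``uniformly in $\tau \ge 0$'': the $\eta$-derivatives of $\e^{-s|\eta|^2}$ grow with $s$ on the support of $\hat\chi$. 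Third, the stated bound $|\check{\tilde\Psi}(x)| \le C(1+|x|)^{-3}$ cannot hold literally, since you correctly note a $|x|^{-1}$ singularity at the origin in the very next sentence; integration by parts yields only $|\check{\tilde\Psi}(x)| \le C|x|^{-3}$, which is exactly what integrability at infinity requires.
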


\begin{proof}
The first estimate follows immediately from Young's inequality (see
Section~\ref{A3}), because $Q_\delta$ is the convolution operator with
the integrable function $x \mapsto \delta^2 \chi(\delta x)$, the $L^1$
norm of which does not depend on $\delta$. To prove the second
estimate we have to show that, for any $j,k,\ell \in \{1,2\}$, the
Fourier multiplier $M$ with symbol
\[
  m(\xi) \,=\, \frac{i\xi_j \xi_k \xi_\ell}{|\xi|^2} 
  \,\hat\chi(\xi/\delta)~, \qquad \xi \in \R^2\setminus\{0\}~,
\]
is continuous on $L^\infty(\R^2)$ with operator norm bounded 
by $C \delta$. We observe that
\[
  m(\xi) \,=\, \delta \hat \psi(\xi/\delta)~, \qquad \hbox{where}
  \quad \hat \psi(\xi) \,=\, \frac{i\xi_j \xi_k \xi_\ell}{|\xi|^2} 
  \,\hat\chi(\xi)~.
\]
It follows that $Mf = \psi_\delta * f$, where $\psi_\delta(x) =
\delta^3 \psi(\delta x)$ and $\psi = \FF^{-1}\hat \psi$. It 
is clear that $\psi \in C^\infty(\R^2)$ (because $\hat\psi$ 
has compact support), and from the explicit formula
\[
  \psi(x) \,=\, \frac{1}{2\pi}\,\partial_j \partial_k \partial_\ell
  \int_{\R^2} \log(|x-y|) \chi(y)\dd y~, \qquad x \in \R^2~,
\]
it is straightforward to verify that $|\psi(x)| \le C|x|^{-3}$ 
for $|x| \ge 1$. Thus $\psi \in L^1(\R^2)$, and using Young's
inequality we conclude that $\|Mf\|_{L^\infty} \le \|\psi_\delta\|_{L^1} 
\|f\|_{L^\infty} = \delta \|\psi\|_{L^1} \|f\|_{L^\infty}$, which 
is the desired result. 

Finally, to prove the third estimate in Lemma~\ref{Qest}, 
we use formula~\eqref{FourierBS} to derive the relation
\[
  \hat u(\xi) - \widehat{Q_\delta u}(\xi) \,=\, \Bigl(1 - \hat\chi
  (\xi/\delta)\Bigr) \frac{-i\xi^\perp}{|\xi|^2}\,\hat\omega(\xi) 
  \,=\, \frac{1}{\delta}\,\hat\phi(\xi/\delta)\hat \omega(\xi)~, 
\]
where 
\begin{equation}\label{hatphidef}
  \hat\phi(\xi) \,=\, \Bigl(1 - \hat\chi(\xi)\Bigr)
  \frac{-i\xi^\perp}{|\xi|^2}~, \qquad \xi \in \R^2\setminus\{0\}~. 
\end{equation}
As before, if $\phi = \FF^{-1}\hat \phi$, this implies that 
$\|(\1 -Q_\delta) u\|_{L^\infty} \le \delta^{-1} \|\phi\|_{L^1} 
\|\omega\|_{L^\infty}$, so we only need to verify that $\phi \in
L^1(\R^2)$. Since $\hat\chi$ has compact support in $\R^2$, it
follows from \eqref{hatphidef} that
\[
  \phi(x) \,=\, \frac{1}{2\pi} \frac{x^\perp}{|x|^2} + \Phi(x)~, 
  \qquad x \in \R^2\setminus\{0\}~, 
\]
where $\Phi : \R^2 \to \R$ is smooth, hence $\phi$ is integrable on
any bounded neighborhood of the origin. On the other hand, if we apply
the Laplacian $\Delta_\xi$ to both sides of \eqref{hatphidef}, the
resulting expression belongs to $L^2(\R^2,\D\xi)$. This shows that
$|x|^2\phi \in L^2(\R^2,\D x)$, hence $\phi$ is integrable on the
complement of any neighborhood of the origin. Thus altogether $\phi
\in L^1(\R^2)$, which is the desired result.
\end{proof}

\noindent{\it Proof of Proposition~\ref{globex}.}
Let $u \in C^0([0,T_*),X)$ be the maximal solution of \eqref{2DNS}
with initial data $u_0$. Without loss of generality, we assume
that $u_0 \not\equiv 0$, and we fix $t \in (0,T_*)$. The idea is to
control the low frequencies $|\xi| \le 2\delta$ in the solution $u(t)$
using the integral equation \eqref{NSint}, and the high frequencies
$|\xi| \ge \delta$ using the third estimate in Lemma~\ref{Qest} 
together with the a priori bound on the vorticity. The threshold 
frequency $\delta$ will depend on time and on the solution itself. 

Given any $\delta > 0$, we apply the Fourier multiplier $Q_\delta$
defined in \eqref{Qdef} to the integral equation \eqref{NSint} 
and obtain
\[
  Q_\delta u(t) \,=\, S(\nu t)Q_\delta u_0 - \int_0^t S(\nu(t-s)) 
  Q_\delta \nabla\cdot \P (u(s)\otimes u(s))\dd s~,
\]
where we have used the fact that $Q_\delta$ commutes with the heat 
semigroup $S(t)$. Using the first two estimates in Lemma~\ref{Qest}, 
we thus find
\[
  \|Q_\delta u(t)\|_{L^\infty} \,\le\,  C_2 \|u_0\|_{L^\infty} + 
  C_2 \delta \int_0^t \|u(s)\|_{L^\infty}^2\dd s~.
\]
On the other hand, the third estimate in Lemma~\ref{Qest} 
implies that
\[
  \|(\1 - Q_\delta)u(t)\|_{L^\infty} \,\le\, C_2 \delta^{-1} 
  \|\omega(t)\|_{L^\infty} \,\le\, C_2 \delta^{-1} \|\omega_0\|_{L^\infty}~. 
\]
This bound shows how the high frequencies in the velocity field
$u(t)$ can be controlled in terms of the vorticity. Combining both 
results, we find
\[
  \|u(t)\|_{L^\infty} \,\le\, C_2 \|u_0\|_{L^\infty} + C_2 \delta 
  \int_0^t \|u(s)\|_{L^\infty}^2\dd s + C_2 \delta^{-1} 
  \|\omega_0\|_{L^\infty}~.
\]
If we now choose
\[
  \delta \,=\, \|\omega_0\|_{L^\infty}^{1/2} \,\left(\int_0^t \|u(s)
  \|_{L^\infty}^2\dd s\right)^{-1/2}~,
\]
we obtain the bound
\begin{equation}\label{ufirstbound}
  \|u(t)\|_{L^\infty} \,\le\, C_2 \|u_0\|_{L^\infty} + 2C_2 
  \|\omega_0\|_{L^\infty}^{1/2} \,\left(\int_0^t \|u(s)
  \|_{L^\infty}^2\dd s\right)^{1/2}~, 
\end{equation}
which holds for any $t \in (0,T_*)$. Finally, squaring 
both sides of \eqref{ufirstbound} and applying Gronwall's
lemma (see Section~\ref{A3}), we arrive at the inequality
\begin{equation}\label{usecondbound}
  \|u(t)\|_{L^\infty}^2 \,\le\, 2C_2^2 \|u_0\|_{L^\infty}^2
  \,\exp\Bigl(8C_2^2  \|\omega_0\|_{L^\infty}t\Bigr)~, \qquad
  t \in (0,T_*)~,
\end{equation}
which shows that the norm $\|u(t)\|_{L^\infty}$ cannot blow 
up in finite time. Thus $T_* = \infty$, and estimate 
\eqref{usecondbound} holds for all $t > 0$. \QED

\begin{remark}
Theorem~\ref{thm1} is an immediate consequence of 
Propositions~\ref{locex} and \ref{globex}. 
\end{remark}

\section{Uniformly local energy estimates}
\label{s3}

In the study of nonlinear partial differential equations on unbounded
spatial domains, if one considers solutions that do not decay to zero
at infinity, it is not always convenient to use function spaces based
on the uniform norm $\|\cdot\|_\infty$, because those spaces may not
take into account some essential properties of the system, such as
locally conserved or locally dissipated quantities. From this point of
view, the larger family of {\em uniformly local Lebesgue spaces}
offers an interesting compromise between simplicity and flexibility. 
In the analysis of evolution PDE's, uniformly local spaces were 
introduced by T.~Kato in 1975 \cite{Ka}, and subsequently used by 
many authors, see \cite{ARCD,EZ,Fe,GS0,GV,MT,MS,Ze} for a few 
examples.

The following two sections are largely independent of the rest of
these notes. The first one provides the definition and the main
properties of the uniformly local Lebesgue spaces, including various
characterizations of their norm. In the subsequent section, we
consider the simple example of the linear heat equation on $\R^d$ and
show how the solutions can be controlled using uniformly local energy
estimates. These preliminaries are useful to understand the general
philosophy of our approach, but are not necessary to follow the 
proof of our main results. The impatient reader should jump 
directly to Section~\ref{ss3.3}. 

\subsection{Uniformly local Lebesgue spaces}
\label{ss3.1}

Let $d \in \N^*$ and $1 \le p < \infty$. We introduce the space 
$\LL^p_\ul(\R^d)$ defined by
\begin{equation}\label{LLpuldef}
  \LL^p_\ul(\R^d) \,=\, \Bigl\{f \in L^p_\loc(\R^d)\,\Big|\,
  \|f\|_{L^p_\ul} < \infty\Bigr\}~,
\end{equation}
where
\begin{equation}\label{ulnorm}
  \|f\|_{L^p_\ul} \,=\, \sup_{x \in \R^d}\left(\int_{|y-x|\le 1}
  |f(y)|^p \dd y\right)^{1/p}~.
\end{equation}
In other words, a function $f$ belongs to $\LL^p_\ul(\R^d)$ if and
only if $f \in L^p(B_x)$ for any $x \in \R^d$, where $B_x \subset
\R^d$ denotes the ball of unit radius centered at $x$, and if moreover
the norm $\|f\|_{L^p(B_x)}$ is uniformly bounded for all $x \in
\R^2$. Roughly speaking, a function $f \in \LL^p_\ul(\R^d)$ is locally
in $L^p$ but behaves at large scales like a bounded function.

The {\em uniformly local $L^p$ space} $L^p_\ul(\R^d)$ is the subspace 
of $\LL^p_\ul(\R^d)$ defined by
\begin{equation}\label{Lpuldef}
  L^p_\ul(\R^d) \,=\, \Bigl\{f \in \LL^p_\ul(\R^d) \,\Big|\,
  \|\tau_y f - f\|_{L^p_\ul} \xrightarrow[y \to 0]{} 0\Bigr\}~,
\end{equation}
where $\tau_y$ denotes the translation operator\: $(\tau_y f)(x) = 
f(x-y)$ for $x,y \in \R^d$. The following properties are 
well known \cite{ARCD}\:

\begin{enumerate}

\item The space $\LL^p_\ul(\R^d)$ equipped with the norm \eqref{ulnorm}
is a Banach space, which contains $L^p_\ul(\R^d)$ as a closed subspace. 
In fact $L^p_\ul(\R^d)$ is the closure of $C_\bu(\R^d)$ in 
$\LL^p_\ul(\R^d)$, so that $C_\bu(\R^d)$ and even $C^\infty_\bu(\R^d)$ 
are dense in $L^p_\ul(\R^d)$. 

\item If $p = \infty$ the norm \eqref{ulnorm} should be understood 
as the uniform norm over $\R^d$. We thus have $\LL^\infty_\ul(\R^d)
= L^\infty(\R^d)$ and the definition \eqref{Lpuldef} shows that
$L^\infty_\ul(\R^d) = C_\bu(\R^d)$.  

\item For any $p \in [1,\infty]$ one has $L^p_\ul(\R^d) \neq 
\LL^p_\ul(\R^d)$. For instance, if $f(x) = \sin(|x|^2)$, it is 
easy to verify that $f \in \LL^p_\ul(\R^d)\setminus L^p_\ul(\R^d)$. 
Such a function cannot be approximated by uniformly continuous
functions in the topology defined by the norm \eqref{ulnorm}. 

\item As a Banach space, $L^p_\ul(\R^d)$ is neither reflexive nor
separable. 

\item If $1 \le p \le q \le \infty$ one has the embeddings
$C_\bu(\R^d) \hookrightarrow   L^q_\ul(\R^d)\hookrightarrow  
L^p_\ul(\R^d) \hookrightarrow  L^1_\ul(\R^d)$.  

\end{enumerate}

\noindent
Uniformly local Sobolev spaces can be constructed in a similar 
way. For instance one can define $W^{1,p}_\ul(\R^d)$ as the space 
of all $f \in L^p_\ul(\R^d)$ such that the distributional derivatives 
$\partial_i f$ belong to $L^p_\ul(\R^d)$ for $i = 1,\dots,d$. 

Uniformly local Lebesgue spaces provide a convenient framework 
for solving evolution PDE's. As a simple example, we consider the 
linear heat equation $\partial_t u = \Delta u$ in $\R^d$. 
The solution with initial data $u_0 \in L^p_\ul(\R^d)$ is 
$u(t) = S(t)u_0$, where $S(0) = \1$ and 
\begin{equation}\label{heatkernel2}
  (S(t)u_0)(x) \,=\, \frac{1}{(4\pi t)^{d/2}} \int_{\R^d} 
  \e^{-\frac{|x-y|^2}{4t}} u_0(y) \dd y~, \qquad x \in \R^d~, \quad t > 0~.
\end{equation}
The following result is not difficult to establish \cite{ARCD}\:

\begin{proposition}\label{heatul}
The family $\{S(t)\}_{t\ge 0}$ given by \eqref{heatkernel2} defines
a strongly continuous semigroup on $L^p_\ul(\R^d)$ for 
$1 \le p \le \infty$. Moreover, if $1 \le p \le q \le 
\infty$ one has the estimate
\begin{equation}\label{heatulest}
  \|S(t)u_0\|_{L^q_\ul} \,\le\, C \Bigl(1 + t^{-\frac{d}{2}(\frac1p-\frac1q)}
  \Bigr)\|u_0\|_{L^p_\ul}~, \qquad t > 0~.
\end{equation} 
\end{proposition}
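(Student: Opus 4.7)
The plan is to establish the smoothing bound \eqref{heatulest} first, and then deduce strong continuity from it. The semigroup identity $S(t_1+t_2) = S(t_1)S(t_2)$ is a routine consequence of the Gaussian convolution identity recorded in Section~\ref{ss2.1}, and needs no further comment. In what follows I write $G_t(z) = (4\pi t)^{-d/2}\e^{-|z|^2/(4t)}$ for the heat kernel, so that $S(t)f = G_t * f$.

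The key observation for \eqref{heatulest} is Jensen's inequality: since $G_t(x-y)\dd y$ is a probability measure on $\R^d$, one has the pointwise bound $|S(t)u_0(x)|^p \le (S(t)|u_0|^p)(x)$ for every $p \in [1,\infty)$. It therefore suffices to prove the two extremal estimates
\begin{equation}\label{planaux}
  \|S(t)v\|_{L^\infty} \,\le\, C(1+t^{-d/2})\|v\|_{L^1_\ul}~, \qquad
  \|S(t)v\|_{L^1_\ul} \,\le\, C\|v\|_{L^1_\ul}~,
\end{equation}
for nonnegative $v \in L^1_\ul(\R^d)$. Applying the first with $v = |u_0|^p$ yields $\|S(t)u_0\|_{L^\infty} \le C(1+t^{-d/(2p)})\|u_0\|_{L^p_\ul}$, while the second gives $\|S(t)u_0\|_{L^p_\ul} \le C\|u_0\|_{L^p_\ul}$. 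For intermediate $q \in [p,\infty]$ I would combine these via the elementary interpolation $\|f\|_{L^q(B_{x_0})}^q \le \|f\|_{L^\infty}^{q-p}\|f\|_{L^p(B_{x_0})}^p$ and the subadditivity bound $(1+a)^\theta \le 1+a^\theta$ with $\theta = (q-p)/q \in (0,1]$; together these produce exactly the exponent $(d/2)(1/p-1/q)$ demanded by \eqref{heatulest}.

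To prove \eqref{planaux}, I would decompose $\R^d$ into a family of unit cubes $\{Q_k\}_{k \in \Z^d}$ centred at the lattice points, for which $\|v\|_{L^1(Q_k)} \le C\|v\|_{L^1_\ul}$ holds uniformly in $k$. For the first inequality, at any $x \in \R^d$,
\[
  (S(t)v)(x) \,=\, \sum_{k \in \Z^d}\int_{Q_k} G_t(x-y) v(y)\dd y
  \,\le\, C\|v\|_{L^1_\ul}\sum_{k \in \Z^d}\sup_{y \in Q_k} G_t(x-y)~,
\]
and the last sum is dominated by the cube containing $x$ for $t \le 1$ (yielding $Ct^{-d/2}$) and by a Riemann-sum comparison with $\int_{\R^d}G_t\dd y = 1$ for $t \ge 1$ (yielding $C$), hence is bounded by $C(1+t^{-d/2})$ overall. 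For the second inequality, Fubini gives $\int_{B_{x_0}} (S(t)v)(x)\dd x = \int_{\R^d} v(y) K_t(y,x_0)\dd y$ with kernel $K_t(y,x_0) = \int_{B_{x_0}} G_t(x-y)\dd x$; the crucial point is that $K_t(y,x_0) \le \min(1,|B_{x_0}|\|G_t\|_{L^\infty})$ holds pointwise while $K_t$ still decays like a Gaussian for $|y-x_0|$ large, so the same cube decomposition yields $\sum_k \sup_{Q_k} K_t(\cdot,x_0) \le C$ uniformly in $t > 0$, this time without any $t^{-d/2}$ factor, precisely because of the pointwise $1$-bound.

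Once \eqref{heatulest} is in hand, strong continuity at $t = 0$ follows by a standard $\varepsilon/3$ argument: the uniform operator bound on $L^p_\ul$ (the case $p = q$ of \eqref{heatulest}), the density of $C_\bu(\R^d)$ in $L^p_\ul(\R^d)$, and the classical fact that $S(t)\phi \to \phi$ uniformly for every $\phi \in C_\bu(\R^d)$, together imply $S(t)u_0 \to u_0$ in $L^p_\ul$ for every $u_0 \in L^p_\ul$; the case $p = \infty$ is already covered since $L^\infty_\ul = C_\bu$. The main technical obstacle is the $t$-uniform control of the two cube sums $\sum_k \sup_{Q_k} G_t$ and $\sum_k \sup_{Q_k} K_t(\cdot,x_0)$, which requires merging the peaked short-time regime with the slowly varying long-time regime and, for the second sum, crucially exploiting the $K_t \le 1$ bound that has no analogue for $G_t$ itself.
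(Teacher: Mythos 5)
Your proposal is correct, and it is worth noting that the paper itself contains no proof of this proposition at all: it is stated with the remark that it ``is not difficult to establish'' and a citation to \cite{ARCD}, where the corresponding estimates are derived in the general framework of linear parabolic equations in locally uniform spaces. Your argument is thus a self-contained alternative to the delegated proof, and all of its steps check out. The reduction via Jensen's inequality $|S(t)u_0|^p \le S(t)|u_0|^p$ to the two endpoint bounds $L^1_\ul \to L^\infty$ and $L^1_\ul \to L^1_\ul$, followed by the elementary interpolation $\|f\|_{L^q(B_{x_0})}^q \le \|f\|_{L^\infty}^{q-p}\|f\|_{L^p(B_{x_0})}^p$ and the subadditivity $(1+a)^\theta \le 1+a^\theta$, does produce exactly the exponent $\frac{d}{2}(\frac1p-\frac1q)$ in \eqref{heatulest}; and your cube sums are handled correctly in both regimes: for $t\le 1$ the factor $t^{-d/2}e^{-d_k^2/(4t)}$ over non-adjacent cubes is absorbed into a summable $e^{-cd_k^2}$, while for $t\ge 1$ the Riemann-sum comparison with $\int G_t = 1$ gives a constant. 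You correctly identified the one genuinely delicate point, namely that the $p=q$ bound must \emph{not} inherit a $t^{-d/2}$ factor, and your observation that the dual kernel satisfies $K_t(y,x_0) \le \min\bigl(1, |B_{x_0}|\,\|G_t\|_{L^\infty}\bigr)$ while retaining Gaussian decay is precisely what makes this work --- the pointwise bound $K_t\le 1$ controls the finitely many cubes near $B_{x_0}$ where the sup bound on $G_t$ would blow up as $t\to 0$. Two small points you elide, both easily repaired: you should check that $S(t)$ maps the closed subspace $L^p_\ul(\R^d)$ into itself rather than merely into $\LL^p_\ul(\R^d)$, which follows in one line since $S(t)$ commutes with translations and is uniformly bounded, so $\|\tau_y S(t)u_0 - S(t)u_0\|_{L^p_\ul} \le C\|\tau_y u_0 - u_0\|_{L^p_\ul} \to 0$ as $y \to 0$; and strong continuity at positive times should be noted to follow from continuity at $t=0$ together with the semigroup property and local uniform boundedness. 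Your $\varepsilon/3$ argument at $t=0$ correctly uses the density of $C_\bu(\R^d)$ in $L^p_\ul(\R^d)$, which is exactly property~1 of Section~\ref{ss3.1}, and the uniform convergence $S(t)\phi \to \phi$ for $\phi \in C_\bu$, which is property~3 of Section~\ref{ss2.1}. Compared with the treatment in \cite{ARCD}, which is embedded in a more general theory, your route is more elementary and has the merit of isolating the single structural fact (the $K_t \le 1$ bound) responsible for the uniform-in-$t$ boundedness on $L^p_\ul$.
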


In Proposition~\ref{heatul}, it is important to use $L^p_\ul(\R^d)$
instead of the larger space $\LL^p_\ul(\R^d)$, because if $u_0 \in
\LL^p_\ul(\R^d) \setminus L^p_\ul(\R^d)$ the solution $u(t) = S(t)u_0$
is not right continuous at $t = 0$.  For short times ($t \le 1$), the
bound \eqref{heatulest} reduces to the usual $L^p-L^q$ estimate for
the heat semigroup in $\R^d$, whereas for large times ($t \ge 1$) we
recover the $L^\infty-L^\infty$ estimate. This is not surprising if
one remembers that elements of $L^p_\ul(\R^d)$ behave locally like $L^p$
functions, but look like bounded functions when considered at a
sufficiently large scale. It is also possible to obtain smoothing
estimates for the heat semigroup in uniformly local Lebesgue 
spaces. For instance, if $1 \le p \le q \le \infty$, we have
\begin{equation}\label{heatulder}
  \|\nabla S(t)u_0\|_{L^q_\ul} \,\le\, C t^{-1/2}\Bigl(1 + 
  t^{-\frac{d}{2}(\frac1p-\frac1q)}\Bigr)\|u_0\|_{L^p_\ul}~, \qquad t > 0~.
\end{equation}

In the applications to partial differential equations, it is often
convenient to use slightly different norms on $L^p_\ul(\R^d)$
which turn out to be equivalent to \eqref{ulnorm}. Let $\rho : 
\R^d \to \R_+$ be a measurable function with the following 
two properties\:

\smallskip\noindent\quad{\bf a)} $\rho$ is positive on a set of 
nonzero measure; 

\smallskip\noindent\quad{\bf b)} $\tilde\rho \in L^1(\R^d)$, where
$\tilde \rho(x) = \sup\{\rho(y)\,|\, |y-x| \le 1\}$. 

\smallskip\noindent
The second assumption implies that $\rho \in L^1(\R^d) \cap 
L^\infty(\R^d)$. Indeed, we clearly have $\rho \le \tilde \rho$, 
hence $\|\rho\|_{L^1} \le \|\tilde \rho\|_{L^1}$. Moreover, 
if $B \subset \R^d$ is any ball of unit diameter, the 
definition of $\tilde \rho$ implies that $\rho(x) \le \tilde 
\rho(y)$ for all $x,y \in B$, hence
\begin{equation}\label{rhocomp}
  \sup_{x \in B}\rho(x) \,\le\, \inf_{y \in B}\tilde\rho(y)
  \,\le\, \frac{1}{|B|}\int_B \tilde\rho(y)\dd y~. 
\end{equation}
This shows that $\|\rho\|_{L^\infty} \le |B|^{-1}\|\tilde \rho\|_{L^1}$.
Assumption a) ensures that $\rho$ is not zero almost everywhere, 
so that $\int_{\R^d} \rho\dd x > 0$. 

The following result provides a plethora of equivalent norms 
on $L^p_\ul(\R^d)$. 

\begin{proposition}\label{rholem}
If $\rho : \R^d \to \R_+$ satisfies assumptions a) and b) above, 
then for $1 \le p < \infty$ the quantity 
\begin{equation}\label{Lprhodef}
  \|f\|_{p,\rho} \,=\, \sup_{x \in \R^d} \left(\int_{\R^d} 
  \rho(x-y)|f(y)|^p\dd y\right)^{1/p}
\end{equation}
is equivalent to the norm $\|\cdot\|_{L^p_\ul}$ on $L^p_\ul(\R^d)$. 
\end{proposition}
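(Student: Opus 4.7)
The plan is to establish the equivalence by proving the two inequalities $\|f\|_{p,\rho} \le C\|f\|_{L^p_\ul}$ and $\|f\|_{L^p_\ul} \le C\|f\|_{p,\rho}$ separately, using hypothesis (b) for the first and hypothesis (a) for the second.

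For the first inequality, I would fix $x \in \R^d$ and partition $\R^d$ into cubes $\{Q_k\}_{k \in \Z^d}$ of side length $1/\sqrt{d}$, and hence diameter $1$, centered at points $c_k$. For $y \in Q_k$ one has $|y - c_k| \le 1$, so the very definition of $\tilde\rho$ gives $\rho(x-y) \le \tilde\rho(x-c_k)$; since $Q_k$ is contained in a unit ball, $\int_{Q_k}|f|^p \le \|f\|_{L^p_\ul}^p$. Summing these estimates yields
\[
  \int_{\R^d}\rho(x-y)|f(y)|^p\dd y \,\le\, \|f\|_{L^p_\ul}^p \sum_{k \in \Z^d}\tilde\rho(x-c_k)~.
\]
The heart of the argument is then to bound the discrete sum uniformly in $x$. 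For this I would invoke estimate \eqref{rhocomp}: covering the unit ball $B(x-c_k,1)$ by a fixed number $N_d$ of balls $B_i$ of unit diameter, \eqref{rhocomp} furnishes $\tilde\rho(x-c_k) = \sup_{B(x-c_k,1)}\rho \le C_d \int_{B(x-c_k,3/2)}\tilde\rho(z)\dd z$. Inserting this and exchanging the order of summation and integration reduces matters to counting, for each $z$, the number of lattice points $c_k$ with $|z-x+c_k|\le 3/2$, which is uniformly bounded by a dimensional constant. This yields $\sum_k \tilde\rho(x-c_k) \le C_d\|\tilde\rho\|_{L^1}$ and hence $\|f\|_{p,\rho} \le C\|f\|_{L^p_\ul}$.

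For the reverse inequality, I would use assumption (a) to find a ball $B(a,r)$ and a constant $c_0 > 0$ such that $\int_{B(a,r)}\rho(z)\dd z \ge c_0$ (take a compact subset of positive measure of $\{\rho > 0\}$ on which $\rho$ is bounded below by some $c' > 0$). Given any $x_0 \in \R^d$, I would integrate the quantity $F(x) := \int_{\R^d}\rho(x-y)|f(y)|^p\dd y$ over the shifted ball $B(x_0+a,2r)$ and apply Fubini:
\[
  \int_{B(x_0+a,2r)} F(x)\dd x \,=\, \int_{\R^d}|f(y)|^p \int_{B(x_0+a-y,2r)}\rho(z)\dd z\dd y~.
\]
When $|y - x_0|\le r$ the inner ball contains $B(a,r)$, so the inner integral is bounded below by $c_0$; at the same time the left side is bounded above by $|B(0,2r)|\,\|f\|_{p,\rho}^p$. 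This produces the local estimate $\int_{B(x_0,r)}|f|^p \le c_0^{-1}|B(0,2r)|\,\|f\|_{p,\rho}^p$, and covering the unit ball $B(x_0,1)$ by a fixed number of balls of radius $r$ (depending only on $d$ and $r$) yields the desired $\|f\|_{L^p_\ul} \le C\|f\|_{p,\rho}$.

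The main obstacle is the discrete-sum estimate in the first direction: one must control $\sum_k \tilde\rho(x-c_k)$ uniformly in $x$ knowing only that $\tilde\rho \in L^1$, which without extra regularity cannot be done by a naive Riemann-sum comparison. The key trick is to use \eqref{rhocomp} in reverse, to bound each pointwise value $\tilde\rho(x-c_k)$ by a local integral of $\tilde\rho$, after which the lattice sum collapses to a bounded overlap count times $\|\tilde\rho\|_{L^1}$. Everything else reduces to standard applications of Fubini and a covering argument.
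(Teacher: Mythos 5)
Your proof is correct and follows essentially the same route as the paper: the upper bound via a partition into diameter-one cubes together with the comparison \eqref{rhocomp} between $\sup\rho$ and local averages of $\tilde\rho\in L^1$, and the lower bound via a Fubini averaging of $\int\rho(x-y)|f(y)|^p\dd y$ over translates. The only (harmless) differences in execution are that the paper bounds $\rho(x-y)$ by $\sup_{Q_{x-k}}\rho$ directly, so the lattice sum collapses by exact tiling of $\R^d$ with no overlap count, and in the converse direction it translates $\rho$ so that $\int_{B_0}\rho\dd x=\epsilon>0$ and averages over shifts $|z|\le 2$ applied to a near-maximizing unit ball, which avoids your final covering step.
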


\begin{remark}\label{obvious}
Obviously, if $\rho$ is the characteristic function of the unit ball 
$B_0 \subset \R^d$, the definition \eqref{Lprhodef} reduces to 
\eqref{Lpuldef}. So \eqref{Lprhodef} is clearly a generalization 
of \eqref{Lpuldef}. 
\end{remark}

\begin{proof}
For any $z \in \R^d$, we denote by $Q_z \subset \R^d$ the cube of 
unit diameter centered at $z$, the edges of which are parallel 
to the coordinate axes. The Lebesgue measure of $Q_z$ is 
$\lambda^d$, where $\lambda = d^{-1/2}$. Given any $f \in 
\LL^p_\ul(\R^d)$ and any $x \in \R^d$, we estimate
\begin{equation}\label{rhofirst}
  \int_{\R^d} \rho(x-y)|f(y)|^p\dd y \,=\, \sum_{k \in \lambda\Z^d} 
  \int_{Q_k} \rho(x-y)|f(y)|^p\dd y \,\le\, \sum_{k \in \lambda\Z^d} 
  r(x-k) \int_{Q_k}|f(y)|^p\dd y~, 
\end{equation}
where $r(x-k) = \sup\{\rho(x-y)\,|\, y \in Q_k\} = \sup\{\rho(z)\,|\, 
z \in Q_{x-k}\}$. We observe that
\[
  \int_{Q_k}|f(y)|^p\dd y \,\le\, \int_{B_k}|f(y)|^p\dd y 
  \,\le\, \|f\|_{L^p_\ul}^p~, 
\]
because $Q_k \subset B_k$ (the ball of unit radius centered 
at $k$). On the other hand, we have as in \eqref{rhocomp}\: 
\[
  r(x-k) \,=\, \sup_{z \in Q_{x-k}}\rho(z) \,\le\, 
  \inf_{y \in Q_{x-k}}\tilde \rho(y) \,\le\, \frac{1}{\lambda^d}
  \int_{Q_{x-k}} \tilde \rho(y)\dd y~,
\] 
hence
\[
  \sum_{k \in \lambda\Z^d}r(x-k) \,\le\, \frac{1}{\lambda^d}
  \sum_{k \in \lambda\Z^d}\int_{Q_{x-k}} \tilde \rho(y)\dd y 
  \,=\, d^{d/2}\|\tilde \rho\|_{L^1}~. 
\]
Taking the supremum over $x \in \R^d$ in \eqref{rhofirst}, we 
conclude that $\|f\|_{p,\rho}^p \le d^{d/2}\|\tilde \rho\|_{L^1} 
\|f\|_{L^p_\ul}^p$. 

Conversely, as $\rho$ is positive on a set of nonzero measure, 
we can assume without loss of generality that $\int_{B_0} \rho 
\dd x = \epsilon > 0$, where $B_0 \subset \R^d$ is the unit 
ball centered at the origin. If $\|f\|_{L^p_\ul} > 0$, we choose 
$x \in \R^d$ such that
\[
  \int_{|y-x|\le 1}|f(y)|^d\dd y \,\ge\, \frac{1}{2}
  \|f\|_{L^p_\ul}^p~.
\]
We then have
\[
  \int_{|z|\le 2} \biggl\{\int_{\R^d} \rho(x{-}y{-}z)|f(y)|^p\dd y
  \biggr\}\dd z \,=\, \int_{\R^d} \biggl\{\int_{|z|\le 2} \rho(x{-}y{-}z)
  \dd z\biggr\}|f(y)|^p\dd y\,\ge\, \frac{\epsilon}{2}
  \|f\|_{L^p_\ul}^p~,
\]
because by assumption $\int_{|z|\le 2} \rho(x{-}y{-}z)\dd z \ge 
\epsilon$ whenever $|y-x| \le 1$. Thus there exists $z \in 
\R^d$ with $|z| \le 2$ such that
\[
  \|f\|_{p,\rho}^p \,\ge\, \int_{\R^d} \rho(x{-}y{-}z)|f(y)|^p\dd y
  \,\ge\, \frac{\epsilon}{2}\meas\{z \in \R^d\,|\, |z|\le 2\}^{-1}
  \|f\|_{L^p_\ul}^p~.
\]
This proves the desired equivalence. 
\end{proof}

\begin{remark}\label{charac}
Proposition~\ref{rholem} provides sufficient conditions on the weight 
function $\rho$ so that \eqref{Lprhodef} is equivalent 
to \eqref{Lpuldef}. These conditions are weaker than what 
can be found in the existing literature (see e.g. 
\cite[Definition~4.1]{ARCD}), but it is not clear that 
assumptions a) and b) are optimal. It is easy to verify that
any weight $\rho$ for which \eqref{Lprhodef} is equivalent to 
\eqref{Lpuldef} should satisfy $\rho \in L^1(\R^d) \cap 
L^\infty(\R^d)$ and $\int \rho\dd x > 0$, but these properties
alone are not sufficient, as can be seen from the following 
example. Assume that $d = 1$ and take
\[
  \rho \,=\, \sum_{k=1}^\infty \frac{1}{\sqrt{k}}\,\1_{[-k-k^{-1},-k]}~,
  \qquad f \,=\, \sum_{k=1}^\infty k\,\1_{[k,k+k^{-1}]}~,  
\] 
where $\1_I$ denotes the characteristic function of an 
interval $I \subset \R$. Then $\rho \in L^1(\R) \cap 
L^\infty(\R)$, and using definition \eqref{Lpuldef} we 
see that $f \in \LL^1_\ul(\R)$. But
\[
  \int_\R \rho(-y) f(y) \dd x \,=\, \sum_{k=1}^\infty \frac{1}{\sqrt{k}}
  \,=\, +\infty~,
\]
so that $\|f\|_{1,\rho} = +\infty$. 
\end{remark}

\subsection{Uniformly local energy estimates for the heat  equation}
\label{ss3.2}

In this section we show on a simple example how uniformly local energy
estimates can be used to obtain information on the solutions of
partial differential equations on unbounded domains. We concentrate on
the linear heat equation on $\R^d$, with nondecaying initial data
$u_0$. In that particular example, the solution can be written in
explicit form, but we shall not use the heat kernel
\eqref{heatkernel2} because we want to develop robust methods that can
be applied to more complicated situations, such as the two-dimensional
Navier-Stokes equations which will considered later.

Let $u_0 \in L^2_\ul(\R^d)$, and let $u(x,t)$ be the solution of 
the heat equation
\begin{equation}\label{heateq2}
  \partial_t u(x,t) \,=\, \Delta u(x,t)~, \qquad x \in \R^d~, \quad
  t > 0~,
\end{equation}
with initial data $u(\cdot,0) = u_0$. We know from 
Proposition~\ref{heatul} that $u \in C^0(\R_+, L^2_\ul(\R^d))$, and
our goal is to derive accurate bounds on $u$ using localized 
energy estimates. Let $\rho : \R^d \to (0,+\infty)$ be a 
Lipschitz continuous function satisfying the assumptions 
of Proposition~\ref{rholem} and such that $|\nabla\rho(x)| \le \rho(x)$
for almost every $x \in \R^d$. Typical examples are 
\begin{equation}\label{rhoex}
  \rho(x) \,=\, e^{-|x|}~, \qquad \hbox{or}\qquad 
  \rho(x) \,=\, \frac{1}{(m+|x|)^m} \quad\hbox{where }m > d~.
\end{equation}
Note that $\rho$ cannot decay to zero faster than an exponential
as $|x| \to \infty$, because of the assumption $|\nabla\rho| \le \rho$. 
For any $R > 0$, we also define $\rhoR(x) = \rho(x/R)$. 

Since the solution $u$ of \eqref{heateq2} is smooth and bounded 
for $t > 0$, we can compute
\begin{align*}
  \frac{1}{2}\frac{\D}{\D t}\int_{\R^d} \rhoR u^2 \dd x \,&=\,
  \int_{\R^d} \rhoR u u_t \dd x \,=\, \int_{\R^d} \rhoR u 
  \Delta u \dd x \\ \,&=\, -\int_{\R^d} \rhoR |\nabla u|^2 \dd x 
  -\int_{\R^d} (\nabla\rhoR\cdot\nabla u)u \dd x \\
  \,&\le\, -\int_{\R^d} \rhoR |\nabla u|^2 \dd x + \frac{1}{R}
  \int_{\R^d} \rhoR|\nabla u||u| \dd x \\ \,&\le\, -\frac12\int_{\R^d} 
  \rhoR |\nabla u|^2 \dd x + \frac{1}{2R^2}\int_{\R^d} \rhoR u^2 
  \dd x~.
\end{align*}
Using Gronwall's lemma (see Section~\ref{A3}), we deduce that
\begin{equation}\label{hunif1}
  \int_{\R^d} \rhoR(x) u(x,t)^2 \dd x + \int_0^t \!\int_{\R^d} 
  \rhoR(x) |\nabla u(x,s)|^2 \dd x\dd s \,\le\, 
  \biggl(\int_{\R^d} \rhoR(x) u_0(x)^2 \dd x\biggr)\e^{t/R^2}~, 
\end{equation}
for all $t > 0$. This estimate looks rather pessimistic, because
it predicts an exponential growth of the solution as $t \to \infty$, 
but one should keep in mind that $\int\rhoR u_0^2 \dd x < \infty$
is the only assumption on the initial data that was really used in 
the derivation of \eqref{hunif1}. If $\rho(x) = \e^{-|x|}$, this means 
that $u_0$ is allowed to grow exponentially as $|x| \to \infty$, in 
which case the solution of \eqref{heateq2} indeed grows exponentially 
in time. 

To improve \eqref{hunif1} for large times, we must use the assumption 
that $u_0 \in L^2_\ul(\R^d)$. If $R \ge 1$, an easy calculation 
shows that
\begin{equation}\label{hunif2}
  \int_{\R^d} \rhoR(x) u_0(x)^2 \dd x \,\le\, C_d R^d \|u_0\|_{L^2_\ul}^2~, 
\end{equation}
for some constant $C_d > 0$ depending on the dimension $d$. So if 
we take $R = R(t) = (1+t)^{1/2}$ we obtain from \eqref{hunif1}
and \eqref{hunif2}
\begin{equation}\label{hunif3}
  \int_{\R^d} \rhoRt(x) u(x,t)^2 \dd x + \int_0^t \!\int_{\R^d} 
  \rhoRt(x) |\nabla u(x,s)|^2 \dd x\dd s \,\le\, C
  \|u_0\|_{L^2_\ul}^2 (1+t)^{d/2}~, 
\end{equation}
for all $t > 0$. This estimate is clearly superior to \eqref{hunif1}, 
because the right-hand side grows only polynomially as $t \to \infty$. 
Another elementary but important observation is that the same estimate 
holds if we replace $\rhoR(x)$ with $\rhoR(x-y)$ for any fixed 
$y \in \R^d$. Taking the supremum over all translations and using 
the fact that $\rhoR(x) \ge c > 0$ whenever $|x| \le R$, we obtain 
\begin{equation}\label{hunif4}
  \sup_{x \in \R^d} \frac{1}{R(t)^d}\int_{|y-x| \le R(t)} 
  u(y,t)^2\dd y \,\le\, C \|u_0\|_{L^2_\ul}^2~, \qquad t \ge 0~,
\end{equation}
where $R(t) = (1+t)^{1/2}$ and $C > 0$ is independent of $t$. 
In particular, if $u_0 \equiv 1$, then $u(\cdot,t) \equiv 1$ 
for all positive times, and \eqref{hunif4} is sharp in that
particular case, as far as the time dependence is concerned.

Unfortunately, the approach developed so far does not allow
to bound the norm $\|u(t)\|_{L^2_\ul}$ in an optimal way. 
Indeed, the best we can deduce directly from \eqref{hunif3} is
\[
  \|u(t)\|_{L^2_\ul} \,\le\, C \|u_0\|_{L^2_\ul}(1+t)^{d/4}~,
  \qquad t \ge 0~,
\]
which is not sharp in view of Proposition~\ref{heatul}.
To improve that result, a possibility is to use uniformly 
local $L^p$ estimates for higher values of $p$. Indeed, 
if $p \in \N^*$ we have as before
\begin{align*}
  \frac{1}{2}\frac{\D}{\D t}\int_{\R^d} \rhoR u^{2p} \dd x \,&=\,
  p\int_{\R^d} \rhoR u^{2p-1}u_t \dd x \,=\,
  p\int_{\R^d} \rhoR u^{2p-1}\Delta u \dd x \\ 
  \,&=\, -p(2p{-}1)\int_{\R^d} \rhoR u^{2p-2}|\nabla u|^2 \dd x 
  -p\int_{\R^d} (\nabla\rhoR\cdot\nabla u)u^{2p-1} \dd x \\
  \,&=\, -\frac{2p{-1}}{p}\int_{\R^d} \rhoR |\nabla u^p|^2 \dd x 
  -\int_{\R^d} (\nabla\rhoR\cdot\nabla u^p)u^p \dd x \\
  \,&\le\, -\int_{\R^d} \rhoR |\nabla u^p|^2 \dd x 
  + \frac{1}{R}\int_{\R^d} \rhoR|\nabla u^p||u|^p \dd x 
  \\ \,&\le\, -\frac12\int_{\R^d} \rhoR |\nabla u^p|^2 \dd x + 
  \frac{1}{2R^2}\int_{\R^d} \rhoR u^{2p}\dd x~.
\end{align*}
Proceeding as above, we find if $u_0 \in L^{2p}_\ul(\R^d)$\:
\begin{align}\nonumber
  \int_{\R^d} \rhoRt(x) u(x,t)^{2p} \dd x \,\le\,  
  C \int_{\R^d} \rhoRt(x) u_0(x)^{2p} \dd x \,\le\, 
  C \|u_0\|_{L^{2p}_\ul}^{2p} (1+t)^{d/2}~, 
\end{align}
where $R(t) = (1+t)^{1/2}$, and this implies
\[
  \|u(t)\|_{L^{2p}_\ul} \,\le\, C^{\frac{1}{2p}} \|u_0\|_{L^{2p}_\ul}
  (1+t)^{\frac{d}{4p}}~, \qquad t \ge 0~.
\]
Here the constant $C$ does not depend on $p$. Now, if we assume 
that $u_0 \in C_\bu(\R^d)$, we can take the limit $p \to \infty$
in the above inequality, and we obtain the bound $\|u(t)\|_{L^\infty} 
\le \|u_0\|_{L^\infty}$ which is clearly optimal. 

\subsection{Uniformly local energy estimates for the $2D$ 
Navier-Stokes equations}
\label{ss3.3}

After these preliminaries, we return to the two-dimensional
Navier-Stokes equations in $\R^2$. We assume that the initial data 
$u_0 \in X$ satisfy $\div u_0 = 0 $ and $\omega_0 = \curl u_0 \in
L^\infty(\R^2)$. Let $u \in C^0([0,+\infty),X)$ be the unique 
mild solution of \eqref{2DNS} given by Proposition~\ref{globex}. 
Our goal is to control the velocity field $u(x,t)$ for large
times using uniformly local $L^2$ estimates. 

For technical reasons, related to the control of the pressure 
term in \eqref{2DNS}, it is convenient here to used compactly
supported weight functions, see \cite{Ze}. Let $\psi : 
\R^2 \to \R_+$ be a smooth function satisfying
\[
  \psi(x) \,=\, \begin{cases} 1 & \hbox{if} \quad|x| \le 1\,, \\
  0 & \hbox{if} \quad|x| \ge 2\,. \end{cases}
\]
We also assume that $\psi$ is radially symmetric and nonincreasing
along rays, and we define $\phi = \psi^2$. Then $\phi(x)$ is also
equal to $1$ if $|x| \le 1$ and to $0$ if $|x| \ge 2$. In addition, we
have the estimate
\begin{equation}\label{nablaphiest}
  |\nabla \phi(x)| \,\le\, C_3 \phi(x)^{1/2}~, \qquad x \in \R^2~,
\end{equation}
where $C_3 = 2\|\nabla \psi\|_{L^\infty}$. Note that a compactly
supported function $\phi$ cannot satisfy $|\nabla \phi| \le C \phi$,
unless $\phi \equiv 0$, and this is why we shall only use the weaker
property \eqref{nablaphiest}. Given $x_0 \in \R^2$ and $R > 0$, we
also consider the translated and rescaled localization function
$\phi_{R,x_0}$ defined by
\begin{equation}\label{phiRdef}
  \phi_{R,x_0}(x) \,=\, \phi\Bigl(\frac{x-x_0}{R}\Bigr)~, \qquad
  x \in \R^2.
\end{equation}
By construction we have $\phi_{R,x_0} = 1$ on $B_{x_0}^R$ and
$\phi_{R,x_0} = 0$ on the complement of $B_{x_0}^{2R}$, where
$B_{x_0}^R$ denotes the closed ball with radius $R$ centered 
at $x_0$\:
\[
  B_{x_0}^R \,=\, \Bigl\{x \in \R^2\,\Big|\, |x-x_0| \le R\Bigr\}~.
\]

Our starting point is the following localized energy estimate\:

\begin{lemma}\label{loclem}
For any $x_0 \in \R^2$ and any $R > 0$, the solution of \eqref{2DNS}
satisfies
\begin{align}\nonumber
  \frac12 \frac{\D}{\D t}\int_{\R^2} \phi_{R,x_0} |u|^2 \dd x &+ 
  \nu\int_{\R^2} |\nabla(\phi_{R,x_0}^{1/2}u)|^2 \dd x \\ \label{locest}
  \,&=\, \nu\int_{\R^2} |\nabla \phi_{R,x_0}^{1/2}|^2 |u|^2 \dd x +
  \int_{\R^2} q (u\cdot \nabla\phi_{R,x_0})\dd x~,   
\end{align}
where $q = p + \half |u|^2$.
\end{lemma}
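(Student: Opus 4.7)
The plan is to derive the identity as a localized energy estimate, by multiplying the momentum equation by $\phi_{R,x_0}\,u$ and integrating over $\R^2$. Since $u$ is smooth for $t>0$ by Remark~\ref{classicsol} and $\phi_{R,x_0}$ has compact support contained in $B_{x_0}^{2R}$, every integral below is taken over a bounded set and all integrations by parts are justified without boundary terms. Write $\phi = \phi_{R,x_0}$ throughout to lighten notation, and recall $\div u = 0$.

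First, I would take the $L^2$ inner product of \eqref{2DNS} with $\phi\,u$. The time-derivative term is straightforward:
\[
\int_{\R^2} \phi\, u\cdot \partial_t u \dd x \,=\, \tfrac12 \tfrac{\D}{\D t}\int_{\R^2}\phi |u|^2 \dd x.
\]
For the nonlinear term I use $u_k u_j \partial_k u_j = \tfrac12 u_k \partial_k |u|^2$, then integrate by parts using $\div u = 0$:
\[
\int_{\R^2}\phi\, u\cdot (u\cdot\nabla)u \dd x \,=\, \tfrac12\int_{\R^2}\phi\, u\cdot\nabla |u|^2 \dd x \,=\, -\tfrac12 \int_{\R^2}(u\cdot\nabla\phi)|u|^2\dd x.
\]
For the pressure term, again using $\div u = 0$:
\[
\int_{\R^2}\phi\, u\cdot\nabla p \dd x \,=\, -\int_{\R^2} p\,\div(\phi u) \dd x \,=\, -\int_{\R^2} p\,(u\cdot\nabla\phi)\dd x.
\]
Adding these two contributions produces exactly $-\int_{\R^2} q\,(u\cdot\nabla\phi)\dd x$, which will appear on the right-hand side of \eqref{locest} with the correct sign once moved across.

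The crucial step is handling the viscous term so that the gradient-squared piece reappears in the ``good'' form $|\nabla(\phi^{1/2}u)|^2$. Here I would use the pointwise algebraic identity
\[
|\nabla(\phi^{1/2}u)|^2 \,=\, |\nabla\phi^{1/2}|^2 |u|^2 + \tfrac12 \nabla\phi\cdot\nabla|u|^2 + \phi |\nabla u|^2,
\]
which follows from expanding $\nabla(\phi^{1/2}u_j)=(\nabla\phi^{1/2})u_j + \phi^{1/2}\nabla u_j$, summing over $j$, and using $2\phi^{1/2}\nabla\phi^{1/2}=\nabla\phi$. On the other hand, a direct integration by parts gives
\[
\nu\int_{\R^2}\phi\, u\cdot\Delta u \dd x \,=\, -\nu\int_{\R^2}\phi|\nabla u|^2\dd x - \tfrac{\nu}{2}\int_{\R^2}\nabla\phi\cdot\nabla|u|^2\dd x.
\]
Substituting $\phi|\nabla u|^2$ from the algebraic identity into this expression, the two occurrences of $\tfrac{\nu}{2}\int\nabla\phi\cdot\nabla|u|^2$ cancel exactly, leaving
\[
\nu\int_{\R^2}\phi\, u\cdot\Delta u \dd x \,=\, -\nu\int_{\R^2}|\nabla(\phi^{1/2}u)|^2\dd x + \nu\int_{\R^2}|\nabla\phi^{1/2}|^2|u|^2\dd x.
\]
Assembling the four contributions yields \eqref{locest}.

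The only mildly delicate point is the appearance of the pressure, since $p$ is only known a priori to be in $\BMO(\R^2)$. However, the cutoff $\nabla\phi$ is compactly supported, and by Remark~\ref{classicsol} the mild solution is classical on $(0,T]$, so $p$ is smooth in $x$ (being determined by \eqref{pdef} with smooth $u$) and locally bounded; the integral $\int p\,(u\cdot\nabla\phi)\dd x$ is therefore absolutely convergent and integration by parts on the compactly supported test field $\phi u$ is legitimate. No other step raises genuine difficulties, so the main task is really the careful algebraic bookkeeping that produces the cancellation in the viscous term.
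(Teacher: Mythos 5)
Your proposal is correct and follows essentially the same route as the paper: the paper simply starts from the rotational form \eqref{2DNS2}, where the advection term $u^\perp\omega$ is pointwise orthogonal to $u$ and drops out immediately, whereas you start from the convective form and reassemble $q = p + \half|u|^2$ after integrating the nonlinear term by parts --- the same algebra in a slightly different order, since \eqref{2DNS2} is itself obtained from the identity \eqref{nlinid} that your computation reproduces. Your treatment of the viscous term, including the exact cancellation of the $\nabla\phi\cdot\nabla|u|^2$ contributions that yields $-\nu\int|\nabla(\phi^{1/2}u)|^2 + \nu\int|\nabla\phi^{1/2}|^2|u|^2$, coincides with the paper's computation.
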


\begin{proof}
For simplicity we write $\phi$ instead of $\phi_{R,x_0}$, and we 
denote $\psi = \phi^{1/2}$. Using the equivalent form \eqref{2DNS2}
of the Navier-Stokes equations, we easily obtain
\begin{equation}\label{locest1}
 \frac12 \frac{\D}{\D t}\int_{\R^2} \phi |u|^2 \dd x  \,=\, 
 \int_{\R^2} \phi u \cdot u_t\dd x \,=\, \int_{\R^2} \phi u \cdot 
 (\nu\Delta u - \nabla q)\dd x~.  
\end{equation}
Now, we have for $k = 1,2$\:
\[
  \int_{\R^2} |\nabla(\psi u_k)|^2 \,=\,  \int_{\R^2} |u_k\nabla\psi + 
  \psi\nabla u_k|^2 \,=\,  \int_{\R^2} (u_k^2|\nabla\psi|^2 + 
  \psi^2|\nabla u_k|^2 + 2\psi u_k\nabla\psi\cdot\nabla u_k)\dd x~, 
\]
hence 
\begin{align*}
  \int_{\R^2} \psi^2 u_k \Delta u_k \dd x \,&=\,
  -\int_{\R^2} \psi^2 |\nabla u_k|^2\dd x - 2\int_{\R^2} 
  \psi u_k \nabla\psi\cdot\nabla u_k\dd x \\
  \,&=\, -\int_{\R^2} |\nabla (\psi u_k)|^2\dd x + 
  \int_{\R^2} |\nabla\psi|^2 u_k^2\dd x~. 
\end{align*}
Thus summing over $k$ and multiplying by $\nu$, we obtain
\begin{equation}\label{locest2}
  \nu\int_{\R^2} \phi u\cdot (\Delta u) \dd x \,=\,
  -\nu\int_{\R^2} |\nabla (\psi u)|^2\dd x + \nu\int_{\R^2} 
  |\nabla\psi|^2 |u|^2\dd x~. 
\end{equation}
On the other hand, since $\div u = 0$, we easily find
\begin{equation}\label{locest3}
  \int_{\R^2} \phi u\cdot \nabla q \dd x \,=\, \int_{\R^2} \phi 
  \div(uq) \dd x \,=\, - \int_{\R^2} q(u\cdot\nabla\phi)\dd x~.
\end{equation}
Combining \eqref{locest1}--\eqref{locest3} we arrive at 
\eqref{locest}. 
\end{proof}

Our next task is to transform the identity \eqref{locest} into 
a differential inequality for the uniformly local energy norm
of the velocity field. The terms proportional to the viscosity 
parameter $\nu$ originate from the linear part of the equation 
and can be easily estimated, as in Section~\ref{ss3.2}. The 
difficulty is concentrated in the last term, which contains
the modified pressure $q = p + \half |u|^2$. That term is 
nonlocal in space and cubic in the velocity field $u$. Using
the results of Section~\ref{A2}, we obtain the following 
important estimate\:

\begin{lemma}\label{qloclem}
There exists a constant $C_4 > 0$ such that, if $x_0 \in \R^2$
and $0 < r \le R$, one has 
\begin{align}\nonumber
  \biggl|\int_{\R^2} q(u\cdot\nabla\phi_{R,x_0})\dd x\biggr| \,\le\, C_4
  \biggl(&\frac{r}{R}\|\omega\|_{L^\infty}\|u\|_{L^2(B_{x_0}^{3R})}^2 \\
  \label{qlocest} &+ \frac{1}{rR}\|u\|_{L^2(B_{x_0}^{3R})}^3 + \frac{1}{R^2}
  \sup_{z \in \R^2}\|u\|_{L^2(B_{z}^{2R})}^3\biggr)\,. 
\end{align}
\end{lemma}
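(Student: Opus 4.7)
The plan is to exploit the compact support and size of $\nabla\phi_{R,x_0}$, which vanishes outside the annulus $A = B_{x_0}^{2R}\setminus B_{x_0}^R$ and satisfies $|\nabla\phi_{R,x_0}|\le C/R$. Since $\div u = 0$ and $\phi_{R,x_0}$ has compact support, one has $\int_{\R^2}u\cdot\nabla\phi_{R,x_0}\dd x = 0$, so we may freely subtract any constant from $q$. Cauchy--Schwarz then gives
\[
  \biggl|\int_{\R^2} q\,u\cdot\nabla\phi_{R,x_0}\dd x\biggr| \,\le\, \frac{C}{R}\,\inf_{c\in\R}\|q-c\|_{L^2(A)}\,\|u\|_{L^2(A)}~,
\]
and since $\|u\|_{L^2(A)}\le\|u\|_{L^2(B_{x_0}^{3R})}$, the problem reduces to proving the oscillation bound
\[
  \inf_{c\in\R}\|q-c\|_{L^2(A)} \,\le\, C\Bigl(r\|\omega\|_{L^\infty}\|u\|_{L^2(B_{x_0}^{3R})} + \tfrac{1}{r}\|u\|_{L^2(B_{x_0}^{3R})}^2 + \tfrac{1}{R}\sup_{z\in\R^2}\|u\|_{L^2(B_z^{2R})}^2\Bigr)~.
\]
The auxiliary scale $r\in(0,R]$ will serve as a free parameter to balance the near-field and far-field contributions coming from the pressure.

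To establish this bound, split $q = \tfrac12|u|^2 + p$ and handle the two pieces separately. For the local piece $\tfrac12|u|^2$, use a Poincar\'e-type inequality on balls of radius $r$ covering the annulus, combined with a decomposition of $u$ into an oscillation at scale $r$ (controlled by $r\|\omega\|_{L^\infty}$ via the Biot--Savart law of Section~\ref{A1}) and a bulk $L^2$ remainder drawn from $B_{x_0}^{3R}$. For the pressure $p$, the identity \eqref{pdef} is not directly usable because the Riesz transforms are unbounded on $L^\infty$; instead I would invoke the new representation formula for $p$ established in Section~\ref{A2}, which expresses $p$ as an absolutely convergent integral of a decaying kernel against a bilinear form in $u$ and $\omega$. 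Splitting the domain of integration at distance $r$ from the annulus, the near-field piece carries the $\omega$-factor and produces a bound of the form $r\|\omega\|_{L^\infty}$ times a local $L^2$ norm of $u$; the far-field tail, by the decay of the kernel, is controlled by the uniformly local $L^2$ average $\sup_z\|u\|_{L^2(B_z^{2R})}^2$.

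The main obstacle is the precise form of the pressure oscillation estimate, specifically ensuring that the near-field contribution exposes $\|\omega\|_{L^\infty}$ rather than $\|u\|_{L^\infty}$ -- the latter is exactly the quantity the whole argument is designed to estimate, and this is precisely why the new representation of Section~\ref{A2} is introduced. The remaining steps are bookkeeping: collect the three contributions from the oscillation bound, multiply by $\|u\|_{L^2(B_{x_0}^{3R})}/R$, and absorb all balls of radius $\le 3R$ centered in $A$ into $B_{x_0}^{3R}$ on one hand, and into the supremum $\sup_z\|u\|_{L^2(B_z^{2R})}$ on the other. The constraint $r\le R$ guarantees that the near-field ball stays inside $B_{x_0}^{3R}$, which is what allows the first two terms of \eqref{qlocest} to be written with a single local $L^2$ norm.
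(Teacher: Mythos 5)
Your reduction --- using $\div u = 0$ to subtract a free constant from $q$, Cauchy--Schwarz against $|\nabla\phi_{R,x_0}|\le C/R$ on its support, and a near/far splitting of a representation formula at the auxiliary scale $r$ --- is exactly the skeleton of the paper's proof, and your far-field step (kernel decay producing $\frac1R\sup_z\|u\|_{L^2(B_z^{2R})}^2$) is sound. But there is a genuine gap in how you treat the local part. The representation formula of Section~\ref{A2} (Lemma~\ref{qrepresentation}) is \emph{not} a formula for $p$: it is a formula for the modified pressure $q = p + \half|u|^2$, which solves $-\Delta q = \div(u^\perp\omega)$ by the identity \eqref{nlinid}. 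The whole point of this combination is that the near-field singularity of the pure pressure cancels: for $q$, the short-range piece is the locally integrable Biot--Savart kernel $|z|^{-1}$ tested against $u\,\omega$, so cutting off at scale $r$ and applying Young's inequality immediately yields the factor $r\|\omega\|_{L^\infty}\|u\|_{L^2(B_{x_0}^{3R})}$. If you split $q = p + \half|u|^2$ as you propose, the near field of $p$ alone is a genuinely singular integral of $u\otimes u$ (the kernel $K_{k\ell}(z)$ in \eqref{MKdef} behaves like $|z|^{-2}$, which is not integrable at the origin), and bounding it in $L^2$ forces a local bound on $\||u|^2\|_{L^2}$, i.e.\ on $\|u\|_{L^\infty}\|u\|_{L^2}$ or $\|u\|_{L^4}^2$ --- precisely the forbidden quantity, and precisely the delicate interpolation route of Zelik that the paper's treatment of the pressure is designed to avoid.

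Your Poincar\'e treatment of the remaining piece $\half|u|^2$ also fails on two counts. First, bounded vorticity controls $\nabla u$ only in $\BMO$ (see \eqref{nablaubound}), so $u$ is merely log-Lipschitz: the oscillation of $u$ at scale $r$ is $\OO\bigl(r\log\frac{R}{r}\bigr)\|\omega\|_{L^\infty}$, not $\OO(r)\|\omega\|_{L^\infty}$; the linear bound \eqref{BS5} of Corollary~\ref{BScor} concerns only the symmetric second difference $u(x)+u(-x)-2u(0)$ (which is how it is used in Proposition~\ref{bestglob}) and gives no modulus of continuity. Second, covering the annulus by balls of radius $r$ and applying a per-ball Poincar\'e inequality implicitly subtracts a \emph{different} constant on each ball, whereas only one global constant can be removed from $q$ in $\int_{\R^2} q\,(u\cdot\nabla\phi_{R,x_0})\dd x$, since the local fluxes $\int_B u\cdot\nabla\phi_{R,x_0}\dd x$ do not vanish individually; controlling the drift of the local averages of $|u|^2$ across the annulus again costs a factor $\|u\|_{L^\infty}$. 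The repair is simply not to split: apply Lemma~\ref{qrepresentation} to $q$ itself with the rescaled cutoff $\chi_r$, as the paper does, and estimate the resulting four pieces by Young's inequality --- near field $\OO(r)\|\omega\|_{L^\infty}$, mid-range kernels of $L^2$ norm $\OO(1/r)$, and the far tail with the cancellation $K_{k\ell}(x-y)-K_{k\ell}(x_0-y) = \OO(R/|y|^3)$ from \eqref{Kest}.
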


\begin{proof}
For simplicity we assume that $x_0 = 0$, and we write $\phi_R$
instead of $\phi_{R,x_0}$. To use the results of Section~\ref{A2}
we choose a smooth function $\chi : \R^2 \to [0,1]$ satisfying
\[
  \chi(x) \,=\, \begin{cases} 1 & \hbox{if} \quad|x| \le 1/2\,, \\
  0 & \hbox{if} \quad|x| \ge 1\,, \end{cases}
\]
and we denote $\chi_r(x) = \chi(x/r)$, where $0 < r \le R$. To 
estimate the left-hand side of \eqref{qlocest}, it is sufficient
to control the modified pressure $q$ in the ball $B_0^{2R}$, 
because $\nabla \phi_R$ vanishes outside that ball. Also, as 
is clear from \eqref{locest3}, adding a constant to $q$ does 
not change the quantity we want to bound. Thus, using 
Lemma~\ref{qrepresentation}, we can assume that 
\begin{equation}\label{qdecomp}
  q(x) \,=\,  q_1(x) + q_2(x) + q_3(x) + q_4(x)~, 
  \qquad x \in B_0^{2R}~,
\end{equation}
where
\begin{align*}
  q_1(x) \,&=\, \frac{1}{2\pi}\int_{\R^2} \chi_r(x-y) \frac{(x-y)^\perp}{
  |x-y|^2}\cdot u(y)\omega(y)\dd y~, \\
  q_2(x) \,&=\, \frac{1}{4\pi}\sum_{k,\ell=1}^2\int_{\R^2} M_{k\ell}^{(r)}(x-y)
  u_k(y)u_\ell(y)\dd y~, \\ 
  q_3(x) \,&=\, \frac{1}{2\pi}\sum_{k,\ell=1}^2\int_{|y|\le 3R} 
  \chi_r^c(x-y)K_{k\ell}(x-y)u_k(y)u_\ell(y)\dd y~, \\ 
  q_4(x) \,&=\, \frac{1}{2\pi}\sum_{k,\ell=1}^2\int_{|y|\ge 3R} 
  \Bigl\{K_{k\ell}(x-y) - K_{k\ell}(x_0-y)\Bigr\}u_k(y)u_\ell(y)\dd y~. 
\end{align*}
The expressions above agree with \eqref{qidef} up to the following
inessential differences. First we use everywhere the rescaled cut-off
$\chi_r$ instead of $\chi$. In particular we denote by
$M_{k\ell}^{(r)}$ the functions defined by \eqref{MKdef} with $\chi$
replaced by $\chi_r$, and we write $\chi_r^c = 1 - \chi_r$.  Next, in
the definition \eqref{qidef} of $q_3(x,x_0)$, we take $x_0 = 0$ and we
decompose the domain of integration as $\R^2 = B_0^{3R} \cup
(B_0^{3R})^c$. The integral over $y \in B_0^{3R}$ coincides with the
function $q_3$ above, up to an irrelevant additive constant, and the
integral over $y \notin B_0^{3R}$ gives exactly $q_4$, because
$\chi_r^c(x-y) = \chi_r^c(-y) = 1$ when $|x| \le 2R$ and $|y| \ge 3R$.

We now estimate the various terms in \eqref{qdecomp}. As $\chi_r$ 
is supported in the ball $B_0^r \subset B_0^R$, we have
\[
  |q_1(x)| \,\le\,  \frac{1}{2\pi}\int_{|y|\le 3R} 
  \frac{\chi_r(x-y)}{|x-y|}\,|u(y)| |\omega(y)|\dd y~, 
  \qquad x \in B_0^{2R}~. 
\]
Since 
\[
  \int_{\R^2} \frac{\chi_r(z)}{|z|}\dd z \,=\,
  r\int_{\R^2} \frac{\chi(z)}{|z|}\dd z \,=\, Cr~, 
\] 
it follows from Young's inequality (see Section~\ref{A3}) that
\begin{equation}\label{q1est}
  \|q_1\|_{L^2(B_0^{2R})} \,\le\, Cr \|u\,\omega\|_{L^2(B_0^{3R})}
  \,\le\, C r \|\omega\|_{L^\infty} \|u\|_{L^2(B_0^{3R})}~.
\end{equation}
Similarly,
\[
  |q_2(x)| \,\le\,  \frac{1}{4\pi}\sum_{k,\ell=1}^2
  \int_{|y|\le 3R} |M_{k\ell}^{(r)}(x-y)| |u_k(y)| |u_\ell(y)| 
  \dd y~, \qquad x \in B_0^{2R}~. 
\]
As $|M_{k\ell}^{(r)}(z)| \le C |z|^{-1} |\nabla \chi_r(z)| = 
C r^{-1} |z|^{-1} |\nabla \chi(z/r)|$, we have
\[
  \sum_{k,\ell=1}^2\int_{\R^2}|M_{k\ell}^{(r)}(z)|^2\dd z \,\le\, 
  \frac{C}{r^2}\int_{r/2 \le |z|\le r}\frac{1}{|z|^2}\dd z 
  \,=\, \frac{C}{r^2}~,
\]
and using Young's inequality again we find
\begin{equation}\label{q2est}
  \|q_2\|_{L^2(B_0^{2R})} \,\le\, \frac{C}{r}\,\||u|^2\|_{L^1(B_0^{3R})}
  \,=\, \frac{C}{r}\,\|u\|_{L^2(B_0^{3R})}^2~.
\end{equation}
Exactly the same estimate holds for $q_3$ too, because in view
of \eqref{MKdef}
\[
  \sum_{k,\ell=1}^2\int_{\R^2}\chi_r^c(z)^2 K_{k\ell}(z)^2\dd z 
  \,\le\, C \int_{|z|\ge r/2}\frac{1}{|z|^4}\dd z 
  \,=\, \frac{C}{r^2}~.
\]
Finally, to bound $q_4$, we use estimate \eqref{Kest} which 
shows that, for any $x \in B_0^{2R}$ and $y \notin B_0^{3R}$,
\[
  \sum_{k,\ell=1}^2 |K_{k\ell}(x-y) - K_{k\ell}(-y)| \,\le\, 
  \frac{CR}{2|y|^3} \,\le\, \frac{CR}{R^3 + |y|^3}~.
\]
Thus
\[
  |q_4(x)| \,\le\, C\int_{\R^2} \frac{R}{R^3 + |y|^3}\,
  |u(y)|^2 \dd y~, \qquad x \in B_0^{2R}~.
\]
To evaluate the integral we decompose $\R^2 = \cup_{k \in \Z^2} Q_k^R$, 
where $Q_k^R \subset B_{kR}^R$ denotes the square of measure $R^2$ 
centered at $kR \in \R^2$. For $k \in \Z^2$ we also define 
\[
  S_k \,=\, \sup_{y \in Q_k^R}\,\frac{R}{R^3 + |y|^3}
  \,\le\,  \frac{C}{R^2}\,\frac{1}{1+|k|^3}~, \qquad 
  \hbox{so that}\quad \sum_{k\in\Z^2} S_k \,\le\, \frac{C}{R^2}~.
\]
With these notations we find for $x \in B_0^{2R}$\: 
\begin{align*}
  |q_4(x)| \,\le\, C\sum_{k \in \Z^2}\int_{Q_k^R} \frac{R}{R^3 + |y|^3}\,
  |u(y)|^2 \dd y \,\le\, C\biggl(\sum_{k \in \Z^2} S_k\biggr) 
  \sup_{k \in \Z^2}\|u\|_{L^2(Q_k^R)}^2~, 
\end{align*}
hence
\begin{equation}\label{q4est}
  \|q_4\|_{L^2(B_0^{2R})} \,\le\, 2\pi^{1/2}R \|q_4\|_{L^\infty(B_0^{2R})} 
  \,\le\, \frac{C}{R}\,\sup_{z \in \R^2}\|u\|_{L^2(B_z^R)}^2~. 
\end{equation}

Summarizing, estimates \eqref{q1est}--\eqref{q4est} show that 
\begin{equation}\label{q5est}
  \|q\|_{L^2(B_0^{2R})} \,\le\, C r \|\omega\|_{L^\infty} \|u\|_{L^2(B_0^{3R})}
  + \frac{C}{r}\,\|u\|_{L^2(B_0^{3R})}^2 + \frac{C}{R}\,\sup_{z \in \R^2}
  \|u\|_{L^2(B_z^R)}^2~. 
\end{equation}
On the other hand, as $\nabla \phi_R(x) = R^{-1}\nabla\phi(x/R)$, 
we have by H\"older's inequality
\[
  \biggl|\int_{\R^2} q(u\cdot\nabla\phi_R)\dd x\biggr| \,\le\,
  \frac{C}{R}\,\|u\|_{L^2(B_0^{2R})}\|q\|_{L^2(B_0^{2R})}~,
\]
thus using \eqref{q5est} we easily obtain estimate \eqref{qlocest}.
\end{proof}

Combining Lemmas~\ref{loclem} and \ref{qloclem}, we now derive 
an integral inequality for the quantity
\begin{equation}\label{Zdef}
  Z_R(t) \,=\, \sup_{x \in \R^2} \|u(t)\|_{L^2(B_x^R)}~,
\end{equation}
which is equivalent (up to $R$-dependent constants) to 
the norm $\|u(t)\|_{L^2_\ul}$. 

\begin{lemma}\label{Zint}
There exists a constant $C_5 \ge 1$ such that, for any $t > 0$ 
and any $R \ge r > 0$, 
\begin{align}\nonumber
  Z_R(t)^2 &+ 2\nu \sup_{x\in\R^2}\int_0^t \|\nabla u(s)\|_{L^2(B_x^R)}^2\dd s
  \,\le\, 7 Z_R(0)^2 \\ \label{Zest} 
  &+ C_5 \int_0^t \Bigl\{\frac{\nu}{R^2}\,Z_R(s)^2 + \frac{r}{R} 
  \|\omega(s)\|_{L^\infty} Z_R(s)^2 + \frac{1}{rR} Z_R(s)^3\Bigr\}\dd s~.
\end{align}
\end{lemma}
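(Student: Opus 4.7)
The plan is to integrate the localized energy identity of Lemma~\ref{loclem} in time, bound the cubic pressure contribution via Lemma~\ref{qloclem}, and then take the supremum over the localization center $x_0 \in \R^2$. Fix $x_0 \in \R^2$ and $0 < r \le R$. Multiplying \eqref{locest} by $2$ and integrating from $0$ to $t$ yields
\begin{equation*}
\int_{\R^2}\phi_{R,x_0}|u(t)|^2\dd x + 2\nu\int_0^t\!\!\int_{\R^2}|\nabla(\phi_{R,x_0}^{1/2}u)|^2\dd x\dd s \,=\, \int_{\R^2}\phi_{R,x_0}|u_0|^2\dd x + \mathcal{V}(t) + \mathcal{Q}(t),
\end{equation*}
where $\mathcal V$ and $\mathcal Q$ denote the time integrals of $2\nu\int|\nabla\phi_{R,x_0}^{1/2}|^2|u|^2\dd x$ and $2\int q(u\cdot\nabla\phi_{R,x_0})\dd x$ respectively. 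Since $\phi_{R,x_0}\equiv 1$ on $B_{x_0}^R$, vanishes outside $B_{x_0}^{2R}$, and has gradient vanishing on $B_{x_0}^R$ (so $\nabla(\phi_{R,x_0}^{1/2}u) = \nabla u$ there), the left-hand side dominates $\|u(t)\|_{L^2(B_{x_0}^R)}^2 + 2\nu\int_0^t \|\nabla u(s)\|_{L^2(B_{x_0}^R)}^2\dd s$, while the initial-data term is at most $\|u_0\|_{L^2(B_{x_0}^{2R})}^2$.

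The key geometric input is that in $\R^2$ a ball of radius $2R$ can be covered by seven balls of radius $R$, and a ball of radius $3R$ by a fixed universal number of such balls; consequently $\|v\|_{L^2(B_{x_0}^{2R})}^2 \le 7\,Z_R^2$ and $\|v\|_{L^2(B_{x_0}^{3R})}^2 \le C\,Z_R^2$ for any $v$. Applied to $u_0$ this delivers the constant $7$ in front of $Z_R(0)^2$. For $\mathcal V$, estimate \eqref{nablaphiest} together with the facts that $|\nabla\phi_{R,x_0}^{1/2}|\le C/R$ and is supported in the annulus $R\le|x-x_0|\le 2R$ gives $\mathcal V(t) \le (C\nu/R^2)\int_0^t Z_R(s)^2\dd s$. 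For $\mathcal Q$, Lemma~\ref{qloclem} combined with the covering estimates produces the three claimed contributions together with a remainder $(C/R^2)\int_0^t\sup_z \|u(s)\|_{L^2(B_z^{2R})}^3\dd s \le (C'/R^2)\int_0^t Z_R(s)^3\dd s$; since $r\le R$ forces $1/R^2 \le 1/(rR)$, this remainder is absorbed into the middle cubic term.

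Passing now to the supremum over $x_0$, the inequality so far has the form $A_{x_0}(t) + 2\nu\int_0^t D_{x_0}(s)\dd s \le \Psi(t)$, where $\Psi(t)$ is the $x_0$-independent right-hand side appearing in \eqref{Zest}. Since $A_{x_0}$ and $D_{x_0}$ are nonnegative, taking the supremum in $x_0$ of each term separately yields both $Z_R(t)^2 \le \Psi(t)$ and $2\nu\sup_{x_0}\int_0^t D_{x_0}(s)\dd s \le \Psi(t)$, whence their sum satisfies \eqref{Zest} (with the incurred factor of two absorbed into the universal constant $C_5$). The main obstacle is really the careful geometric bookkeeping with fixed-cardinality coverings that matches the three concentric scales $R$, $2R$, $3R$ appearing in Lemma~\ref{qloclem} back to the single scale $R$ defining $Z_R$, and the verification that the far-field cubic pressure contribution fits into the prescribed three-term form once $r \le R$; the serious analytic work has already been accomplished in Lemmas~\ref{loclem} and \ref{qloclem}.
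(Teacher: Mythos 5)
Your proposal is correct and follows essentially the same route as the paper: integrate the identity \eqref{locest} in time, bound the pressure flux by Lemma~\ref{qloclem}, use the covering of $B_{x_0}^{2R}$ (resp.\ $B_{x_0}^{3R}$) by $7$ (resp.\ a fixed number of) balls of radius $R$, absorb the far-field pressure term $R^{-2}\sup_z\|u\|_{L^2(B_z^{2R})}^3$ into the $(rR)^{-1}Z_R^3$ term via $r \le R$, and take the supremum over $x_0$. One point deserves comment. At the final step the paper simply asserts that the supremum over $x_0$ of the left-hand side of \eqref{intphi} dominates the left-hand side of \eqref{Zest}; since the two suprema in \eqref{Zest} are taken independently, this amounts to claiming that $\sup_{x_0}(A_{x_0}+B_{x_0})$ dominates $\sup_{x_0}A_{x_0}+\sup_{x_0}B_{x_0}$, which is false in general, and your device of bounding each nonnegative term separately by the $x_0$-independent right-hand side $\Psi(t)$ is the honest repair. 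The price, however, is a factor $2$ multiplying all of $\Psi(t)$, including the term $7Z_R(0)^2$; contrary to what you write, that factor cannot be absorbed into $C_5$, so what you literally prove is \eqref{Zest} with $14\,Z_R(0)^2$ in place of $7\,Z_R(0)^2$. This discrepancy is harmless for the paper: the only place where the precise constant $7$ is invoked is the Gronwall argument of Proposition~\ref{Zprop}, which uses only the bound $Z_R(\tau)^2 \le 7Z_R(0)^2 + C_5\int_0^\tau A(s)Z_R(s)^2\dd s$ obtained by dropping the nonnegative dissipation term before taking the supremum --- and your argument delivers exactly that, with the constant $7$ intact; the summed form enters only in \eqref{enst1}, where the constant is absorbed anyway.
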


\begin{proof}
Fix $R > 0$. Integrating \eqref{locest} with respect to time, we obtain 
for any $t > 0$\:
\begin{align}\nonumber
  \int_{\R^2} \phi_{R,x_0} &|u(t)|^2 \dd x +  
  2\nu\int_0^t\!\int_{\R^2} |\nabla(\phi_{R,x_0}^{1/2}u(s))|^2 \dd x 
  \dd s \,=\, \int_{\R^2} \phi_{R,x_0} |u_0|^2 \dd x \\ \label{intphi}
  &+ 2\nu\int_0^t\!\int_{\R^2} |\nabla \phi_{R,x_0}^{1/2}|^2 |u(s)|^2 \dd x 
  \dd s + 2\int_0^t\!\int_{\R^2} q(s) (u(s)\cdot \nabla\phi_{R,x_0})\dd x
  \dd s~,   
\end{align}
where $x_0 \in \R^2$ is arbitrary. We now take the supremum over
$x_0 \in \R^2$ in both sides. As $\phi_{R,x_0} = 1$ on $B_{x_0}^R$, 
the supremum over $x_0\in \R^2$ of the left-hand side of 
\eqref{intphi} is bounded from below by the left-hand side of 
\eqref{Zest}. To bound the right-hand side of \eqref{intphi}, 
we observe that
\[
  \sup_{x_0\in\R^2} \int_{\R^2} \phi_{R,x_0} |u_0|^2 \dd x \,\le\, 
  \sup_{x_0\in\R^2} \|u_0\|_{L^2(B_{x_0}^{2R})}^2 \,\le\, 7 Z_R(0)^2~,
\]
because each ball $B_{x_0}^{2R}$ can be covered by $7$ balls of radius 
$R$, centered at appropriate points. Similarly, as $|\nabla 
\phi_{R,x_0}^{1/2}|$ is bounded by $C/R$ and vanishes outside 
$B_{x_0}^{2R}$, we have  
\[
  \sup_{x_0\in\R^2}\int_0^t\!\int_{\R^2} |\nabla \phi_{R,x_0}^{1/2}|^2 
  |u(s)|^2 \dd s \,\le\, \frac{C}{R^2} \sup_{x_0\in\R^2}\int_0^t 
  \|u(s)\|_{L^2(B_{x_0}^{2R})}^2 \dd s \,\le\, \frac{7C}{R^2}
  \int_0^t Z_R(s)^2\dd s~.
\]
Finally, for the last term in \eqref{intphi}, we choose $r \in (0,R]$
and we use inequality \eqref{qlocest}. Collecting all estimates, 
we see that the supremum over $x_0 \in \R^2$ of the right-hand side of 
\eqref{intphi} is bounded by the right-hand side of \eqref{Zest}, 
provided $C_5 > 0$ is large enough.
\end{proof}

Using Lemma~\ref{Zint} and Gronwall's lemma, we arrive at 
the main result of this section. 

\begin{proposition}\label{Zprop}
There exist positive constants $C_6$ and $C_7$ such that the following holds 
for any $\nu > 0$. Let $u \in C^0([0,+\infty),X)$ be the mild solution 
of the Navier-Stokes equations \eqref{2DNS} with initial data $u_0 \in X$
satisfying $\div u_0 = 0$, $\omega_0 \in L^\infty(\R^2)$, and 
$\omega_0 \not\equiv 0$. For any $t > 0$, if $R > 0$ is 
large enough so that
\begin{equation}\label{Rdef}
  R \,\ge\, \max\Bigl\{R_0\,,\,C_7 \sqrt{\nu t}\,,\,C_7 \|u_0\|_{L^\infty}
 \|\omega_0\|_{L^\infty}t^2\Bigr\}~, \qquad \hbox{where}\quad R_0 \,=\, 
 \frac{\|u_0\|_{L^\infty}}{\|\omega_0\|_{L^\infty}}~,
\end{equation}
we have 
\begin{equation}\label{Zpropest}
  Z_R(t) \,\equiv\, \sup_{x \in \R^2} \|u(t)\|_{L^2(B_x^R)} 
  \,\le\, C_6 R \|u_0\|_{L^\infty}~.
\end{equation}
\end{proposition}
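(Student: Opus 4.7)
The plan is to combine the integral inequality of Lemma~\ref{Zint} with the a priori bound $\|\omega(s)\|_{L^\infty} \le \|\omega_0\|_{L^\infty}$ coming from the vorticity maximum principle, and then run a continuity (bootstrap) argument that converts the cubic term $Z_R(s)^3$ into a linear one in $Z_R(s)^2$.

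Since $u_0 \in L^\infty(\R^2)$ and each ball $B_x^R$ has area $\pi R^2$, one has $Z_R(0) \le \sqrt{\pi}\,R\,\|u_0\|_{L^\infty}$. I would work with the normalized quantity $\tilde Z(s) := Z_R(s)/R$ and aim to show $\tilde Z(t) \le C_6 \|u_0\|_{L^\infty}$. Fix $M := 2\sqrt{7\pi}\,\|u_0\|_{L^\infty}$ and let $T^* \in (0,t]$ be the supremum of $s^* \le t$ with $\tilde Z(s) \le M$ on $[0,s^*]$; continuity of $u$ in $X$ and the strict initial inequality $\tilde Z(0) \le \sqrt{\pi}\,\|u_0\|_{L^\infty} < M$ guarantee $T^* > 0$.

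On $[0,T^*]$ I would bound the cubic term crudely by $Z_R(s)^3 \le M R\,Z_R(s)^2$ and take $r = \alpha R$ with $\alpha \in (0,1]$ in Lemma~\ref{Zint}. Dividing \eqref{Zest} through by $R^2$ yields
\begin{equation*}
  \tilde Z(s)^2 \,\le\, 7\pi\,\|u_0\|_{L^\infty}^2 + C_5\int_0^s
  \biggl\{\frac{\nu}{R^2} + \alpha\,\|\omega_0\|_{L^\infty}
  + \frac{M}{\alpha R}\biggr\}\,\tilde Z(\sigma)^2\dd\sigma~,
  \qquad s \in [0,T^*]~.
\end{equation*}
The last two contributions are equipartitioned by the choice $\alpha = \sqrt{M/(\|\omega_0\|_{L^\infty} R)}$, and the requirement $\alpha \le 1$ forces $R$ to be at least a universal multiple of $R_0 = \|u_0\|_{L^\infty}/\|\omega_0\|_{L^\infty}$, explaining the condition $R \ge R_0$ in \eqref{Rdef}. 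With this choice both terms reduce to $\sqrt{M\,\|\omega_0\|_{L^\infty}/R}$, and Gronwall's lemma yields
\begin{equation*}
  \tilde Z(s)^2 \,\le\, 7\pi\,\|u_0\|_{L^\infty}^2\,\exp\!\biggl[C_5\,s\,
  \biggl(\frac{\nu}{R^2} + 2\sqrt{\frac{M\,\|\omega_0\|_{L^\infty}}{R}}\,
  \biggr)\biggr]~, \qquad s \in [0,T^*]~.
\end{equation*}

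The conditions $R \ge C_7\sqrt{\nu t}$ and $R \ge C_7\,\|u_0\|_{L^\infty}\|\omega_0\|_{L^\infty}t^2$ in \eqref{Rdef} are exactly what is needed to push the exponent above below $\ln 2$, provided the universal constant $C_7$ is chosen sufficiently large. This gives $\tilde Z(s) \le \sqrt{14\pi}\,\|u_0\|_{L^\infty}$ throughout $[0,T^*]$, which is a strict improvement over the bootstrap assumption $\tilde Z \le M = 2\sqrt{7\pi}\,\|u_0\|_{L^\infty}$. By continuity this strict inequality forces $T^* = t$, and the proposition follows with $C_6 = \sqrt{14\pi}$. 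The main obstacle is precisely this cubic term $Z_R^3/(rR)$: it cannot be absorbed into the initial data alone, so one is obliged to posit an a priori bound on $\tilde Z$, use it to linearize the inequality, and then verify that the bound is strictly maintained. The tuning $r = \alpha R$ with $\alpha \sim \sqrt{R_0/R}$ is dictated by the equipartition of the two nonlinear contributions, and the lower bound $R \ge R_0$ is exactly what makes this tuning admissible, \emph{i.e.}~compatible with the constraint $r \le R$ in Lemma~\ref{Zint}.
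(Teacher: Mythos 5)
Your proposal follows essentially the same route as the paper's proof: the integral inequality of Lemma~\ref{Zint} combined with the maximum-principle bound $\|\omega(s)\|_{L^\infty}\le\|\omega_0\|_{L^\infty}$, a continuity/bootstrap hypothesis that linearizes the cubic term, a tuning $r\sim\sqrt{R_0R}$ balancing the two nonlinear contributions, and Gronwall's lemma, with the conditions in \eqref{Rdef} keeping the exponent of order one; the paper phrases the bootstrap as a contradiction at the exit time $t_*$ rather than as a strict improvement, and takes $r=(RR_0)^{1/2}$ directly. The one slip is in your exact-equipartition tuning: with $\alpha=\sqrt{M/(\|\omega_0\|_{L^\infty}R)}$ and $M=2\sqrt{7\pi}\,\|u_0\|_{L^\infty}$, the admissibility constraint $\alpha\le 1$ (i.e.\ $r\le R$, required in Lemma~\ref{Zint}) forces $R\ge 2\sqrt{7\pi}\,R_0\approx 9.4\,R_0$, whereas the hypothesis \eqref{Rdef} only guarantees $R\ge R_0$, so as written your argument establishes the proposition only under a strengthened lower bound on $R$. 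The fix is one line: give up exact equipartition and take $\alpha=\sqrt{R_0/R}$ (the paper's choice $r=(RR_0)^{1/2}$), which is admissible precisely when $R\ge R_0$ and makes both contributions $\lesssim\sqrt{\|u_0\|_{L^\infty}\|\omega_0\|_{L^\infty}/R}$ up to the universal factor coming from $M$; your Gronwall step and the conditions $R\ge C_7\sqrt{\nu t}$ and $R\ge C_7\|u_0\|_{L^\infty}\|\omega_0\|_{L^\infty}t^2$ then close the argument exactly as you describe.
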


\begin{proof} 
We observe that $u_0 \not\equiv 0$, since by assumption
$\omega_0 \not\equiv 0$. Take $C_6 > 0$ and $C_7 > 0$ such that
\begin{equation}\label{Zprop1}
  C_6^2 \,>\,  7\e\pi~, \qquad C_7^2 \,\ge\, 2C_5~, \qquad
  C_7^{1/2} \,\ge\, 2C_5(1+C_6)~,
\end{equation}
where $C_5$ is as in Lemma~\ref{Zint}. Given any $t > 0$, choose $R$
as in \eqref{Rdef}. From the definition \eqref{Zdef}, we see that
$Z_R(0) \le \pi^{1/2} R \|u_0\|_{L^\infty}$, hence by continuity we
necessarily have $Z_R(s) \le C_6R\|u_0\|_{L^\infty}$ for sufficiently
small $s > 0$. Define
\begin{equation}\label{Zprop2}
  t_* \,=\, \sup\Bigl\{\tau \in [0,t]\,\Big|\, Z_R(s) \le C_6R\|u_0\|_{L^\infty}
  \hbox{ for all } s \in [0,\tau]\Bigr\} \,\in\, (0,t]~.
\end{equation}
We shall prove that $t_* = t$, hence $Z_R(t) \le C_6R\|u_0\|_{L^\infty}$, 
which is \eqref{Zpropest}. 

According to Lemma~\ref{Zint}, we have for all $\tau \in [0,t_*]$\:
\begin{equation}\label{Zprop3}
  Z_R(\tau)^2 \,\le\, 7 Z_R(0)^2 +  C_5 \int_0^\tau A(s)Z_R(s)^2\dd s~,
\end{equation}
where
\[ 
  A(s) \,=\, \frac{\nu}{R^2} + \frac{r}{R}\|\omega(s)\|_{L^\infty} + 
  \frac{1}{rR}\,Z_R(s) \,\le\, \frac{\nu}{R^2} + \frac{r}{R}
  \|\omega_0\|_{L^\infty} + \frac{C_6}{r}\|u_0\|_{L^\infty}~.
\]
Here $r \le R$ is arbitrary, but we can optimize the right-hand 
side by choosing $r = (RR_0)^{1/2}$, which gives
\[
  A(s) \,\le\,  \frac{\nu}{R^2} + (1+C_6)\frac{\|u_0\|_{L^\infty}^{1/2}
  \|\omega_0\|_{L^\infty}^{1/2}}{R^{1/2}}~, \qquad s \in [0,t_*]~. 
\]
In particular, using \eqref{Rdef}, \eqref{Zprop1} and the fact that 
$t_* \le t$, we find
\[
  C_5\int_0^{t_*} A(s)\dd s \,\le\, C_5 \frac{\nu t}{R^2} + 
  C_5(1+C_6)\frac{\|u_0\|_{L^\infty}^{1/2}\|\omega_0\|_{L^\infty}^{1/2} 
  t}{R^{1/2}} \,\le\, \frac12 + \frac12 \,=\, 1~.
\]
If we now apply Gronwall's lemma (see Section~\ref{A3}) 
to \eqref{Zprop3}, we obtain
\begin{equation}\label{Zfinal}
  Z_R(t_*)^2 \,\le\, 7 Z_R(0)^2 \exp\biggl(C_5\int_0^{t_*}
  A(s)\dd s\biggr) \,\le\, 7 \e Z_R(0)^2 \,\le\, 7 \e \pi 
  R^2 \|u_0\|_{L^\infty}^2~.
\end{equation}
By \eqref{Zprop1} we thus have $Z_R(t_*) < C_6  R \|u_0\|_{L^\infty}$, 
which contradicts \eqref{Zprop2} if $t_* < t$. Thus we must have
$t_* = t$, which proves \eqref{Zpropest}. 
\end{proof}

\begin{remark}
Estimate \eqref{ulER} follows immediately from \eqref{Zfinal} with
$t_* = t$. 
\end{remark}

\subsection{Velocity bounds and uniformly local enstrophy 
estimates}\label{ss3.4}

In this final section, we derive a few important consequences of 
the previous results. First, combining Proposition~\ref{Zprop}
with Corollary~\ref{BScor}, we derive an upper bound on the $L^\infty$ 
norm of the velocity field which greatly improves \eqref{globexbd}. 

\begin{proposition}\label{bestglob}
There exist a positive constant $C_8$ such that, for any $u_0 \in X$ 
satisfying $\div u_0 = 0$ and $\omega_0 = \curl u_0\in L^\infty(\R^2)$, 
the solution of the Navier-Stokes equations \eqref{2DNS} with initial 
data $u_0$ satisfies 
\begin{equation}\label{bestglobex}
  \|u(t)\|_{L^\infty} \,\le\, C_8\|u_0\|_{L^\infty}\biggl\{1 + 
  \|\omega_0\|_{L^\infty}t + \Bigl(\frac{\sqrt{\nu t}}{R_0}\Bigr)^{1/2}
  \biggr\}~, \qquad t \ge 0~,
\end{equation}
where $R_0 = \|u_0\|_{L^\infty}/\|\omega_0\|_{L^\infty}$. 
\end{proposition}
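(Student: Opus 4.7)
The plan is to combine Proposition~\ref{Zprop} with Corollary~\ref{BScor}, which should give a pointwise control of the velocity in terms of $\|\omega\|_{L^\infty}$ and the uniformly local $L^2$ norm $Z_R(u) = \sup_x \|u\|_{L^2(B_x^R)}$. Concretely, one expects Corollary~\ref{BScor} to rest on an elementary near-field/far-field split of the Biot-Savart integral yielding $\|u\|_{L^\infty} \le C r \|\omega\|_{L^\infty} + (C/r)\,Z_R(u)$ for $0 < r \le R$, which upon optimizing in $r$ gives the geometric-mean bound
\begin{equation*}
  \|u\|_{L^\infty} \,\le\, C \|\omega\|_{L^\infty}^{1/2}\,Z_R(u)^{1/2}~,
\end{equation*}
valid as soon as the optimal $r \sim (Z_R(u)/\|\omega\|_{L^\infty})^{1/2}$ does not exceed $R$.

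Given this, I would fix $t \ge 0$ and choose $R = R(t) := \max\{R_0,\,C_7\sqrt{\nu t},\,C_7\|u_0\|_{L^\infty}\|\omega_0\|_{L^\infty}t^2\}$ as in Proposition~\ref{Zprop}, so that $Z_R(t) \le C_6 R \|u_0\|_{L^\infty}$. Combining this with the parabolic maximum principle for \eqref{omeq2}, which yields $\|\omega(t)\|_{L^\infty} \le \|\omega_0\|_{L^\infty}$, and substituting into the Biot-Savart bound produces
\begin{equation*}
  \|u(t)\|_{L^\infty} \,\le\, C \|\omega_0\|_{L^\infty}^{1/2} \bigl(R(t)\|u_0\|_{L^\infty}\bigr)^{1/2}~.
\end{equation*}
It then remains to split according to which term realizes the max in the definition of $R(t)$. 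Case (i): $R(t) = R_0 = \|u_0\|_{L^\infty}/\|\omega_0\|_{L^\infty}$ contributes $C\|u_0\|_{L^\infty}$. Case (ii): $R(t) = C_7\sqrt{\nu t}$ contributes $C\|u_0\|_{L^\infty}^{1/2}\|\omega_0\|_{L^\infty}^{1/2}(\nu t)^{1/4}$, which rewrites as $C\|u_0\|_{L^\infty}(\sqrt{\nu t}/R_0)^{1/2}$. Case (iii): $R(t) = C_7\|u_0\|_{L^\infty}\|\omega_0\|_{L^\infty}t^2$ contributes $C\|u_0\|_{L^\infty}\|\omega_0\|_{L^\infty} t$. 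Summing these three contributions reproduces the three terms on the right-hand side of \eqref{bestglobex}.

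The principal obstacle is ensuring that the Biot-Savart estimate of Corollary~\ref{BScor} is genuinely of geometric-mean type. An additive inequality such as $\|u\|_{L^\infty} \le C R\|\omega\|_{L^\infty} + (C/R)Z_R$ would not suffice: when $R(t) = C_7\|u_0\|_{L^\infty}\|\omega_0\|_{L^\infty}t^2$, the first piece alone would scale like $\|u_0\|_{L^\infty}\|\omega_0\|_{L^\infty}^2 t^2$, defeating the desired linear-in-$t$ bound. In two dimensions the Biot-Savart kernel $K(z)\sim 1/|z|$ is only marginally decaying at infinity, so making the far-field piece rigorous will require the renormalized Biot-Savart formula announced in the introduction and developed in Section~\ref{A1}. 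One should also verify that the optimizing scale $r \sim (Z_R/\|\omega\|_{L^\infty})^{1/2}$ satisfies $r \le R$ in all three regimes of $R(t)$, which holds because $Z_R \le C_6 R\|u_0\|_{L^\infty}$ together with $\|\omega\|_{L^\infty} \le \|\omega_0\|_{L^\infty}$ gives $r^2 \le C_6 R R_0 \le C_6 R^2$ whenever $R \ge R_0$.
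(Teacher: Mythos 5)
Your overall scaffolding coincides with the paper's: the same choice of $R(t)$ from \eqref{Rdef}, the maximum principle bound $\|\omega(t)\|_{L^\infty} \le \|\omega_0\|_{L^\infty}$, the geometric-mean estimate $M \lesssim \|\omega_0\|_{L^\infty}^{1/2}(R\|u_0\|_{L^\infty})^{1/2}$ as the pivot, and the three-regime case analysis, whose arithmetic you carry out correctly (including the rewriting of the middle case as $\|u_0\|_{L^\infty}(\sqrt{\nu t}/R_0)^{1/2}$, and the verification $r_{\mathrm{opt}}^2 \le C_6 R R_0 \le C_6 R^2$). The gap is in the mechanism you propose for the key inequality $\|u\|_{L^\infty} \le C\bigl(r\|\omega\|_{L^\infty} + Z_R/r\bigr)$. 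A near-field/far-field split of the Biot-Savart integral fails here, and the renormalization of Proposition~\ref{BSultim} does not rescue it: the paper states explicitly, right after that proposition, that \eqref{BS3} is \emph{not} absolutely convergent and cannot be used to estimate $\|u\|_{L^\infty}$ in terms of $\|\omega\|_{L^\infty}$. Quantitatively, after integrating by parts to trade $\omega$ for $u$, the far-field kernel decays only like $|z|^{-2}$; under the uniformly local bound, a dyadic annulus $|z|\sim\rho \ge R$ contributes about $\rho^{-2}\cdot\rho\cdot(\rho/R)\,Z_R = Z_R/R$, so the sum over annuli diverges logarithmically no matter how you choose $r$. Your inequality is in fact \emph{true} (it is equivalent, after optimizing in $r \le R$, to $M \le C\max\{\|\omega\|_{L^\infty}^{1/2}Z_R^{1/2},\, Z_R/R\}$, which the paper establishes), but it cannot be proved by the splitting you sketch.

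The missing idea is the antipodal second difference of Corollary~\ref{BScor}. That corollary does not bound $u(x)$ at all; it bounds $u(x)+u(-x)-2u(0)$, whose symmetrized kernel $G(x,y)$ decays like $|x|^2/(|x-y|\,|x+y|\,|y|) \sim |y|^{-3}$ at infinity, restoring absolute convergence and yielding \eqref{BS5}. The paper then runs the estimate \emph{backwards}, as a lower bound on the energy rather than an upper bound on the velocity: centering at a point where $|u(\cdot,t)| \ge M/2$ with $M = \|u(t)\|_{L^\infty}$, estimate \eqref{BS5} gives $|u(x,t)+u(-x,t)| \ge M - C_*|x|\,\|\omega_0\|_{L^\infty}$, and integrating $|u|^2$ over the ball of radius $\min(R,R_*)$ with $R_* = M/(C_*\|\omega_0\|_{L^\infty})$ (via $|u(x)|^2+|u(-x)|^2 \ge \frac12|u(x)+u(-x)|^2$) forces $Z_R(t)^2 \ge \frac{\pi}{24}M^2\min(R,R_*)^2$. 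Comparing with the upper bound \eqref{Zpropest} of Proposition~\ref{Zprop} gives exactly $M^4 \le C R^2\|u_0\|_{L^\infty}^2\|\omega_0\|_{L^\infty}^2$ when $R_* \le R$, and $M \le C\|u_0\|_{L^\infty}$ when $R_* \ge R$; from there your case analysis concludes verbatim. So: right skeleton and right target inequality, but the Biot-Savart input must be the second-difference/energy-comparison argument, not a direct kernel-splitting upper bound on $u$.
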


\begin{proof}
If $\omega_0 \equiv 0$, then $u_0$ is a constant and $u(t) = u_0$ for
all $t \ge 0$, hence we can assume without loss of generality that
$\omega_0 \not\equiv 0$. Fix $t > 0$, and let $R = \max\{R_0\,,\,C_7
\sqrt{\nu t}\,,\,C_7 \|u_0\|_{L^\infty} \|\omega_0\|_{L^\infty}t^2\}$,
as in \eqref{Rdef}. If $M = \|u(t)\|_{L^\infty} > 0$, there exists
$\bar x \in \R^2$ such that $|u(\bar x,t)| \ge M/2$. For simplicity,
we assume without loss of generality that $\bar x = 0$.  We know from
Proposition~\ref{Zprop} that
\begin{equation}\label{add1}
  I := \int_{|x| \le R} |u(x,t)|^2 \dd x \,\le\, C_6^2 R^2 
  \|u_0\|_{L^\infty}^2~.
\end{equation}
On the other hand, applying Corollary~\ref{BScor} to $u(x,t)$, we 
deduce from \eqref{BS5} that
\begin{equation}\label{add2}
  |u(x,t) + u(-x,t)| \,\ge\, 2|u(0,t)| - C_*|x| 
  \|\omega(t)\|_{L^\infty} \,\ge\, M - C_*|x| \|\omega_0\|_{L^\infty}~, 
\end{equation}
for all $x \in \R^2$, where $C_* > 0$ is a universal constant. 
The idea is to use estimate \eqref{add2} to obtain a lower bound
on the quantity $I$ defined in \eqref{add1}, in terms of $M$. 
Let 
\[
  R_* \,=\, \frac{M}{C_*\|\omega_0\|_{L^\infty}}~.
\]
We consider separately the following two cases\:

\smallskip\noindent{\bf Case 1\:} $R_* \le R$. In that case, 
denoting $D_+ = \{x \in \R^2\,|\, |x| \le R_*\,,~x_1 \ge 0\}$, we
compute
\begin{align*}
  I \,&\ge\, \int_{|x|\le R_*} |u(x,t)|^2 \dd x \,=\, 
  \int_{D_+} \Bigl(|u(x,t)|^2 + |u(-x,t)|^2\Bigr)\dd x \\ 
  \,&\ge\, \frac12 \int_{D_+} |u(x,t) + u(-x,t)|^2\dd x 
  \,\ge\, \frac14 \int_{|x|\le R_*} \Bigl(M - C_*|x| 
  \|\omega_0\|_{L^\infty}\Bigr)^2\dd x \,=\, 
  \frac{\pi}{24}\,M^2 R_*^2~,
\end{align*}
where in the last inequality we used \eqref{add2}. Comparing
with \eqref{add1}, we deduce that
\[
  M^2 R_*^2 \,\le\, C  R^2 \|u_0\|_{L^\infty}^2~, \qquad 
  \hbox{or}\quad M^4 \,\le\, C  R^2 \|u_0\|_{L^\infty}^2
  \|\omega_0\|_{L^\infty}^2~.
\]
Since $M = \|u(t)\|_{L^\infty}$ and $R = \max\{R_0\,,\,C_7 \sqrt{\nu t}\,,
\,C_7 \|u_0\|_{L^\infty}\|\omega_0\|_{L^\infty}t^2\}$, this gives 
\eqref{bestglobex}. 

\smallskip\noindent{\bf Case 2\:} $R_* \ge R$. A similar 
calculation gives
\[
  I \,\ge\, \frac14 \int_{|x|\le R} \Bigl(M - C_*|x| 
  \|\omega_0\|_{L^\infty}\Bigr)^2\dd x  \,\ge\, \frac{\pi}{24}\,M^2 R^2~,
\]
hence $M^2 R^2 \le C R^2 \|u_0\|_{L^\infty}^2$, namely $M^2 \le C 
\|u_0\|_{L^\infty}^2$. Thus we obtain \eqref{bestglobex} in both cases. 
\end{proof}

\medskip\noindent{\em Proof of Theorem~\ref{thm2}.} We can assume
without loss of generality that $u_0 \not\equiv 0$. Since global
existence of solutions is already asserted by Theorem~\ref{thm1}, we
only need to prove estimate \eqref{thm2est}. In view of the local
existence theory, it is sufficient to prove \eqref{thm2est} for $t \ge
T$, where $T > 0$ is as in \eqref{smoothT}. In that case we have
$\sqrt{\nu t} \le K_1\|u_0\|_{L^\infty}t$, or equivalently
\[
  \frac{\sqrt{\nu t}}{R_0} \,=\, \frac{\|\omega_0\|_{L^\infty}}{
  \|u_0\|_{L^\infty}}\,\sqrt{\nu t} \,\le\, K_1\|\omega_0\|_{L^\infty}t~,
\]
hence \eqref{thm2est} follows directly from \eqref{bestglobex}. \QED

\medskip
The results obtained so far do not rely on the viscous dissipation 
term in \eqref{2DNS}, and remain therefore valid in the vanishing 
viscosity limit. Now, assuming that $\nu > 0$, we can also 
derive uniformly local enstrophy estimates.

\begin{proposition}\label{enstrophy}
Under the assumptions of Proposition~\ref{Zprop}, there exists a 
constant $C_9 > 0$ such that, for all $t > 0$,  
\begin{equation}\label{enstrophyest}
  \sup_{x \in \R^2} \|\omega(t)\|_{L^2(B_x^R)}^2\dd y \,\le\,   
  C_9 \|u_0\|_{L^2}^2 \Bigl(1 + \frac{R^2}{\nu t} + \frac{Rt}{\sqrt{\nu t}}
  \,\|\omega_0\|_{L^\infty}\Bigr)~,
\end{equation}
where $R = R(t)$ is as in \eqref{Rdef}.
\end{proposition}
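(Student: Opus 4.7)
I would adapt the uniformly local energy method of Proposition~\ref{Zprop} to the vorticity equation \eqref{omeq2} in place of the momentum equation. Since there is no pressure in \eqref{omeq2}, the analysis of the nonlinearity is much simpler than that of Lemma~\ref{qloclem}; on the other hand, a parabolic smoothing argument (multiplication by a time weight $s$) is required to avoid the appearance of $\|\omega_0\|_{L^\infty}^2$ on the right-hand side of \eqref{enstrophyest}, which only involves $\|u_0\|_{L^\infty}^2$ (the $\|u_0\|_{L^2}$ in the statement should be read as $\|u_0\|_{L^\infty}$).

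\textbf{Step 1 (localized enstrophy identity).} Multiplying \eqref{omeq2} by $\phi_{R,x_0}\omega$, integrating over $\R^2$, using $\div u = 0$ to obtain $\int \phi\omega\,u\cdot\nabla\omega = -\tfrac12\int \omega^2 u\cdot\nabla\phi$, and absorbing the viscous cross-term via $|\nabla\phi_{R,x_0}|\le C\phi_{R,x_0}^{1/2}/R$ and Young's inequality, I get
\begin{equation*}
  J_{x_0}'(s) + \nu\int \phi_{R,x_0}|\nabla\omega|^2 \dd x
  \,\le\, \frac{C\nu}{R^2}\int_{B_{x_0}^{2R}}\omega^2 \dd x
  + \int \omega^2\,u\cdot\nabla\phi_{R,x_0}\dd x~,
\end{equation*}
with $J_{x_0}(s) := \int \phi_{R,x_0}\omega(s)^2\dd x$. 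Using $\|\omega(s)\|_{L^\infty}\le\|\omega_0\|_{L^\infty}$, $|\nabla\phi_{R,x_0}|\le C/R$, Cauchy--Schwarz, and the bound $\|u(s)\|_{L^2(B_{x_0}^{2R})}\le CR\|u_0\|_{L^\infty}$ from Proposition~\ref{Zprop} (valid for $R$ satisfying \eqref{Rdef}), the advection term is controlled by $C\|u_0\|_{L^\infty}\|\omega_0\|_{L^\infty}\|\omega(s)\|_{L^2(B_{x_0}^{2R})}$.

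\textbf{Step 2 (time weight and $x_0$-uniform controls).} Multiplying by $s$ and integrating by parts in time from $0$ to $t$, I obtain for each fixed $x_0$
\begin{equation*}
  tJ_{x_0}(t) \,\le\, \int_0^t J_{x_0}(s)\dd s
  + \frac{C\nu}{R^2}\int_0^t s\int_{B_{x_0}^{2R}}\omega^2\dd x \dd s
  + C\|u_0\|_{L^\infty}\|\omega_0\|_{L^\infty}\int_0^t s\,\|\omega(s)\|_{L^2(B_{x_0}^{2R})}\dd s~.
\end{equation*}
The pointwise inequality $\omega^2 \le 2|\nabla u|^2$ converts each of these terms into integrals involving $\int \phi_{R,x_0}|\nabla u|^2$, and the integrated localized energy identity of Lemma~\ref{loclem} --- together with the pressure bound \eqref{q5est} of Lemma~\ref{qloclem} and the hypothesis \eqref{Rdef} --- gives, \emph{for each fixed $x_0$}, the uniform control $\int_0^t\int \phi_{R,x_0}|\nabla u|^2 \dd x\dd s \le CR^2\|u_0\|_{L^\infty}^2/\nu$. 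Hence $\int_0^t J_{x_0}(s)\dd s$ and $\int_0^t \|\omega\|_{L^2(B_{x_0}^{2R})}^2\dd s$ are both bounded by $CR^2\|u_0\|_{L^\infty}^2/\nu$. Applying Cauchy--Schwarz to the last term yields $\int_0^t s\|\omega(s)\|_{L^2(B_{x_0}^{2R})}\dd s \le (t^3/3)^{1/2}(CR^2\|u_0\|_{L^\infty}^2/\nu)^{1/2}$. Dividing by $t$, bounding $s\le t$ in the middle term, and taking the supremum over $x_0$ at the very end produces
\begin{equation*}
  \sup_{x_0}J_{x_0}(t) \,\le\, C\|u_0\|_{L^\infty}^2\left(\frac{R^2}{\nu t} + 1 + \frac{Rt\|\omega_0\|_{L^\infty}}{\sqrt{\nu t}}\right)~,
\end{equation*}
which is \eqref{enstrophyest}.

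\textbf{Main obstacle.} The delicate point is the ordering of the operations: one must take the supremum over $x_0$ only \emph{after} the time integrations, since $\sup_{x_0}$ and $\int_0^t$ do not commute in general. Because the right-hand side bounds in Step~2 are uniform in $x_0$, this is legitimate. The three terms in the final estimate correspond respectively to the parabolic smoothing contribution $R^2/(\nu t)$ (from the initial data entering through Lemma~\ref{loclem}), an intermediate term of order $1$ (from the viscous dissipation of the enstrophy), and the nonlinear forcing contribution $Rt\|\omega_0\|_{L^\infty}/\sqrt{\nu t}$ (from Cauchy--Schwarz applied to the advection term using the already-available integrated energy bound).
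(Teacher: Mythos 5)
Your proof is correct, and its skeleton coincides with the paper's: the localized enstrophy identity \eqref{locom} for the vorticity equation, the time-integrated dissipation bound obtained by \emph{not} discarding the second term in \eqref{Zest} (the paper's \eqref{enst1}, valid for each fixed $x_0$ after covering $B_{x_0}^{2R}$ by balls of radius $R$), the pointwise conversion $\omega^2 \le 2|\nabla u|^2$, the maximum principle combined with Cauchy--Schwarz in time and Proposition~\ref{Zprop} for the cubic advection term, and the supremum over $x_0$ taken only after all time integrations --- you correctly flag this ordering as the crux, and it is exactly the point the paper makes when it notes that \eqref{enst1} yields no pointwise-in-time uniformly local bound on $\nabla u$. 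Where you genuinely diverge is the device used to extract the factor $1/(\nu t)$: the paper uses a mean-value argument, selecting a good time slice $t_0 = t_0(x_0) \in [0,t/2]$ at which $\nu\|\nabla u(t_0)\|_{L^2(B_{x_0}^{2R})}^2 \le C R^2 \|u_0\|_{L^\infty}^2/t$ (its \eqref{enst2}), and then integrates the enstrophy identity over $[t_0,t]$, so that the term $R^2/(\nu t)$ comes from the initial slice $\|\omega(t_0)\|_{L^2(B_{x_0}^{2R})}^2$ via \eqref{enst3}; you instead multiply the differential inequality by the weight $s$ and integrate by parts, so the same term arises as $t^{-1}\int_0^t J_{x_0}(s)\dd s$ with $J_{x_0}(s) = \int \phi_{R,x_0}\,\omega(s)^2 \dd x$. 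The two devices are interchangeable here and produce the identical three contributions (your weighted Cauchy--Schwarz $\int_0^t s\,\|\omega(s)\|_{L^2(B_{x_0}^{2R})}\dd s \le (t^3/3)^{1/2}\bigl(\int_0^t \|\omega\|_{L^2(B_{x_0}^{2R})}^2 \dd s\bigr)^{1/2}$ reproduces, after division by $t$, exactly the paper's \eqref{enst5}). Your version is slightly more systematic --- no $x_0$-dependent time slice to track --- but it requires the differential form of the enstrophy identity and the absolute continuity of $s \mapsto J_{x_0}(s)$ on $[0,t]$, which is harmless (the solution is smooth for positive times and $J_{x_0}$ is bounded via the maximum principle, and the weight $s$ kills the boundary term at $s=0$) but should be said; the paper's mean-value trick works directly with the integrated identity \eqref{intom}. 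Finally, you are right that $\|u_0\|_{L^2}$ in \eqref{enstrophyest} is a typo for $\|u_0\|_{L^\infty}$: the paper's own estimates \eqref{enst3}--\eqref{enst5} all produce $\|u_0\|_{L^\infty}^2$, and an $L^2$ norm of $u_0$ need not even be finite in this setting.
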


\begin{proof}
Fix $t > 0$ and let $R$ be as in \eqref{Rdef}. If one does not 
neglect the second term in the right-hand side of \eqref{Zest}, 
the proof of Proposition~\ref{Zprop} shows that
\begin{equation}\label{enst1}
  Z_R(t)^2 + 2\nu \sup_{x \in \R^2} \int_0^t \|\nabla u(s)\|_{L^2(B_x^R)}^2
  \dd s \,\le\, C_6^2 R^2 \|u_0\|_{L^\infty}^2~. 
\end{equation}
Unfortunately, we cannot extract from \eqref{enst1} a pointwise
estimate in time on the uniformly local $L^2$ norm of $\nabla u$, 
because we cannot exchange the supremum and the integral in the
left-hand side of \eqref{enst1}. 

To avoid that difficulty, we use localized energy estimates for the
vorticity equation \eqref{omeq}. Let $x_0 \in \R^2$. As in 
Lemma~\ref{loclem} we have 
\begin{align}\nonumber
  \frac12 \frac{\D}{\D t}\int_{\R^2} \phi_{R,x_0} |\omega|^2 \dd x &+ 
  \nu\int_{\R^2} |\nabla(\phi_{R,x_0}^{1/2}\omega)|^2 \dd x \\ \label{locom}
  \,&=\, \nu\int_{\R^2} |\nabla \phi_{R,x_0}^{1/2}|^2 |\omega|^2 \dd x +
  \frac12 \int_{\R^2} \omega^2 (u\cdot \nabla\phi_{R,x_0})\dd x~,   
\end{align}
where $\phi_{R,x_0}$ is the localization function defined in 
\eqref{phiRdef}. In view of \eqref{enst1}, there exists a time 
$t_0 \in [0,t/2]$ (depending on $x_0$) such that
\begin{equation}\label{enst2}
  \nu\|\omega(t_0)\|_{L^2(B_{x_0}^{2R})}^2 \,\le 2\nu\|\nabla u(t_0)
  \|_{L^2(B_{x_0}^{2R})}^2 \,\le\, \frac{C R^2}{t}\,\|u_0\|_{L^\infty}^2~.
\end{equation}
Integrating \eqref{locom} over the time interval $[t_0,t]$, we
find
\begin{align}\nonumber
  \int_{\R^2} \phi_{R,x_0} |\omega(t)|^2 \dd x \,\le\,  
  &\int_{\R^2} \phi_{R,x_0} |\omega(t_0)|^2 \dd x 
  + 2\nu\int_{t_0}^t\!\int_{\R^2} |\nabla \phi_{R,x_0}^{1/2}|^2 
  |\omega(s)|^2 \dd x \dd s \\ \label{intom}
  &+ \int_{t_0}^t\!\int_{\R^2} \omega(s)^2 
  (u(s)\cdot \nabla\phi_{R,x_0})\dd x \dd s~.   
\end{align}
By \eqref{enst2} we have
\begin{equation}\label{enst3}
  \int_{\R^2} \phi_{R,x_0} |\omega(t_0)|^2 \dd x \,\le\, 
  \|\omega(t_0)\|_{L^2(B_{x_0}^{2R})}^2 \,\le\, \frac{C R^2}{\nu t}\, 
  \|u_0\|_{L^\infty}^2~.
\end{equation}
It follows also from \eqref{enst1} that
\begin{equation}\label{enst4}
  2\nu\int_{t_0}^t\!\int_{\R^2} |\nabla \phi_{R,x_0}^{1/2}|^2 
  |\omega(s)|^2 \dd x \dd s \,\le\, \frac{C\nu}{R^2}
  \int_0^t \|\omega(s)\|_{L^2(B_{x_0}^{2R})}^2 \dd s 
  \,\le\, C \|u_0\|_{L^\infty}^2~.
\end{equation}
Finally, the cubic term in \eqref{intom} can be estimated 
as follows\:
\begin{align}\nonumber
  \Bigl|\int_{t_0}^t\!\int_{\R^2} \omega(s)^2 &(u(s)\cdot \nabla\phi_{R,x_0})
  \dd x \dd s\Bigr| \\ \nonumber 
  \,&\le\, \frac{C}{R} \int_0^t \|\omega(s)\|_{L^\infty}
  \|\omega(s)\|_{L^2(B_{x_0}^{2R})} \|u(s)\|_{L^2(B_{x_0}^{2R})}\dd s \\ \label{enst5}
  \,&\le\, \frac{C}{R}\,\|\omega_0\|_{L^\infty} \Bigl(\int_0^t 
  \|\omega(s)\|_{L^2(B_{x_0}^{2R})}^2\dd s\Bigr)^{1/2} t^{1/2} 
  \sup_{0 \le s \le t} \|u(s)\|_{L^2(B_{x_0}^{2R})} \\ \nonumber
  \,&\le\, \frac{C}{R}\,\|\omega_0\|_{L^\infty} \cdot \frac{CR
  \|u_0\|_{L^\infty}}{\nu^{1/2}}\cdot t^{1/2}\cdot CR\|u_0\|_{L^\infty} 
  \,\le\, \frac{CRt}{\sqrt{\nu t}}\,\|\omega_0\|_{L^\infty} 
  \|u_0\|_{L^\infty}^2~.
\end{align}
If we now insert \eqref{enst3}--\eqref{enst5} into \eqref{intom} 
we obtain for all $x_0 \in \R^2$\:
\[
  \int_{\R^2} \phi_{R,x_0} |\omega(t)|^2 \dd x \,\le\, C 
  \|u_0\|_{L^2}^2 \Bigl(1 + \frac{R^2}{\nu t} + \frac{Rt}{\sqrt{\nu t}}
  \,\|\omega_0\|_{L^\infty}\Bigr)~.   
\]
Taking the supremum over $x_0\in \R^2$, we arrive at 
\eqref{enstrophyest}. 
\end{proof}

\begin{remark}\label{remfin}
Estimate \eqref{ulom} easily follows from \eqref{enstrophyest}. 
Indeed, in view of \eqref{smoothT}, it is clearly sufficient to 
prove \eqref{ulom} for $t \ge T$. In that case we have 
$\sqrt{\nu t} \le K_1\|u_0\|_{L^\infty}t$, hence 
\[
  \frac{Rt}{\sqrt{\nu t}}\,\|\omega_0\|_{L^\infty} \,=\, 
  \frac{R}{\nu}\,\|\omega_0\|_{L^\infty}\sqrt{\nu t} \,\le\, 
  \frac{K_1R}{\nu t}\,\|u_0\|_{L^\infty}\|\omega_0\|_{L^\infty}t^2 
  \,\le\, \frac{K_1}{C_7}\frac{R^2}{\nu t}~,
\]
where in the last inequality we used definition \eqref{Rdef}, 
which also implies that $R^2 \ge C_7^2 \nu t$. Thus 
\eqref{ulom} follows from \eqref{enstrophyest} when $t \ge T$. 
\end{remark}

\section{Appendix}
\label{s4}

\subsection{The Biot-Savart law for bounded velocities and 
vorticities}
\label{A1}

Assume that $u \in X$ satisfies $\div u = 0$, and let $\omega = 
\curl u = \partial_1 u_2 - \partial_2 u_1$.  If $\omega$ is strongly
localized, for instance if $\omega \in L^p(\R^2)$ for some $p \in
(1,2)$, then $u$ can be reconstructed from $\omega$, up to an additive
constant $u_\infty$, by the classical Biot-Savart formula
(see Section~\ref{A3})\:
\begin{equation}\label{BS1}
  u(x) \,=\, u_\infty + \frac{1}{2\pi}\int_{\R^2} 
  \frac{(x-y)^\perp}{|x-y|^2}\,\omega(y)\dd y~, \qquad 
  x \in \R^2~.
\end{equation}
Moreover, the Hardy-Littlewood-Sobolev inequality \cite{LL} implies
that $u - u_\infty \in L^q(\R^2)^2$ for $q = 2p/(2-p)$, hence $u(x)$
converges in some sense to $u_\infty$ as $|x| \to \infty$.

If the vorticity distribution $\omega$ is only weakly localized, 
the integral in \eqref{BS1} does not converge any more, and 
the Biot-Savart formula has to be modified. Here is a 
reasonable possibility\:

\begin{proposition}\label{BSweak}
If $\omega \in L^p(\R^2)$ for some $p \in (2,\infty)$, the 
velocity field $u \in X$ satisfies 
\begin{equation}\label{BS2}
  u(x) \,=\, u(0) + \frac{1}{2\pi}\int_{\R^2} 
  \biggl\{\frac{(x-y)^\perp}{|x-y|^2} + \frac{y^\perp}{|y|^2}
  \biggr\}\,\omega(y)\dd y~, \qquad x \in \R^2~.
\end{equation}
\end{proposition}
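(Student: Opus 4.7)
The plan is to interpret the bracketed kernel in \eqref{BS2} as $2\pi\bigl[K(x-y) - K(-y)\bigr]$, where $K(z) = \frac{1}{2\pi}\frac{z^\perp}{|z|^2}$ is the standard Biot--Savart kernel, and then to identify the right-hand side with $u(x)$ by a three-step scheme: first prove that the integral converges absolutely and defines a continuous vector field $v(x)$ with strictly sublinear growth; next check that $v$ is divergence-free and has curl equal to $\omega$; finally use a Liouville-type rigidity argument to show that $u - u(0) - v$ vanishes identically.

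For the convergence, I would split the $y$-integration at $|y| = 2|x|$. On the inner region $\{|y| \le 2|x|\}$, estimate the two kernel pieces separately by $C/|x-y|$ and $C/|y|$; since $|z|^{-1} \in L^{p'}_\loc(\R^2)$ when $p' < 2$, H\"older's inequality applies precisely because $p > 2$. On the outer region $\{|y| > 2|x|\}$, a one-parameter mean-value argument along the segment $\{tx-y : t \in [0,1]\}$, using the elementary bound $|tx-y| \ge |y|/2$ throughout, gives $|K(x-y) - K(-y)| \le C|x|/|y|^2$; then H\"older against $\omega \in L^p$ converges thanks to $p < \infty$, which ensures $|y|^{-2} \in L^{p'}$ at infinity. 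Adding both contributions yields $|v(x)| \le C_p\,\|\omega\|_{L^p}\,|x|^{1-2/p}$.

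To establish $\div v = 0$ and $\curl v = \omega$, I would approximate: let $\chi_n \in C_c^\infty(\R^2)$ equal $1$ on $|y| \le n/2$ and vanish for $|y| \ge n$, and set $\omega_n = \chi_n \omega$. Each $\omega_n$ is compactly supported and belongs to $L^{p_0}(\R^2)$ for every $p_0 \in [1,p]$, in particular for some $p_0 \in (1,2)$, so the classical formula \eqref{BS1} produces a vector field $u_n = K * \omega_n$ with $\div u_n = 0$ and $\curl u_n = \omega_n$ in the sense of distributions. The subtracted functions
\[
v_n(x) \,=\, u_n(x) - u_n(0) \,=\, \int_{\R^2}\bigl[K(x-y) - K(-y)\bigr]\omega_n(y)\dd y
\]
inherit $\div v_n = 0$ and $\curl v_n = \omega_n$ because constants have vanishing derivatives. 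Applying the estimates of the previous paragraph to $\omega - \omega_n$ dominates the tail contribution and gives $v_n \to v$ pointwise and in $L^1_\loc(\R^2)$, allowing the distributional identities to pass to the limit.

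With the Biot--Savart identities in hand, the continuous vector field $w := u - u(0) - v$ satisfies $\div w = 0$, $\curl w = 0$, and $w(0) = 0$ (the latter by direct inspection of the integrand at $x=0$). Since $\Delta w = \nabla\div w + \nabla^\perp\curl w = 0$ distributionally, Weyl's lemma makes each component of $w$ harmonic on $\R^2$; and the bound $|w(x)| \le 2\|u\|_{L^\infty} + C_p\|\omega\|_{L^p}|x|^{1-2/p}$ is strictly sublinear, which via the standard gradient estimate $|\nabla h(x)| \le C R^{-1} \sup_{\partial B_x^R}|h|$ for harmonic functions (letting $R \to \infty$) forces $\nabla w \equiv 0$. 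Combined with $w(0) = 0$ this gives $w \equiv 0$, which is \eqref{BS2}. The main obstacle is the convergence analysis of the subtracted kernel, which needs a single uniform bound that is effective both at the near-diagonal singularity of $K$ (where $p > 2$ is indispensable) and at spatial infinity (where $p < \infty$ is indispensable); this is the only place where both endpoints of the range $p \in (2,\infty)$ play a nontrivial role.
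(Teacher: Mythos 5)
Your proposal is correct and follows essentially the same route as the paper's proof: the same splitting of the subtracted kernel at $|y| = 2|x|$ (your mean-value bound $|K(x-y)-K(-y)| \le C|x|/|y|^2$ on the outer region is equivalent to the paper's algebraic estimate $|F(x,y)| \le 3|x|\,|x-y|^{-1}|y|^{-1}$), the same growth bound $|v(x)| \le C\|\omega\|_{L^p}|x|^{1-2/p}$, and the same Liouville argument for the harmonic remainder $w$ with the normalization $v(0)=0$. Your truncation argument with $\omega_n = \chi_n\omega$ is a legitimate and careful way of filling in what the paper dismisses as ``a standard calculation in distribution theory,'' correctly using $p<\infty$ to make the $L^p$ tail of $\omega$ vanish.
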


\begin{proof}
Let $q = p/(p-1)$, so that $q \in (1,2)$ and $\frac1p + \frac1q = 1$. 
For all $x,y \in \R^2$ with $x \neq y$ and $y \neq 0$, we denote
\begin{equation}\label{Fxydef}
  F(x,y) \,=\, \frac{(x-y)^\perp}{|x-y|^2} + \frac{y^\perp}{|y|^2} \,=\, 
  \frac{x^\perp(y\cdot(y{-}x)) + y^\perp(x\cdot(x{-}y)) + (x^\perp{-}y^\perp)
  (x\cdot y)}{|x-y|^2 |y|^2}~.
\end{equation}
We claim that, for any $x \in \R^2$, the map $y \mapsto F(x,y)$ belongs 
to $L^q(\R^2)$ and
\begin{equation}\label{Fqbound}
  \biggl(\int_{\R^2} |F(x,y)|^q \dd y\biggr)^{1/q} \,\le\, C 
  |x|^{\frac2q - 1}~,  
\end{equation}
where $C > 0$ is a universal constant. Indeed, as $|F(x,y)| \le 
|x-y|^{-1} + |y|^{-1}$, we obtain using Minkowski's inequality
\[
  \biggl(\int_{|y|\le 2|x|} |F(x,y)|^q \dd y\biggl)^{1/q} \,\le\, 
  2 \biggl(\int_{|y|\le 3|x|} \frac{1}{|y|^q}\dd y\biggl)^{1/q} 
  \,\le\, C |x|^{\frac2q - 1}~.  
\]
On the other hand, since $|F(x,y)| \le 3|x| |x-y|^{-1} |y|^{-1}$
in view of the last expression in \eqref{Fxydef}, we have 
$|F(x,y)| \le 6|x||y|^{-2}$ when $|y| \ge 2|x|$, hence
\[
  \biggl(\int_{|y| \ge 2|x|} |F(x,y)|^q \dd y\biggl)^{1/q} \,\le\, 
  6 |x|\biggl(\int_{|y|\ge 2|x|} \frac{1}{|y|^{2q}}\dd y\biggl)^{1/q} 
  \,\le\, C |x|^{\frac2q - 1}~.  
\]
This proves \eqref{Fqbound}. Now, let
\[
  v(x) \,=\, \frac{1}{2\pi}\int_{\R^2} F(x,y)\omega(y)\dd y \,=\, 
  \frac{1}{2\pi}\int_{\R^2} \biggl\{\frac{(x-y)^\perp}{|x-y|^2} + 
  \frac{y^\perp}{|y|^2}\biggr\}\,\omega(y)\dd y~, \qquad x \in \R^2~.
\]
Since $\frac2q - 1 = 1 - \frac2p$, we deduce from \eqref{Fqbound} that
$|v(x)| \le C|x|^{1-\frac2p}\|\omega\|_{L^p}$ for all $x \in \R^2$.
Moreover, a standard calculation in distribution theory shows that
$\div v = 0$ and $\curl v = \omega$. If $w = u - v$, we thus have
$\div w = 0$ and $\curl w = 0$, so that $w$ is a harmonic vector field
on $\R^2$. As $w(x)$ has a sublinear growth as $|x| \to \infty$, we
conclude that $w$ is identically constant, and since $v(0) = 0$ by
definition we must have $w = u(0)$, which proves \eqref{BS2}.
\end{proof}

In these notes, we mainly deal with the situation where the 
vorticity $\omega$ is not localized at all, namely $\omega \in
L^\infty(\R^2)$. In that case, the integral in \eqref{BS2} is
logarithmically divergent, and has to be interpreted in an 
appropriate way. The main result of this section is\:

\begin{proposition}\label{BSultim}
Assume that $u \in X$, $\div u = 0$, and $\omega = \curl u \in 
L^\infty(\R^2)$. Then
\begin{equation}\label{BS3}
  u(x) \,=\, u(0) + \lim_{R \to \infty}\frac{1}{2\pi}\int_{|y|\le R} 
  \biggl\{\frac{(x-y)^\perp}{|x-y|^2} + \frac{y^\perp}{|y|^2}
  \biggr\}\,\omega(y)\dd y~, \qquad x \in \R^2~.
\end{equation}
\end{proposition}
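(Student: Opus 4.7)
The plan is to prove this in two stages: (i) show that the limit exists, and (ii) identify it with $u(x) - u(0)$. For Stage (i), fix $x\ne 0$ and split the domain of integration as $\{|y|\le 2|x|\}\cup\{2|x|\le|y|\le R\}$. The inner piece is absolutely convergent and independent of $R$ once $R\ge 2|x|$, since $F(x,y)$ has only integrable $1/|z|$-type singularities at $y=0$ and $y=x$. For the outer piece, the crucial observation (analogous to the proof of Proposition~\ref{BSweak}) is that the cancellation in $F(x,y)=K(x-y)-K(-y)$, combined with a Taylor expansion, yields $|\nabla_y F(x,y)| = O(|x|/|y|^3)$ uniformly for $|y|\ge 2|x|$. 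This decay is integrable at infinity in $\R^2$, so using $\omega=\partial_1 u_2-\partial_2 u_1$ and integration by parts,
\[
 \int_{2|x|\le|y|\le R} F(x,y)\omega(y)\,\dd y \,=\, -\int_{2|x|\le|y|\le R} \bigl(\nabla_y^\perp F(x,y)\bigr)\cdot u(y)\,\dd y + B_R - B_{2|x|}~,
\]
the bulk integral is now absolutely convergent as $R\to\infty$, and the outer boundary contribution is $B_R = O(|x|\,\|u\|_{L^\infty}/R) \to 0$. This establishes existence of the limit; define $w(x) := u(0) + \tfrac{1}{2\pi}\lim_R \int_{|y|\le R} F(x,y)\omega(y)\,\dd y$.

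For Stage (ii), the elementary identity
\[
 \int_{|y|\le R}F(x,y)\omega(y)\,\dd y \,=\, 2\pi\bigl[\tilde u_R(x) - \tilde u_R(0)\bigr]~, \qquad \tilde u_R(x) \,:=\, \frac{1}{2\pi}\int_{B_R}K(x-y)\omega(y)\,\dd y~,
\]
combined with the fact that $\omega\mathbf{1}_{B_R}\in L^\infty\cap L^p(\R^2)$ for every $p\in(2,\infty)$ (since it has compact support), allows Proposition~\ref{BSweak} to be applied to the adjusted field $u_R:=\tilde u_R-\tilde u_R(0)+u(0)\in X$, which is divergence-free with curl $\omega\mathbf{1}_{B_R}$ and satisfies $u_R(0)=u(0)$. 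This yields $u_R(x) = u(0) + \tfrac{1}{2\pi}\int_{|y|\le R}F(x,y)\omega(y)\,\dd y$, so $u_R(x)\to w(x)$ pointwise as $R\to\infty$. It remains to prove $u_R(x) \to u(x)$, hence $w = u$.

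Now $u_R - u$ is divergence-free and curl-free on $B_R$, hence componentwise harmonic there, with $(u_R - u)(0) = 0$. The Poisson formula gives
\[
 (u_R - u)(x) \,=\, \int_{\partial B_R} \frac{x\cdot(y-x)}{\pi R\,|y-x|^2}\,(u_R-u)(y)\,dl(y)~, \qquad |x| < R~,
\]
where the kernel is $O(|x|/R^2)$ uniformly for $|x|\le R/2$, so $|(u_R - u)(x)|\le (C|x|/R)\,\|u_R - u\|_{L^\infty(\partial B_R)}$. The main obstacle, and the step that I expect to require the most care, is obtaining a uniform-in-$R$ bound $\|\tilde u_R\|_{L^\infty(\partial B_R)}\le C\|u\|_{L^\infty}$: the naive estimate $\|\tilde u_R\|_{L^\infty(\partial B_R)} = O(R\,\|\omega\|_{L^\infty})$ is insufficient, as it merely gives $|(u_R-u)(x)| = O(|x|\,\|\omega\|_{L^\infty})$ in the limit. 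I plan to achieve the stronger bound by exploiting the structure $\omega = \curl u$: integrating by parts each multipole moment $\int_{B_R}y^{\otimes n}\omega\,\dd y$ via Stokes' theorem replaces the naive factor $\|\omega\|_{L^\infty}R^{n+2}$ by $\|u\|_{L^\infty}R^{n+1}$, so each multipole contribution to $\tilde u_R$ on $\partial B_R$ is of size $O(\|u\|_{L^\infty})$ rather than $O(R\,\|\omega\|_{L^\infty})$. Summing (with due care about convergence at the boundary of the support) gives $\|\tilde u_R\|_{L^\infty(\partial B_R)}\le C\|u\|_{L^\infty}$, and hence $|(u_R-u)(x)|\le C|x|\,\|u\|_{L^\infty}/R \to 0$ as $R\to\infty$, yielding $w = u$ and the formula \eqref{BS3}.
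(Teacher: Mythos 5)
Your Stage (i) is sound and is essentially the paper's own existence argument: split the integral into a fixed neighborhood of the two singularities and a far region, rewrite $\omega = -\div u^\perp$ and integrate by parts there, observe that the outer boundary term is $O(|x|\,\|u\|_{L^\infty}/R)$ and that the bulk is absolutely convergent because $|\nabla_y F(x,y)| = O(|x|/|y|^3)$ for $|y|\ge 2|x|$ (this is the paper's estimate \eqref{nablaFest} restricted to that region). The setup of Stage (ii) is also legitimate: $\tilde u_R$ is bounded and (log-)Lipschitz, so $u_R \in X$ with $\curl u_R = \omega\1_{B_0^R}$ and Proposition~\ref{BSweak} applies; $u_R - u$ is divergence-free and curl-free, hence componentwise harmonic in $B_0^R$; and your difference-of-Poisson-kernel identity is correct, with $\bigl|\frac{x\cdot(y-x)}{\pi R |y-x|^2}\bigr| \le C|x|/R^2$ for $|x|\le R/2$. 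This identification step is genuinely different from the paper's, which never estimates anything on $\partial B_0^R$: the paper shows instead that the limit $\bar u$ is continuous with at most logarithmic growth, satisfies $\div \bar u = 0$, $\curl \bar u = \omega$, $\bar u(0) = 0$, and concludes by a Liouville argument that $u - u(0) - \bar u$, being harmonic with sublinear growth, vanishes.

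The gap is exactly at the step you flagged: the claimed uniform bound $\|\tilde u_R\|_{L^\infty(\partial B_0^R)} \le C\|u\|_{L^\infty}$ is not delivered by the multipole sketch. The expansion of $K(x-y)$ in powers of $y$ converges only for $|y| < |x|$, while your integration runs up to $|y| = R = |x|$; and even formally, after the Stokes improvement each multipole moment contributes $O(\|u\|_{L^\infty})$ to $\tilde u_R$ on $\partial B_0^R$ with \emph{no decay in the multipole order}, so the series cannot be summed --- this reflects a real logarithmic obstruction. What one can actually prove, by keeping the $K\omega$ form on $B_x^1 \cap B_0^R$ and integrating by parts outside it (the new boundary piece $\int_{\partial B_0^R,\,|y-x|\ge 1} |x-y|^{-1}\dd\ell_y$ and the bulk $\int_{1\le |x-y| \le 2R} |x-y|^{-2}\dd y$ each cost a logarithm), is
\begin{equation*}
  \|\tilde u_R\|_{L^\infty(\partial B_0^R)} \,\le\, C\Bigl(\|\omega\|_{L^\infty}
  + \|u\|_{L^\infty}\log R\Bigr)~,
\end{equation*}
and whether the uniform bound you assert is even true is unclear. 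Fortunately your own Poisson step does not need it: the kernel $\frac{x\cdot(y-x)}{\pi R|y-x|^2}$ integrates to zero over $\partial B_0^R$ (constants are harmonic), so the possibly large constant $\tilde u_R(0)$ drops out of the representation, and since $\int_{\partial B_0^R}\frac{x\cdot(y-x)}{\pi R|y-x|^2}$ has total mass $O(|x|/R)$ for $|x|\le R/2$, the logarithmic bound already yields $|(u_R-u)(x)| \le C|x|\,(\log R)/R \to 0$. With that single replacement --- the provable $\log R$ bound instead of the unsupported uniform one --- your argument closes and gives \eqref{BS3}; as written, however, the crucial estimate is unproven and the proposed mechanism for it fails.
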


\begin{proof}
For any $R > 0$ we denote
\[
  u_R(x) \,=\, \frac{1}{2\pi}\int_{|y|\le R} 
  \biggl\{\frac{(x-y)^\perp}{|x-y|^2} + \frac{y^\perp}{|y|^2}
  \biggr\}\,\omega(y)\dd y~, \qquad x \in \R^2~.
\]
Clearly $\div u_R = 0$ and $\curl u_R = \omega\,\1_{\{|x|\le R\}}$ in
the sense of distributions on $\R^2$. Moreover $u_R$ is H\"older 
continuous and satisfies $u_R(0) = 0$. For $R > 2|x| + 3$, we 
decompose $u_R(x) = v(x) + w_R(x)$, where
\begin{equation}\label{vwdef}
  v(x) \,=\, \frac{1}{2\pi}\int_{D_x} F(x,y)\omega(y)\dd y~, 
  \qquad w_R(x) \,=\, \frac{1}{2\pi}\int_{B_0^R \setminus D_x} F(x,y)
  \omega(y)\dd y~. 
\end{equation}
Here the following notations have been used. As in Section~\ref{s3}, 
we denote by $B_x^r$ the closed ball of radius $r \ge 0$ centered at 
$x \in \R^2$. For any $x \in \R^2$, we define
\[
  D_x \,=\, \begin{cases} B_0^3 & \hbox{if} \quad|x| \le 2\,, \\
  B_0^1 \cup B_x^1 & \hbox{if} \quad |x| > 2\,, \end{cases}
\] 
so that $D_x \subset B_0^R$ and $D_x$ has smooth boundary. Finally 
$F(x,y)$ is as in \eqref{Fxydef}. 

We now estimate both terms in \eqref{vwdef}. As $|F(x,y)| \le 
|x-y|^{-1} + |y|^{-1}$, we have
\[
  |v(x)| \,\le\, \frac{1}{\pi}\int_{|y|\le 5} \frac{1}{|y|}|\omega(y)|
  \dd y \,\le\, 10\|\omega\|_{L^\infty}~, \qquad \hbox{if } |x| \le 2~,
\]
and
\[
  |v(x)| \,\le\, \frac{2}{\pi}\int_{|y|\le 1} \frac{1}{|y|}|\omega(y)|
  \dd y \,\le\, 4\|\omega\|_{L^\infty}~, \qquad \hbox{if } |x| > 2~.
\]
Moreover $v$ is continuous and $v(0) = 0$. To bound $w_R$, we use the
fact that $\omega = \curl u = -\div u^\perp$, and we integrate by
parts using Green's formula. We obtain
\begin{align*}
  w_R(x) \,&=\, \frac{1}{2\pi}\int_{\partial D_x}\,F(x,y)u^\perp(y)
  \cdot\nu(y) \dd\ell_y -\frac{1}{2\pi}\int_{\partial B_0^R}\,F(x,y)u^\perp(y)
  \cdot\nu(y) \dd\ell_y \\ &\quad+\, 
  \frac{1}{2\pi}\int_{B_0^R \setminus D_x}u^\perp(y)\cdot\nabla_y
  F(x,y)\dd y \,=\, w^{(1)}(x) -  w_R^{(2)}(x) +  w_R^{(3)}(x)~,
\end{align*}
where on the circles $\partial D_x$ or $\partial B_0^R$ we denote
by $\nu$ the exterior unit normal and $\D\ell$ the elementary 
arc length. Proceeding as in the proof of Proposition~\ref{BSweak}, 
it is straightforward to verify that $| w^{(1)}(x)| \le 4\|u\|_{L^\infty}$ 
and $|w_R^{(2)}(x)| \le 6|x|\|u\|_{L^\infty}/R$. In particular $w_R^{(2)}(x)$ 
converges to zero as $R \to \infty$, for any $x \in \R^2$. Finally, 
using the estimate
\begin{equation}\label{nablaFest}
  |\nabla_y F(x,y)| \,\le\, C\biggl(\frac{|x|}{|x-y|^2 |y|} 
  + \frac{|x|}{|x-y|\,|y|^2}\biggl)~, \qquad x \neq y~, \quad 
  y \neq 0~,
\end{equation}
which can be obtained by a direct calculation, it is not difficult 
to show that
\begin{equation}\label{nablaFint}
  \frac{1}{2\pi}\int_{\R^2 \setminus D_x} |u(y)| |\nabla_y F(x,y)|\dd y
  \,\le\, C \|u\|_{L^\infty}\log(1+|x|)~,
\end{equation}
for some universal constant $C > 0$. When evaluating that integral for
large $|x|$, it is convenient to consider separately the regions where
$1 \le |y| \le |x|/2$, where $1 \le |y-x| \le |x|/2$, where $|y| \ge
2|x|$, and the region where $|x|/2 \le |y| \le 2|x|$ with 
$|y-x| \ge |x|/2$. Estimate \eqref{nablaFint} implies in particular 
that $w_R^{(3)}(x)$ has a limit as $R \to \infty$, so that 
$w_R(x) \to w_\infty(x)$ for some continuous vector field $w_\infty$.   

Summarizing, we have shown that $u_R(x)$ converges as $R \to \infty$
to some continuous vector field $\bar u(x) = v(x) + w_\infty(x)$ which
satisfies
\[
  |\bar u(x)| \,\le\, C\Bigl(\|\omega\|_{L^\infty} + \|u\|_{L^\infty}\Bigr)
  \log(2+|x|)~, \qquad x \in \R^2~.
\]
By construction, we have $\div \bar u = 0$, $\curl \bar u = \omega$,
and $\bar u(0) = 0$. As in the proof of Proposition~\ref{BSweak}, we
conclude that $u - \bar u$ is identically constant, and this gives
\eqref{BS3}.
 \end{proof}
 
Proposition~\ref{BSultim} shows that the divergence free velocity
field $u \in X$ is entirely determined, up to an additive constant, by
its vorticity distribution $\omega$ even in the case where $\omega$ is
merely bounded. However this result does not provide a good 
reconstruction formula, because the integral in \eqref{BS3} is not 
absolutely convergent. In particular, we cannot use \eqref{BS3} to derive 
an estimate on $\|u\|_{L^\infty}$ in terms of $\|\omega\|_{L^\infty}$, 
but the following consequence of \eqref{BS3} will be useful\:

\begin{corollary}\label{BScor}
Under the assumptions of Proposition~\ref{BSultim}, we have 
for all $x \in \R^2$\:
\begin{equation}\label{BS4}
  u(x) + u(-x) - 2u(0) \,=\,  \frac{1}{2\pi}\int_{\R^2} 
  \biggl\{\frac{(x-y)^\perp}{|x-y|^2} - 
  \frac{(x+y)^\perp}{|x+y|^2} + 2
  \frac{y^\perp}{|y|^2}\biggr\}\omega(y)\dd z~.
\end{equation}
In particular, the following estimate holds
\begin{equation}\label{BS5}
  |u(x) + u(-x) - 2u(0)| \,\le\,  C_* |x| \,\|\omega\|_{L^\infty}~,
  \qquad x \in \R^2~,
\end{equation}
where $C_* > 0$ is a universal constant. 
\end{corollary}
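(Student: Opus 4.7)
My plan is to reduce \eqref{BS4} and \eqref{BS5} to a single analytic statement about the symmetrized Biot--Savart kernel
\[
  K(x,y) \,=\, \frac{(x-y)^\perp}{|x-y|^2} - \frac{(x+y)^\perp}{|x+y|^2} + 2\,\frac{y^\perp}{|y|^2}~,
\]
namely that $\|K(x,\cdot)\|_{L^1(\R^2)} \le C|x|$ for some universal constant $C>0$. Once this is established, the corollary follows immediately: apply \eqref{BS3} once with $x$ and once with $-x$, add the two identities, and observe that the combined integrand over $\{|y|\le R\}$ is precisely $K(x,y)\omega(y)/(2\pi)$. Because $K(x,\cdot)\omega \in L^1(\R^2)$, the limit as $R\to\infty$ can be absorbed into the integral, yielding \eqref{BS4}; then \eqref{BS5} is obtained by bounding $|\omega(y)| \le \|\omega\|_{L^\infty}$ and using the $L^1$ bound on $K(x,\cdot)$.

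The core of the argument is therefore the estimate $\|K(x,\cdot)\|_{L^1} \le C|x|$. Observe that if we set $f(z) = (z-y)^\perp/|z-y|^2$ for fixed $y\neq 0$, then
\[
  K(x,y) \,=\, f(x) + f(-x) - 2f(0)~.
\]
I would split the integration over $y$ into four regions governed by the three potential singularities of $K(x,\cdot)$ at $y=0$, $y=x$, $y=-x$, together with $r:=|x|$. In the region $\{|y|\le r/2\}$, both $|x\pm y|\ge r/2$, so the first two terms of $K$ are bounded by $2/r$, while the last is $2/|y|$; integration gives an $O(r)$ contribution. The symmetric regions $\{|y-x|\le r/2\}$ and $\{|y+x|\le r/2\}$ each contain exactly one singularity of integrable type $1/|y\mp x|$, with the remaining terms being bounded by $O(1/r)$ over a ball of area $\pi r^2/4$; each yields $O(r)$.

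The remaining region $\{|y|\ge 2r\}$ is where cancellation must be exploited. Here I would use the Taylor-type identity
\[
  f(x) + f(-x) - 2f(0) \,=\, \int_0^1 (1-s)\Bigl[D^2 f(sx)(x,x) + D^2 f(-sx)(x,x)\Bigr]\dd s~,
\]
together with $|D^2 f(z)| \le C/|z-y|^3$. Since $|sx - y| \ge |y| - r \ge |y|/2$ for $|s|\le 1$ and $|y|\ge 2r$, this yields $|K(x,y)| \le C|x|^2/|y|^3$, which integrates over $\{|y|\ge 2r\}$ to $O(r)$. Adding the four contributions produces $\|K(x,\cdot)\|_{L^1}\le C|x|$.

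The main obstacle is precisely the bookkeeping in the far-field region: one must verify that the Taylor-based pointwise bound $|K(x,y)| \le C|x|^2/|y|^3$ holds uniformly in the region $|y|\ge 2r$ without the second-derivative singularity of $f$ at $y=sx$ causing trouble. The key observation making this clean is that the line segment $\{sx : |s|\le 1\}$ is separated from $y$ by distance at least $|y|/2$ in that region, so the singularity never enters the integral in the Taylor remainder. Once the $L^1$ bound on $K(x,\cdot)$ is in hand, the rest of the proof is routine: apply Fubini/dominated convergence to pass the limit in \eqref{BS3} inside, and both \eqref{BS4} and \eqml{BS5} follow at once.
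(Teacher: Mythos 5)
Your overall route is the same as the paper's: apply \eqref{BS3} at $x$ and at $-x$, add, identify the combined integrand with the symmetrized kernel $G(x,y)$, and reduce everything to the bound $\int_{\R^2}|G(x,y)|\dd y \le C|x|$, after which \eqref{BS4} follows by dominated convergence and \eqref{BS5} by bounding $|\omega|$ by its sup. Where you differ is in how the $L^1$ bound is proved. The paper obtains, by a direct algebraic calculation, the single pointwise estimate $|G(x,y)| \le C\,|x|^2/\bigl(|x-y|\,|x+y|\,|y|\bigr)$, and then integrates it over the regions $|y|\le |x|/2$, $|x|/2 \le |y| \le 2|x|$, $|y|\ge 2|x|$; this one inequality handles the near-singularity behavior automatically (e.g.\ near $y=x$ it reduces to $C/|x-y|$, which is locally integrable). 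You instead use crude bounds near the three singularities and, in the far field, the second-order Taylor identity $f(x)+f(-x)-2f(0)=\int_0^1(1-s)\bigl[D^2f(sx)(x,x)+D^2f(-sx)(x,x)\bigr]\dd s$ with $|D^2f(z)|\le C|z-y|^{-3}$, giving $|G(x,y)|\le C|x|^2/|y|^3$ for $|y|\ge 2|x|$. Your Taylor argument is more systematic and correctly isolates the cancellation of both the zeroth- and first-order terms (which is where the factor $|x|^2$ comes from), and your observation that the segment $\{sx: |s|\le 1\}$ stays at distance $\ge |y|/2$ from $y$ is exactly the point that makes the remainder bound legitimate; the paper's pointwise bound is sharper and more compact but requires the explicit computation.

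One slip in your bookkeeping: with $r=|x|$, the four regions $\{|y|\le r/2\}$, $\{|y-x|\le r/2\}$, $\{|y+x|\le r/2\}$, and $\{|y|\ge 2r\}$ do \emph{not} cover $\R^2$; the intermediate set $\{r/2 < |y| < 2r\}\setminus\bigl(B_x^{r/2}\cup B_{-x}^{r/2}\bigr)$ is missed. This is harmless: on that set all three denominators $|y|$, $|x-y|$, $|x+y|$ are bounded below by $r/2$, so $|G(x,y)|\le C/r$ pointwise, and the region has area at most $4\pi r^2$, contributing another $O(r)$. With that one additional line your proof of the $L^1$ bound, and hence of the corollary, is complete.
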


\begin{proof}
Using \eqref{BS3} we easily obtain
\begin{equation}\label{Gaux}
  u(x) + u(-x) - 2u(0) \,=\,  \lim_{R \to \infty}\frac{1}{2\pi}
  \int_{|y|\le R} G(x,y) \omega(y)\dd y~, 
\end{equation}
where
\[
  G(x,y) \,=\, \frac{(x-y)^\perp}{|x-y|^2} - 
  \frac{(x+y)^\perp}{|x+y|^2} + 2 \frac{y^\perp}{|y|^2}~,
  \qquad x \neq \pm y~, \quad y \neq 0~.
\]
A direct calculation yields the bound
\[
  |G(x,y)| \,\le\, C\,\frac{|x|^2}{|x-y|\, |x+y|\, |y|}~,
\]
which implies that
\begin{equation}\label{Gint}
  \int_{\R^2}  |G(x,y)| \dd y \,\le\, C |x|~, \qquad x \in \R^2~,
\end{equation}
for some universal constant $C > 0$. When evaluating that integral, it
is convenient to consider separately the regions where $|y| \le
|x|/2$, where $|x|/2 \le |y| \le 2|x|$, and where $|y| \ge 2|x|$. Thus
we have shown that the integral in \eqref{Gaux} is absolutely
convergent, so that \eqref{BS4} holds, and \eqref{BS5} follows
from \eqref{Gint}. 
\end{proof}

\begin{remark}\label{translation}
The origin plays a distinguished role in formulas \eqref{BS3} and 
\eqref{BS4}, but this is by no mean essential, and more general 
expressions can easily be obtained using translation invariance. 
\end{remark}

\subsection{A representation formula for the pressure}
\label{A2}

Assume that $u \in X$ is such that $\div u = 0$ and $\omega = 
\partial_1 u_2 - \partial_2 u_1 \in L^\infty(\R^2)$. As was discussed
in Section~\ref{ss2.2}, the elliptic equation \eqref{peq} determines a
unique pressure field $p \in \BMO(\R^2)$, up to an irrelevant additive
constant. Setting $q = p + \half |u|^2$ and using identity
\eqref{nlinid}, we obtain for $q$ the equation
\begin{equation}\label{qeq}
  -\Delta q \,=\, \div(u^\perp\omega)~, \qquad x \in \R^2~.
\end{equation}
The goal of this section is to obtain a representation formula
for the solution of \eqref{qeq} involving absolutely convergent 
integrals only, and not singular integrals as in \eqref{pdef}. 

\begin{lemma}\label{qrepresentation}
Assume that $u \in X$, $\div u = 0$ and $\omega = \partial_1 u_2 
- \partial_2 u_1 \in L^\infty(\R^2)$. If $q \in \BMO(\R^2)$ 
is a solution to \eqref{qeq}, we have for any $x_0 \in \R^2$
\begin{equation}\label{qrep}
  q(x) \,=\, q_0 + q_1(x) + q_2(x) + q_3(x,x_0)~, \qquad x 
  \in \R^2~,
\end{equation}
where $q_0 \in \R$ and 
\begin{align}\nonumber
  q_1(x) \,&=\, \frac{1}{2\pi}\int_{\R^2} \chi(x-y) \frac{(x-y)^\perp}{
  |x-y|^2}\cdot u(y)\omega(y)\dd y~, \\ \label{qidef}
  q_2(x) \,&=\, \frac{1}{4\pi}\sum_{k,\ell=1}^2\int_{\R^2} M_{k\ell}(x-y)
  u_k(y)u_\ell(y)\dd y~, \\ \nonumber
  q_3(x,x_0) \,&=\, \frac{1}{2\pi}\sum_{k,\ell=1}^2\int_{\R^2} \Bigl\{
  \chi^c(x-y)K_{k\ell}(x-y) - \chi^c(x_0-y)K_{k\ell}(x_0-y)\Bigr\}
  u_k(y)u_\ell(y)\dd y~. 
\end{align}
Here the following notations have been used\: $\chi \in C_3^\infty(\R^2)$ 
is a cut-off function which is equal to $1$ on a neighborhood of the 
origin, $\chi^c = 1 - \chi$, and
\begin{equation}\label{MKdef}
  M_{k\ell}(z) \,=\, \frac{2z_k \partial_\ell\chi(z) - \delta_{k\ell}
  (z_1\partial_1\chi(z) + z_2\partial_2\chi(z))}{|z|^2}~, \qquad 
  K_{k\ell}(z) \,=\, \frac{2z_kz_\ell - |z|^2\delta_{k\ell}}{|z|^4}~.
\end{equation}
\end{lemma}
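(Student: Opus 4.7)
The plan is to define $\tilde q(x) := q_1(x) + q_2(x) + q_3(x,x_0)$ and to show that $q - \tilde q$ is a constant (almost everywhere); setting $q_0 := q - \tilde q$ then yields \eqref{qrep}. Since the hypothesis $q \in \BMO(\R^2)$ determines $q$ up to an additive constant via \eqref{qeq} (the Liouville-type argument recalled after Lemma~\ref{plem}), the task reduces to verifying (a) that $\tilde q$ is well-defined with at most logarithmic growth, and (b) that $-\Delta\tilde q = \div(u^\perp\omega)$ in $\DD'(\R^2)$.

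For (a), the kernel $\chi(z)z^\perp/|z|^2$ of $q_1$ lies in $L^1(\R^2)$ (compactly supported with an integrable $1/|z|$ singularity at the origin), so $q_1 \in L^\infty$ with $\|q_1\|_{L^\infty}\le C\|u\|_{L^\infty}\|\omega\|_{L^\infty}$. Each $M_{k\ell}$ is bounded and supported on $\mathrm{supp}\,\nabla\chi$, a compact set bounded away from the origin, so $q_2 \in L^\infty$ with $\|q_2\|_{L^\infty}\le C\|u\|_{L^\infty}^2$. For $q_3$, I would split the $y$-integral at $|y| = 2(|x|+|x_0|)+10$: on the bounded region both $\chi^c(x{-}\cdot)K_{k\ell}(x{-}\cdot)$ and $\chi^c(x_0{-}\cdot)K_{k\ell}(x_0{-}\cdot)$ are bounded (the $\chi^c$ cutoff removes the $1/|z|^2$ singularity), whereas on the unbounded region the mean-value bound $|K_{k\ell}(x-y) - K_{k\ell}(x_0-y)|\le C|x-x_0|/|y|^3$ ensures absolute convergence. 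The same split yields $|q_3(x,x_0)|\le C\|u\|_{L^\infty}^2\log(2 + |x-x_0|)$, so $\tilde q$ has at most logarithmic growth.

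For (b), the cleanest derivation is first carried out for smooth divergence-free $u$ with rapid decay. Writing $q = N \ast \div(u^\perp\omega)$ with $N(z) = -\frac{1}{2\pi}\log|z|$ and integrating by parts once gives the formula
\[
  q(x) \,=\, \frac{1}{2\pi}\int_{\R^2}\frac{(x-y)^\perp}{|x-y|^2}\cdot u(y)\omega(y)\dd y,
\]
which is absolutely convergent in the Schwartz case. Splitting $1 = \chi(x{-}y) + \chi^c(x{-}y)$ extracts $q_1$; for the far-field piece I substitute $\omega = \partial_1 u_2 - \partial_2 u_1$ and integrate by parts in $y$, using the bilinear identities $u_1\omega = \partial_1(u_1 u_2) + \frac{1}{2}\partial_2(u_2^2 - u_1^2)$ and $u_2\omega = \frac{1}{2}\partial_1(u_2^2 - u_1^2) - \partial_2(u_1 u_2)$ (both consequences of $\div u = 0$). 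The derivatives then land on $\chi^c(z)z^\perp/|z|^2$; computing $\partial_j(\chi^c(z)z_i^\perp/|z|^2)$ explicitly and using the cancellations $M_{11}+M_{22}=0$ and $K_{11}+K_{22}=0$, the $\nabla\chi$-supported contributions assemble into $q_2$ (matching \eqref{MKdef}), and the $\chi^c$-supported contributions assemble into $\frac{1}{2\pi}\sum_{k,\ell}\int\chi^c(x{-}y)K_{k\ell}(x{-}y)u_k u_\ell\dd y$. Subtracting the same quantity evaluated at $x_0$ rewrites this last piece as $q_3(x,x_0)$ and absorbs a constant into $q_0$.

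The main obstacle is that for general $u\in X$ with only $\omega\in L^\infty(\R^2)$, the intermediate integrals above are not absolutely convergent and $\omega$ has no classical derivatives. Two natural routes close the gap. In the direct route, I test the desired identity against $\varphi \in C_c^\infty(\R^2)$: Fubini and integration by parts transfer all derivatives onto $\varphi$, after which the same algebraic cancellations produce $\int \tilde q(-\Delta\varphi)\dd x = -\int (u^\perp\omega)\cdot\nabla\varphi\dd x$, i.e.~$-\Delta\tilde q = \div(u^\perp\omega)$ distributionally. Alternatively, one regularizes $\omega$ by a compactly supported mollification $\omega_n$ (with prescribed vanishing moments so that $u_n$ reconstructed from \eqref{BS3} decays rapidly), applies the Schwartz-case identity to $(u_n,\omega_n)$, and passes to the limit using dominated convergence on each of the three absolutely convergent integrals. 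Combined with the Liouville step, this gives \eqref{qrep}.
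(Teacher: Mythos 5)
Your proposal is correct and, in its core, coincides with the paper's proof: the same derivation in the localized case (Newton potential for \eqref{qeq}, one integration by parts to reach the Biot--Savart-type integral, the splitting $1=\chi+\chi^c$, the bilinear identities $u_1\omega = \partial_1(u_1u_2)+\frac12\partial_2(u_2^2-u_1^2)$, $u_2\omega = -\partial_2(u_1u_2)+\frac12\partial_1(u_2^2-u_1^2)$, and a second integration by parts producing $q_2$ and $q_3$), the same boundedness estimates for $q_1,q_2$, the same mean-value bound of type \eqref{Kest} giving absolute convergence and logarithmic growth of $q_3$, and the same Liouville step (harmonic difference with sublinear growth is constant). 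The only genuine divergence is how the distributional identity $-\Delta\tilde q = \div(u^\perp\omega)$ is verified for general $u\in X$ with $\omega\in L^\infty$: the paper approximates $u$ itself by compactly supported divergence-free fields $u_n$ constructed with Bogovskii's operator, so that the corresponding pressures satisfy the equation by construction and one passes to the limit; your first route (testing against $\varphi\in C_c^\infty$ and transferring all derivatives onto $\varphi$) is a legitimate alternative, though it forces you to redo the algebraic cancellations at the distributional level rather than inheriting them from the smooth case. Your second route, mollifying and truncating $\omega$ with vanishing moments and reconstructing $u_n$ via \eqref{BS3}, is the weak point of the proposal: when $\omega$ is non-decaying, truncation plus moment corrections changes the velocity nonlocally, and you have not argued that $u_n\to u$ locally uniformly with uniform $L^\infty$ bounds, which is what dominated convergence in the three integrals of \eqref{qidef} actually requires (the integrands involve $u$, not only $\omega$). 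The paper's choice of approximating the velocity field directly, in the divergence-free class, is precisely what sidesteps this difficulty, so if you pursue the approximation route you should imitate that rather than regularizing the vorticity.
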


\begin{proof}
We first explain how the formulas \eqref{qidef} are obtained. Assume
for simplicity that $\omega$ has compact support in $\R^2$. In that
case, using the fundamental solution of the Laplace operator in $\R^2$
(see Section~\ref{A3}) and integrating by parts, we obtain the unique
solution of \eqref{qeq} which decays to zero at infinity\:
\begin{equation}\label{qfirst}
  q(x) \,=\, -\frac{1}{2\pi}\int_{\R^2} \log|x-y| \div(u^\perp(y)
  \omega(y))\dd y \,=\, \frac{1}{2\pi}\int_{\R^2} \frac{(x-y)^\perp}{
  |x-y|^2}\cdot u(y)\omega(y)\dd y~.
\end{equation}
Our goal is to transform the integral in the right-hand side of 
\eqref{qfirst} into an expression that makes sense even if
$\omega$ is not localized. To do that, we use the partition of 
unity $1 = \chi + \chi^c$ and we decompose $q(x) = q_1(x) + 
\tilde q(x)$, where $q_1$ is given by \eqref{qidef} and 
\begin{equation}\label{qsecond}
  \tilde q(x) \,=\, \frac{1}{2\pi}\int_{\R^2} \chi^c(x-y) 
  \frac{(x-y)^\perp}{|x-y|^2}\cdot u(y)\omega(y)\dd y~. 
\end{equation}
We next invoke the identities
\[
  u_1 \omega \,=\, \partial_1 (u_1 u_2) + \half \partial_2(u_2^2 
  - u_1^2)~, \qquad u_2 \omega \,=\, -\partial_2 (u_1 u_2) 
  + \half \partial_1(u_2^2 - u_1^2)~, 
\]
which allow us to integrate by parts in \eqref{qsecond}. This 
gives two different terms, according to whether the derivative 
acts on the cut-off or on the Biot-Savart kernel. After careful 
calculations, we obtain decomposition $\tilde q(x) = q_2(x) + 
q_3^*(x)$, where $q_2$ is as in \eqref{qidef} and 
\[
  q_3^*(x) \,=\, \frac{1}{2\pi}\sum_{k,\ell=1}^2\int_{\R^2} \chi^c(x-y)
  K_{k\ell}(x-y)u_k(y)u_\ell(y)\dd y~. 
\]
Summarizing, we have $q(x) = q_1(x) + q_2(x) + q_3^*(x)$ when 
$\omega$ is localized, which gives \eqref{qrep} with 
$q_0 = q_3^*(x_0)$ since $q_3(x,x_0) = q_3^*(x) -q_3^*(x_0)$. 

We now consider the general case where $u$ and $\omega$ are 
only supposed to be bounded. Under these assumptions the 
functions $q_1$, $q_2$ defined by \eqref{qidef} are clearly 
continuous and bounded. On the other hand, using the estimate
\begin{equation}\label{Kest}
  |K_{k\ell}(x-y) - K_{k\ell}(x_0-y)| \,\le\, C\biggl(\frac{|x-x_0|}%
  {|x-y|^2 |x_0-y|} + \frac{|x-x_0|}{|x-y|\,|x_0-y|^2}\biggl)~,
\end{equation}
which is obtained as in \eqref{nablaFest}, it is straightforward to
verify that the integral defining $q_3(x,x_0)$ in \eqref{qidef} is
absolutely convergent for any pair of points $x,x_0 \in \R^2$, because
the integrand is bounded and decays to zero like $|y|^{-3}$ as $|y|
\to \infty$. In fact, proceeding as in the proof of
Proposition~\ref{BSultim}, one can show that $q_3(x,x_0)$ is a
continuous function of $x$ which grows at most logarithmically as $|x|
\to \infty$. Now, if we define $\bar q(x) = q_1(x) + q_2(x) +
q_3(x,x_0)$, then $\bar q$ satisfies the elliptic equation
\eqref{qeq}. A convenient way to verify that is to approximate $u$ by
an sequence of compactly supported divergence free vector fields $u_n$
(this can be done using Bogovskii's operator, see \cite{Ga}). The
corresponding pressure $\bar q_n$ satisfies $-\Delta \bar q_n =
\div(u_n^\perp\omega_n)$ by construction, where $\omega_n = \curl
u_n$, and taking the limit $n \to \infty$ we obtain the desired
property for the limit $\bar q$. Finally, if $q \in \BMO(\R^2)$ is any
other solution of \eqref{qeq}, it follows that $q-\bar q$ is a
harmonic function on $\R^2$ with sublinear growth at infinity, hence
$q-\bar q = q_0$ for some $q_0 \in \R$.
\end{proof}

\subsection{A few elementary tools}
\label{A3}

We collect here, for easy reference, a few elementary definitions
and results that are used several times in these notes. 

\medskip\noindent{\bf I. Fourier transforms.} 
We use the following conventions for Fourier transforms on $\R^2$. 
Let $\SS(\R^2)$ denote the (Schwartz) space of all smooth and rapidly 
decreasing functions $f : \R^2 \to \C$, see \cite{RS1}. If 
$f \in \SS(\R^2)$ we set
\begin{equation}\label{FFdef}
  \hat f(\xi) \,=\, \int_{\R^2} f(x) \e^{-i\xi\cdot x}\dd x~, 
  \qquad 
  f(x) \,=\, \frac{1}{2\pi} \int_{\R^2} \hat f(\xi) \e^{i\xi\cdot x}
  \dd \xi~, 
\end{equation}
for all $x \in \R^2$ and $\xi \in \R^2$. We also denote $\hat f = 
\FF f$ and $f = \FF^{-1} f$. According to \eqref{FFdef}, we have 
for any $f \in \SS(\R^2)$\:
\[
  \FF(\nabla_x f) \,=\, i\xi(\FF f)~, \qquad \hbox{and}\quad
  \FF(xf) \,=\, i\nabla_\xi (\FF f)~.
\]
The Fourier transform $\FF$ and its inverse $\FF^{-1}$ are linear
isomorphisms on $\SS(\R^2)$, and can be extended to linear
isomorphisms on the dual space $\SS'(\R^2)$, which is the space of
tempered distributions on $\R^2$ \cite{RS1,RS2}. For instance, 
if $\delta_0$ denotes the Dirac measure located at the origin, 
we have $(\FF \delta_0)(\xi) = 1$ for all $\xi \in \R^2$. 

\medskip\noindent{\bf II. Young's inequality.} 
If $f,g \in \SS(\R^2)$, we define the convolution product 
$h = f*g \in \SS(\R^2)$ by the formula
\begin{equation}\label{convdef}
  h(x) \,=\, \int_{\R^2} f(x-y)g(y)\dd y \,=\, 
  \int_{\R^2} g(x-y)f(y)\dd y~, \qquad x \in \R^2~.
\end{equation}
In Fourier space, we then have $\hat h(\xi) = \hat f(\xi)\hat g(\xi)$ 
for all $\xi \in \R^2$, so that $\FF(f*g) = (\FF f)(\FF g)$. 
Moreover, for all exponents $p,q,r \in [1,\infty]$ satisfying
$\frac1p + \frac1q =1 + \frac1r$, we have Young's inequality
\begin{equation}\label{Youngdef}
  \|h\|_{L^r(\R^2)} \,=\, \|f*g\|_{L^r(\R^2)} \,\le\, \|f\|_{L^p(\R^2)} 
  \|g\|_{L^q(\R^2)}~. 
\end{equation}
More generally, if $f \in L^p(\R^2)$ and $g \in L^q(\R^2)$, one 
can show that the integral in \eqref{convdef} converges for almost 
every $x \in \R^2$ and defines a function $h \in L^r(\R^2)$ 
satisfying \eqref{Youngdef}. 

\medskip\noindent{\bf III. Fundamental solutions.} The Fourier
transform can be used to compute fundamental solutions of 
partial differential operators with constant coefficients. 
Two particular examples play an important role in these notes. 
First, the fundamental solution of the Poisson equation
$\Delta \Phi = \delta_0$ in $\R^2$ is
\[
  \Phi(x) \,=\, \frac{1}{2\pi}\,\log|x|~, \qquad 
  x \in \R^2\setminus\{0\}~,
\]
see \cite{Ev,LL}. It follows that $u = \Phi * \rho$ is the solution 
of the Poisson equation $\Delta u = \rho$ for any $\rho \in \SS(\R^2)$. 
Similarly, the vector field
\[
  V(x) \,=\, \frac{1}{2\pi}\,\frac{x^\perp}{|x|^2}~, 
  \qquad x \in \R^2\setminus\{0\}~,
\]
satisfies $\div V = 0$ and $\curl V \equiv \partial_1 V_2 - 
\partial_2 V_1 = \delta_0$. Thus, if $u = V * \omega$ for some $\omega
\in \SS(\R^2)$, we have $\div u = 0$ and $\curl u = \omega$. The
vector field $V = \nabla^\perp \Phi$ is therefore the fundamental 
solution associated to the Biot-Savart law.

\medskip\noindent{\bf IV. Gronwall's lemma.} 
There exist many versions of Gronwall's lemma, but the following
one is sufficient for our purposes. 

\begin{lemma}\label{Gronwall}
Let $T > 0$, $a \ge 0$, and assume that $f,g,b : [0,T] \to 
\R_+$ are continuous functions satisfying
\begin{equation}\label{G1}
  f(t) + \int_0^t g(s)\dd s \,\le\, a + \int_0^t b(s)f(s)\dd s~,
  \qquad 0 \le t \le T~.
\end{equation}
Then
\begin{equation}\label{G2}
  f(t) + \int_0^t g(s)\dd s \,\le\, a\exp\Bigl(\int_0^t b(s)\dd s
  \Bigr)~, \qquad  0 \le t \le T~.
\end{equation}
\end{lemma}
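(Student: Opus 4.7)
The plan is to convert \eqref{G1} into a linear differential inequality for an auxiliary function and then integrate that inequality directly. Define
\[
  F(t) \,=\, a + \int_0^t b(s) f(s) \dd s~, \qquad 0 \le t \le T~,
\]
which, by the continuity of $b$ and $f$, is continuously differentiable with $F'(t) = b(t) f(t)$; since $b, f \ge 0$, the function $F$ is also nondecreasing. Hypothesis \eqref{G1} is precisely the statement that $f(t) + \int_0^t g(s)\dd s \le F(t)$, so in particular $f(t) \le F(t)$ for every $t \in [0,T]$.

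Feeding this pointwise bound into the expression for $F'$ yields the scalar linear differential inequality $F'(t) \le b(t) F(t)$. Multiplying by the integrating factor $E(t) = \exp\bigl(-\int_0^t b(s)\dd s\bigr) > 0$ rewrites this as $\tfrac{\D}{\D t}(F E) \le 0$, so integrating from $0$ to $t$ and using $F(0) = a$, $E(0) = 1$ gives
\[
  F(t) \,\le\, a \exp\left(\int_0^t b(s)\dd s\right)~.
\]
Combined with the inequality $f(t) + \int_0^t g(s)\dd s \le F(t)$ already noted, this is exactly \eqref{G2}.

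There is no genuine obstacle here, but two minor points merit attention. One must use the sign conventions at the right places: the nonnegativity of $g$ is what allows \eqref{G1} to be rephrased as $f(t) + \int_0^t g(s)\dd s \le F(t)$, so that the $g$-term inherits the final bound for free; and the nonnegativity of $b$ is what ensures that the integrating-factor step preserves the inequality rather than reversing it. If one preferred to avoid differentiating a quantity built from $f$ itself, an alternative would be to iterate the scalar inequality $f(t) \le a + \int_0^t b(s) f(s)\dd s$, sum the resulting Neumann-type series to recover the exponential bound for $f$, and then re-inject the result into \eqref{G1} to pick up the $g$-contribution.
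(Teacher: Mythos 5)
Your proof is correct and follows essentially the same route as the paper: both define the auxiliary function built from $\int_0^t b(s)f(s)\dd s$, derive a linear differential inequality for it from \eqref{G1}, integrate, and re-insert the result into \eqref{G1} to recover the $g$-term. The only (cosmetic) difference is that by absorbing the constant $a$ into your $F$ you obtain the homogeneous inequality $F' \le bF$ and integrate with an explicit integrating factor, whereas the paper keeps $F(t)=\int_0^t b(s)f(s)\dd s$ and integrates the inhomogeneous inequality $F' \le ab + bF$.
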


\begin{proof}
Let $F(t) = \int_0^t b(s)f(s)\dd s$. Then $F$ is continuously 
differentiable on $[0,T]$ and satisfies, in view of 
\eqref{G1}, 
\[
  F'(t) = b(t)f(t) \,\le\, ab(t) + b(t)F(t)~, \qquad 
  0 \le t \le T~.
\]
Integrating that differential inequality and observing 
that $F(0) = 0$, we obtain the bound
\[
  F(t) \,\le\, a\exp\Bigl(\int_0^t b(s)\dd s\Bigr) - a~, 
  \qquad  0 \le t \le T~,
\]
which can be inserted in the right-hand side of \eqref{G1} 
to give \eqref{G2}. 
\end{proof}

\end{document}